\crefname{hypothesis}{Hypothesis}{Hypotheses}
\title{Uniform far-field asymptotics of the two-layered Green function in 2D and application
to wave scattering in a two-layered medium}
\author{Long Li \thanks{Academy of Mathematics and Systems Science, Chinese Academy of Sciences, Beijing {100190}, China
  (\email{longli@amss.ac.cn}).}
\and Jiansheng Yang \thanks{LMAM, School of Mathematical Sciences, Peking University, Beijing 100871, China
  (\email{jsyang@pku.edu.cn}).}
\and Bo Zhang \thanks{LSEC and Academy of Mathematics and Systems Science, Chinese Academy of
Sciences, Beijing 100190, China and School of Mathematical Sciences, University of Chinese
Academy of Sciences, Beijing 100049, China (\email{b.zhang@amt.ac.cn}).}
\and Haiwen Zhang \thanks{Corresponding author. Academy of Mathematics and Systems Science, Chinese Academy of Sciences, Beijing {100190}, China
  (\email{zhanghaiwen@amss.ac.cn})}.}
\newcommand{\Rb}{{\mathbb R}}
\newcommand{\RR}{{\mathcal{R}}}
\newcommand{\Tc}{{\mathcal T}}
\newcommand{\Cb}{{\mathbb C}}
\newcommand{\Sp}{{\mathbb S}}
\newcommand{\ds}{\displaystyle}
\newcommand{\be}{\begin{eqnarray}}
\newcommand{\ben}{\begin{eqnarray*}}
\newcommand{\en}{\end{eqnarray}}
\newcommand{\enn}{\end{eqnarray*}}
\newcommand{\ba}{\backslash}
\newcommand{\pa}{\partial}
\newcommand{\ov}{\overline}
\newcommand{\Grad}{{\rm Grad\,}}
\newcommand{\Ima}{{\rm Im\,}}
\newcommand{\ka}{\kappa}
\newcommand{\I}{{\rm Im}}
\newcommand{\Rt}{{\rm Re}}
\newcommand{\Rmnum}[1]{\expandafter\@slowromancap\romannumeral #1@}
\DeclareMathOperator{\sech}{sech}
\DeclareMathOperator{\sgn}{sgn}
\definecolor{hw}{rgb}{0,0,0}
\definecolor{hw-a}{rgb}{0,0,0}
\begin{document}

\maketitle

% REQUIRED
\begin{abstract}
In this paper, we establish new results for the uniform far-field asymptotics of the two-layered Green function (together with its derivatives) in 2D in the frequency domain. To the best of our knowledge, our results are the sharpest yet obtained. The steepest descent method plays an important role in the proofs of our results.
Further, as an application of our new results, we derive the uniform far-field asymptotics of the scattered
field to the acoustic scattering problem by buried obstacles in a two-layered medium with a locally rough interface.
The results obtained in this paper provide a theoretical foundation for our recent work, where direct
imaging methods have been developed to image the locally rough interface
from phaseless total-field data or phased far-field data at a fixed frequency.
It is believed that the results obtained in this paper will also be useful on its own right.
\end{abstract}

% REQUIRED
\begin{keywords}
two-layered Green
function, uniform far-field asymptotics, steepest descent method, acoustic scattering, two-layered
medium.
\end{keywords}

% REQUIRED
\begin{AMS}
35J05, 35J08, 35B40.
\end{AMS}

\section{Introduction}\label{sec:1}
This paper is concerned with the uniform far-field asymptotics of two-dimensional two-layered Green function
(together with its derivatives) in the frequency domain and the application in the two-layered medium
scattering problems. Two-layered Green functions have attracted much attention in many fields such as radar,
remote sensing, ocean acoustics, exploration geophysics and outdoor sound propagation.

Two-layered Green functions play an important role in both
direct and inverse scattering problems in two-layered media.
Typically, the scattered waves for the scattering problems in two-layered media
can be formulated by boundary integral equations associated with
the two-layered Green functions (see, e.g., \cite{BHY,Lpj,Car17}).
Therefore, efficient and accurate calculations of the two-layered Green functions
have been extensively studied (see, e.g., \cite{B84,bg_2} for the method using high-frequency asymptotics, \cite{CY,Cui,CP} for the method using contour deformations and \cite{Lai} for the method using a new hybrid
integral representation).
Furthermore, with the benefit of high accuracy algorithms and special natures of the two-layered Green functions,
there are many works concerning the efficient numerical methods for the direct scattering problems in two-layered
media (see, e.g., \cite{BHY,Lpj} for the approach combining the boundary integral equations and
variational methods, \cite{CZ} for the perfectly matched layer method and {\color{hw-a}\cite{Cho,WZC19}} for the fast multipole method).
Moreover, based on the properties of the two-layered Green functions,
many fast and robust inversion algorithms have been developed for recovering the scatterers buried in
two-layered media from a knowledge of the scattered-field data or the far-field data, such as the MUSIC
method \cite{AIL05,WP}, the linear sampling method \cite{Coyle_2000},
the sampling type method \cite{Li_2015,LYZZ} and the migration algorithm \cite{liuet}.

Let $\mathbb R^2_{\pm}:=\{{(x_1,x_2)}\in\mathbb R^2:x_2\gtrless 0\}$, $\Gamma_0:=\{{(x_1,0)}:x_1\in\mathbb R\}$ and $\mathbb{S}^1_\pm:=\{x\in \Rb^2_\pm :\vert x \vert=1\}$.
For any point $x\in\Rb^2$ with $\vert x \vert\neq0$, let
$\hat{x}:=x/\vert x \vert$ denote the direction of $x$.
Then given any source point $y\in \Rb^2_+\cup\Rb^2_-$, the two-dimensional two-layered Green function $G(x,y)$
in the frequency domain is the solution to the following scattering problem
\begin{align}
\ds\Delta_x G(x,y) +{k}^2_\pm G(x,y)&=-\delta(x,y)&& \text{in}\quad\Rb^2_\pm, \label{eq:0.3} \\
\ds[G(x,y)]=0,\;\;\left[{\partial G(x,y)}/{\partial\nu(x)}\right]&=0&&\text{on}\quad\Gamma_0,\label{eq60}\\
\ds\lim_{\vert x \vert\rightarrow +\infty}\sqrt{\vert x \vert}\left(\frac{\partial G(x,y)}{\partial \vert x \vert}-ik_\pm G(x,y)\right)&=0& & \textrm{uniformly for all}~ \hat{x} \in \mathbb{S}^1_\pm, \label{eq:0.4}
\end{align}
where $\delta(x,y)$ denotes the Dirac delta distribution, $k_\pm = \omega/c_{\pm} > 0$ are two different
wave numbers in $\mathbb R^2_{\pm}$, respectively, $\nu$ denotes the unit normal on $\Gamma_0$ pointing
into $\Rb^2_+$ and $[\cdot]$ denotes the jump across the interface $\Gamma_0$. Here, (\ref{eq:0.4}) is called
the Sommerfeld radiation condition, $\omega$ is the wave frequency and $c_\pm$ are the wave speeds in the
half-spaces $\Rb^2_\pm$, respectively. For the derivation of the explicit form of $G(x,y)$,
we refer to \cite{Cheney_1995, AIL05, Lpj, Car17}.

It is well-known from \cite{DK13} that the solution to the scattering problem (\ref{eq:0.3})--(\ref{eq:0.4})
in the limiting case $k:=k_+=k_-$ (i.e. the fundamental solution of the homogeneous Helmholtz equation
$\Delta w + k^2 w = 0$ in $\Rb^2$ with wave number $k>0$)
is given by $\Phi(x,y):=\frac{i}{4}H^{(1)}_0(k|x-y|)$ with $H^{(1)}_0$ denoting the Hankel function of the
first kind of order $0$ and $\Phi(x,y)$ satisfies the far-field asymptotic behavior: for any $y\in\Rb^2$,
\be\label{NNe6}
\Phi(x,y)=\frac{i}{4}H^{(1)}_0(k|x-y|)=\frac{e^{ik|x|}}{\sqrt{|x|}}\frac{e^{i\frac{\pi}4}}{\sqrt{8\pi k}}
\left(e^{-ik \hat x\cdot y}+O\left(|x|^{-1}\right)\right),\\ \nonumber
\qquad\qquad\qquad\qquad\qquad\qquad |x|\rightarrow+\infty, \quad x\in\Rb^2,
\en
uniformly for all angles $\theta_{\hat{x}}\in\left[\left.0,2\pi\right)\right.$.
Here, $\theta_{\hat{x}}\in\left[\left.0,2\pi\right)\right.$ denotes the angle of the observation
direction $\hat{x}\in\mathbb{S}^1:=\{x\in\Rb^2:|x|=1\}$.
However, to the best of our knowledge, there are only a few works concerning the far-field asymptotic
properties of the two-layered Green function $G(x,y)$,
which is much more challenging since the explicit form of $G(x,y)$ is more complicated due to
the presence of the two-layered medium.
In \cite{AIL05}, it was proved that for any $y\in\Rb^2_-$, $G(x,y)$ has the far-field asymptotic behavior:
\be\label{eq44}
G(x,y) = \frac{e^{ik_{+}|x|}}{\sqrt{|x|}}G^{\infty}(\hat x,y) + G_{Res}(x,y),
\quad |x|\rightarrow+\infty,\quad x\in\Rb^2_+,
\en
where $G^{\infty}(\hat x, y)$ is the so-called far-field pattern of $G(x,y)$ in $x\in\Rb^2_+$
(see \cite[formula (9)]{AIL05}) and the remainder term $G_{Res}(x,y)=o(|x|^{-1/2})$
as $|x|\rightarrow+\infty$
for any angle $\theta_{\hat{x}}\in(0,\pi)$; see \cite[formula (2.59)]{Car17} or (\ref{eq:t7}) below for
the complete definition of $G^{\infty}(\hat x,y)$ with $\hat{x}\in\Sp^1_+$ and $y\in\Rb^2_+\cup\Rb^2_-$.
Moreover, it was deduced in \cite{Car17} (see also Appendix B of the supplementary materials in \cite{BL16})
that for any $y\in\Rb^2_+\cup\Rb^2_-$, $G(x,y)$ has the far-field asymptotic behavior (\ref{eq44})
with $G_{Res}(x,y)$ satisfying that
\be\label{eq22}
G_{Res}(x,y)=O(\vert x \vert^{-3/2})\quad \textrm{as}~ |x|\rightarrow+\infty
\en
{\color{hw}for all angles $\theta_{\hat{x}}\in(0,\pi)$ in the case $k_+<k_-$ and for all angles
$\theta_{\hat{x}}\in(0,\pi)\ba\{\theta_c,\pi-\theta_c\}$ in the case $k_+>k_-$ with $\theta_c:=\arccos(k_-/k_+)\in(0,\pi/2)$ (see \eqref{eq43} below).
Note that $\theta_c$ and $\pi-\theta_c$ are called the critical angles of $G(x,y)$ in the case $k_+>k_-$;}
see Remarks \ref{re3} and \ref{re4} for discussions on the critical angles.
Further, it can be deduced from \cite[formula (2.27) and pages 31--32]{Car17} that
for any $y\in\Rb^2_+\cup\Rb^2_-$,
$G(x,y)=O(|x|^{-3/2})$ as $|x|\rightarrow+\infty$ for the angle $\theta_{\hat{x}}\in\{0,\pi\}$.
{\color{hw}
Note that \cite{Car17} only considered the pointwise (rather than uniform) estimates of the far-field
asymptotics of $G(x,y)$ with respect to the angle $\theta_{\hat{x}}$
and these pointwise estimates can be dealt with
in the same way for both cases $k_+<k_-$ and $k_+>k_-$, as seen in \cite{Car17}.
Moreover, it should be pointed out that for the case $k_+>k_-$, the far-field asymptotic estimates of
$G(x,y)$ obtained in \cite{Car17} actually do not hold uniformly for the angles $\theta_{\hat{x}}$
in the vicinity of the critical angles $\theta_c$ and $\pi-\theta_c$;
see Section \ref{sec:2} below for further discussions.}

In this paper, we establish new results for the uniform far-field asymptotic properties of $G(x,y)$
for all angles $\theta_{\hat{x}}\in(0,\pi)\cup(\pi,2\pi)$
{\color{hw-a}
and for both cases $k_+<k_-$ and $k_+>k_-$}.
The main contributions of our results are twofold.
First, our results show that for any $y\in\Rb^2_+\cup\Rb^2_-$,
$G(x,y)$ has the far-field asymptotic properties (\ref{eq44}) and
\be\label{eq50}
G(x,y)=\frac{e^{ik_{-}\vert x \vert}}{\sqrt{\vert x \vert}}G^{\infty}(\hat x, y) + G_{Res}(x,y),
\quad \vert x \vert\rightarrow+\infty,\quad x\in\Rb^2_-,
\en
where $G^{\infty}(\hat x,y)$ in (\ref{eq50}) is the far-field pattern of $G(x,y)$ in
$x\in\Rb^2_-$ (see (\ref{eq:3.1}) below) and the remainder term $G_{Res}(x,y)$ in (\ref{eq44})
and (\ref{eq50}) satisfies that
\be\label{eq53}
G_{Res}(x,y)=O(|x|^{-3/4}),\quad |x|\rightarrow+\infty,\quad x\in\Rb^2_+\cup\Rb^2_-,
\en
uniformly for all angles $\theta_{\hat{x}}\in(0,\pi)\cup(\pi,2\pi)$
(including the critical angles) and all $y\in B^+_{R_0}\cup B^-_{R_0}$ with arbitrarily
fixed $R_0>0$ (see Remark \ref{re5}). Here, $B^\pm_{R_0}:=\{y\in\Rb^2_\pm:|y|< R_0\}$ for $R_0>0$.
We also prove that the uniform asymptotic property (\ref{eq53}) is essentially sharp for the
angles $\theta_{\hat{x}}$ lying in the vicinity of any one of the critical angles and that
the asymptotic property $G_{Res}(x,y)=O(|x|^{-3/2})$ does not hold for the critical angles
(see Remarks \ref{re3} and \ref{re4}).
Secondly, for all angles $\theta_{\hat{x}}\in(0,\pi)\cup(\pi,2\pi)$ except for the critical angles,
we obtain uniform upper bounds for the remainder term $G_{Res}(x,y)$ in (\ref{eq44}) and (\ref{eq50})
with large enough $|x|$. The uniform upper bounds obtained show that for any critical angle $\theta$,
$G_{Res}(x,y)$ in (\ref{eq44}) and (\ref{eq50}) satisfies that
\ben
G_{Res}(x,y)=O({{\left|\theta-\theta_{\hat x}\right|}^{-\frac 32}}|x|^{-\frac 32}),\quad
|x|\rightarrow+\infty,
\enn
uniformly for all angles $\theta_{\hat{x}}$ in a punctured neighborhood of $\theta$ and all
$y\in B^+_{R_0}\cup B^-_{R_0}$ with arbitrarily fixed $R_0>0$
(see Theorems \ref{NLe:2} and \ref{le:3.2}).
Moreover, similarly to our analysis for $G(x,y)$, we also derive uniform far-field asymptotic
properties for $\nabla_y G(x,y)$.
To the best of our knowledge, our results on the uniform far-field asymptotics of $G(x,y)$
and its derivatives are the sharpest yet obtained.
Note that the steepest descent method plays an important role in the proofs of our results
(see, e.g., \cite{B84,BH86,bg_2,CH95,D92,O70} for applications of the steepest descent method).
The crucial part of our proofs is the singularity analysis of the relevant integrals for $G(x,y)$
in the case when the angle $\theta_{\hat{x}}$ is very close to any one of the critical angles.
Further, as an application of our new results, we derive the uniform far-field asymptotics for
the solution to the acoustic scattering problem by buried obstacles in a two-layered medium
with a locally rough interface.

It is worth noting that this work is motivated by our recent work \cite{LYZZ2},
where we have considered the inverse acoustic scattering in a two-layered medium with
a locally rough interface and direct imaging methods have been proposed to numerically reconstruct
the locally rough interface from phaseless total-field data or phased far-field data.
In fact, the results obtained in the present paper provide a theoretical foundation for the
direct imaging methods given in \cite{LYZZ2}.

It should be remarked that some related works have been done for the asymptotics of relevant
Green functions. High-frequency asymptotic properties of the two-layered Green function in
two dimensions have been studied in \cite{B84}.
For the two-layered Green function in three dimensions, we refer to \cite{BLE1966,bg_2} for
the analysis of the high-frequency asymptotic properties, and \cite{BLE1966,bg_2,D92} for
the analysis of their far-field asymptotics.
Moreover, \cite{CH95} has derived the uniform far-field asymptotic expansion of
the Green function for the Helmholtz equation in a half-plane with an impedance boundary condition.

The remaining part of the paper is organized as follows.
Sections \ref{sec:2} and \ref{sec:3} are devoted to the theoretical analysis of
the uniform far-field asymptotic property of the two-layered Green function $G(x,y)$
for the cases $x\in\Rb^2_+$ and $x\in\Rb^2_-$, respectively.
In Section \ref{sec:4}, as an application of the results obtained in Sections \ref{sec:2} and \ref{sec:3},
we study the uniform far-field asymptotic property of the solution to the acoustic scattering problem
by buried obstacles in a two-layered medium with a locally rough interface.
Some concluding remarks are given in Section \ref{sec:5}.
We derive the formula (\ref{neq:4.2.33}) in Appendix \ref{sec:a}
and prove Lemmas \ref{Nle:1} and \ref{Le:3.2.1} in Appendix \ref{s3}.

\section{Uniform far-field asymptotic analysis of $G(x,y)$ with $x\in \Rb^2_+$} \label{sec:2}

This section is devoted to studying the uniform far-field asymptotic properties of $G(x,y)$
with $x\in\Rb^2_+$. To this end, we first introduce the following notations, which will be used
throughout the paper. Define $n:={k_{-}}/{k_{+}}$. Denote the angle $\theta_c$ by
\begin{align}\label{eq43}
\theta_c := \begin{cases}
\arccos (n) \in (0,\pi/2), \; & k_+>k_-,\\
\arccos (1/n)\in (0,\pi/2), \; & k_+<k_-.
\end{cases}
\end{align}
For any $x\in\Rb^2$ with $\vert x \vert\neq0$, let $x=(x_1,x_2)$
and $\hat{x}=x/\vert x \vert=(\cos\theta_{\hat{x}},\sin\theta_{\hat{x}})$ with $\theta_{\hat{x}}\in [0,2\pi)$.
For any $d\in\Sp^1:=\{d\in\mathbb{R}^2:\vert d\vert =1\}$, let $d=(d_1,d_2)=(\cos\theta_d,\sin\theta_d)$ with $\theta_d\in[0,2\pi)$. Define $\Sp^1_{\pm}:=\{x\in \Rb^2_\pm :\vert x \vert=1\}$. For any $y=(y_1,y_2)\in\mathbb{R}^2$,
let $y':=(y_1,-y_2)$. For any $R_0>0$, define $B^\pm_{R_0}:=\{y\in\mathbb{R}^2_\pm:\vert y \vert< R_0\}$.
Let $\Cb_0$, $\Cb_1$ and $\Cb_2$ be given by
\begin{align*}
  &\Cb_0: = \Cb \backslash \{z \in \Cb: \Ima(z) =0, \Rt(z) \le 0\},\\
  &\Cb_1: = \Cb \backslash \{z \in \Cb: \Ima(z) \geq 0, \Rt(z) = 0\},\\
  &\Cb_2: = \Cb \backslash \{z \in \Cb: \Ima(z) \leq 0, \Rt(z) = 0\},
\end{align*}
respectively. Then we introduce the functions $z^{\beta}$, $\mathcal S_1(z)$ and $\mathcal S_2(z)$
as follows.
\begin{enumerate}
  \item
  For $\beta\in\Rb$ and $z \in  \Cb_0$ with $z= \vert z\vert e^{i\theta_0}$ and $\theta_0\in (-\pi,\pi)$,
  define $z^{\beta}: = \vert z\vert^{\beta}e^{i\beta{\theta_0}}$.
  For $\beta>0$ and $z=0$, define $z^\beta:=0$.
  For simplicity, the function $z^{\beta}$ with $\beta = 1/2$ is also denoted by $\sqrt{z}$ as usual.
  \item
  For $z \in \Cb_1$ with $z= \vert z\vert e^{i\theta_1}$ and $\theta_1\in (-\frac{3\pi}2, \frac{\pi}2)$,
  let $\mathcal S_{1}(z): =\sqrt{\vert z\vert }e^{i{\theta_1}/2}$.
  For $z=0$ define $\mathcal S_{1}(z):=0$.
  \item
  For $z \in \Cb_2$ with $z= \vert z\vert e^{i\theta_2}$ and $\theta_2\in (-\frac{\pi}2, \frac{3\pi}2)$,
  let $\mathcal S_{2}(z): =\sqrt{\vert z\vert }e^{i{\theta_2}/2}$.
  For $z=0$ define $\mathcal S_{2}(z):=0$.
\end{enumerate}
Moreover, for $z\in\Cb$ and $a>0$ such that $z-a\in\Cb_1\cup\{0\}$ and $z+a\in\Cb_2\cup\{0\}$,
define $\mathcal S(z,a):=\mathcal S_{1}(z-a)\mathcal S_{2}(z+a)$ (note that
$\mathcal S(z,a) =0$ for $z=\pm a$). For $z\in\Cb$ and $a>0$ such that $z-a,z+a\in\Cb_2\cup\{0\}$,
define $\widetilde{\mathcal S}(z,a):=\mathcal S_{2}(z-a)\mathcal S_{2}(z+a)$.
{\color{hw}The branch cuts of $\mathcal S(z,a)$ and $\widetilde{\mathcal S}(z,a)$ with $a>0$
are depicted in Figure \ref{fig2}.}
For $z\in\{z\in\Cb:\Rt(z)=a,\I(z)\ge 0\}$ and $a>0$, it can be seen that
$\lim_{h\in\mathbb{R}, h\rightarrow +0} \mathcal{S}(z\pm h,a)$ exist, which are denoted
as $\mathcal{S}_{\pm}(z,a)$, respectively. It is clear that
\begin{align}\label{eq8}
\mathcal{S}_{+}(z,a)=-\mathcal{S}_{-}(z,a)=\widetilde{\mathcal{S}}(z,a)\quad
\text{for}~z\in \{z\in \Cb: \Rt(z) = a, \I(z)\ge 0\}.
\end{align}
For $c_1,c_2\in\Rb$ with $c_1<c_2$, define the strip
$L_{c_1,c_2}:=\left\{{z\in\mathbb C}:c_1<\I(z)<c_2\right\}$.
In particular, the strip $L_{-c, c}$ with $c>0$ is denoted as $L_{c}$.
For $\theta \in \mathbb{R}$, we define
\be\label{eq:0}
\mathcal R(\theta):=\frac{{i\sin\theta+\mathcal S(\cos\theta,n)}}{{i\sin\theta
-\mathcal S{(\cos\theta,n)}}},\quad \Tc(\theta):={\RR(\theta)}+1.
\en
Throughout this paper, the constants may be different at different places.
If not stated otherwise, for any $a,b\in\mathbb{R}\cup\{\pm\infty\}$ and any function $f$ defined
in the complex plane, $\int_a^b f(s)ds$ is always considered as an integral from $a$ to $b$ along the real axis.

\begin{figure}
\centering
\includegraphics[width=0.95\textwidth]{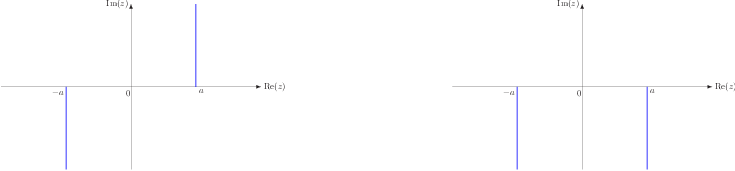}
\caption{{\color{hw}The blue lines are the branch cuts of $\mathcal S(z,a)$ (left) and
$\widetilde{\mathcal S}(z,a)$ (right) with $a>0$.}
}\label{fig2}
\end{figure}

Now we present the explicit formula for the two-layered Green function $G(x,y)$ with $x\in \Rb^2_+$.
For $x=(x_1,x_2)\in \Rb^2_+$ and $y=(y_1,y_2)\in  \Rb^{2}_+ \cup  \Rb^{2}_-$, $G(x,y)$ has the
following form (see, e.g., \cite[formula (2.27)]{Car17}
{\color{hw-a}and \cite[Appendix A]{Lpj}})
\begin{align} \label{e:1.2}
G(x,y) =\left\{\begin{aligned}
&\frac{i}{4}H^{(1)}_0(k_{+}\vert x- y\vert ) + G_{\mathcal R}(x,y),
&&x\in\Rb^2_+, &&y\in\Rb^2_+, \\
&G_{\mathcal T}(x,y),
&&x\in\Rb^2_+, &&y\in\Rb^2_-,
\end{aligned}\right.
\end{align}
where $G_{\mathcal R}(x,y)$ and $G_{\mathcal T}(x,y)$ are given by the so-called Sommerfeld integrals, that is,
\begin{align}
\label{eq4}
&G_{\mathcal R}(x,y): =  \frac{1}{4\pi}\int_{-\infty}^{+\infty}\frac{\mathcal S(\xi,{k_{+}})
-\mathcal S(\xi,{k_{-}})}{\mathcal S(\xi,{k_{+}})+\mathcal S(\xi,{k_{-}})}
\frac{e^{-\mathcal S(\xi,{k_{+}})\vert x_2+y_2\vert}}{\mathcal S(\xi,{k_{+}})}e^{i\xi(x_1-y_1)}d\xi,\nonumber\\
&\qquad\qquad\qquad\qquad\qquad\qquad\qquad\qquad\qquad\qquad\qquad\qquad
x\in\Rb^2_+,~y\in\Rb^2_+,\\ \label{eq5}
&G_{\mathcal T}(x,y): = \frac{1}{2\pi} \int_{-\infty}^{+\infty}\frac{{e}^{\mathcal S(\xi,{k_{-}})y_2
-\mathcal S(\xi,{k_{+}})x_2}}{\mathcal S(\xi,{k_{+}})+\mathcal S(\xi,{k_{-}})}e^{i\xi(x_1-y_1)}d\xi,
~x\in\Rb^2_+,~y\in\Rb^2_-.
\end{align}
Moreover, by introducing the variable $z=\xi/k_{+}$, (\ref{eq4}) and (\ref{eq5}) can be rewritten as
\begin{align}\label{eq:3.2.2}
&G_{\mathcal R}(x,y)=\frac{1}{4\pi}\int^{+\infty}_{-\infty}\frac{\mathcal S(z,1)
-\mathcal S(z,n)}{\mathcal S(z,1)+\mathcal S(z,n)}\frac{e^{-i k_{+}\vert y \vert(z\cos\theta_{\hat y}
-i\mathcal S(z,1)\sin\theta_{\hat y})}}{\mathcal S(z,1)}e^{i k_{+}\vert x \vert p(z,\theta_{\hat x})}dz,\\
&G_{\mathcal T}(x,y) = \frac{1}{2\pi}\int^{+\infty}_{-\infty} \frac{e^{-i k_{+}\vert y \vert
(z\cos\theta_{\hat y}+ i\mathcal S{(z,n)}\sin\theta_{\hat y})}}{\mathcal S{(z,1)}
+\mathcal S{(z,n)}}e^{i k_{+}\vert x \vert p(z,\theta_{\hat x})}dz,
\end{align}
where $x=\vert x \vert(\cos\theta_{\hat x},\sin\theta_{\hat x})$ with $\theta_{\hat x}\in (0,\pi)$,
$y = \vert y \vert(\cos\theta_{\hat y},\sin\theta_{\hat y})$ with $\theta_{\hat y}\in (0,\pi)\cup(\pi,2\pi)$
and $p(z,\theta_{\hat x}):= z\cos\theta_{\hat x}+ i\mathcal S{(z,1)}\sin\theta_{\hat x}$.

{\color{hw}
As mentioned in Section \ref{sec:1}, \cite{Car17} derived the pointwise estimates of the far-field asymptotics
of $G(x,y)$ for all angles $\theta_{\hat{x}}\in(0,\pi)$ in the case $k_+<k_-$ and for all angles
$\theta_{\hat{x}}\in(0,\pi)\ba\{\theta_c,\pi-\theta_c\}$ in the case $k_+>k_-$ (see Section 2.3.4
in \cite{Car17}).
These pointwise estimates were established in \cite{Car17} by using the steepest descent method and
were essentially based on the pointwise far-field asymptotic estimates of $G_{\mathcal R}(x,y)$
and $G_{\mathcal T}(x,y)$ given in \cite[(2.57a) and (2.57b)]{Car17}.}

In the following two subsections, we will study the uniform far-field asymptotic properties of
the two-layered Green function $G(x,y)$ for the case $k_+<k_-$ and for the case $k_+>k_-$, respectively.

{\color{hw}
For the case $k_+<k_-$, it is easy to prove that the far-field asymptotic estimates of $G_{\mathcal R}(x,y)$
and $G_{\mathcal T}(x,y)$ given in \cite[(2.57a) and (2.57b)]{Car17}
hold uniformly for all angles $\theta_{\hat{x}}\in(0,\pi)$ by using the same approach as given
in \cite[Section 2.3.4]{Car17}.
This is essentially due to the fact that in the case $k_+<k_-$, since $n=k_-/k_+>1$ then
the distances between the saddle point $\cos\theta_{\hat{x}}$ and the branch points $\pm n$ involved in
relevant integrals given in \cite[Section 2.3.4]{Car17} have a uniform positive lower bound for all $\theta_{\hat{x}}\in(0,\pi)$.
Therefore, one can easily obtain the uniform far-field asymptotic properties of $G(x,y)$ for all
angles $\theta_{\hat{x}}\in(0,\pi)$ in the case $k_+<k_-$; see Section \ref{sec:21} below.

However, for the case $k_+>k_-$,
the far-field asymptotic estimates of $G_{\mathcal R}(x,y)$ and $G_{\mathcal T}(x,y)$ given
in \cite[(2.57a) and (2.57b)]{Car17} do not hold uniformly for the angles $\theta_{\hat{x}}$
in the vicinity of the critical angles $\theta_c$ and $\pi-\theta_c$. In fact,
when the angle $\theta_{\hat{x}}$ is in the vicinity of the critical angle $\theta_c$, this
nonuniform property can be seen from the fact that the factor $|\sin(\theta_c-\theta)|^{-3/2}$
involved in \cite[formulas (2.54a) and (2.54b)]{Car17} will blow up if $\theta$ tends to $\theta_c$,
where the formulas (2.54a) and (2.54b) in \cite{Car17} are related to $G_{\mathcal R}(x,y)$ and
$G_{\mathcal T}(x,y)$, respectively, and where the notations $\theta$ and $\theta_c$ used
in \cite[formulas (2.54a) and (2.54b)]{Car17} correspond to $\theta_{\hat{x}}$ and $\theta_c$, respectively,
used in the present paper. Moreover, when the angle $\theta_{\hat{x}}$ is in the vicinity of the critical
angle $\pi-\theta_c$, this nonuniform property can be easily seen from the above discussion on
\cite[formulas (2.54a) and (2.54b)]{Car17} as well as a symmetry relation of $G(x,y)$ given in \eqref{eq48}.
Furthermore, it is worth noting that this nonuniform property is essentially due to the fact that,
in the case $k_+>k_-$, since $n=k_-/k_+<1$ then $\cos\theta_{\hat{x}}$ will be very close to $n$ (resp. $-n$)
when $\theta_{\hat{x}}$ approaches $\theta_c$ (resp. $\pi-\theta_c$), where $\cos\theta_{\hat{x}}$ and $\pm n$
are the saddle point and the branch points, respectively, as mentioned above.
Therefore, the method for deriving the far-field asymptotic estimates in \cite[(2.57a) and (2.57b)]{Car17}
does not work for the derivation of the uniform far-field asymptotics of $G(x,y)$ for all
angles $\theta_{\hat{x}}\in(0,\pi)$ in the case $k_+>k_-$.

In Remark \ref{re6} below, we give a detailed explanation on the main difficulties in deriving the
uniform far-field asymptotic estimates of $G(x,y)$ for all angles $\theta_{\hat{x}}\in(0,\pi)$
in the case $k_+>k_-$. We will also propose a method to address these difficulties after
Remark \ref{re6} in Section \ref{sec:22}.
%To address the difficulties, we give a technically involved singularity analysis of relevant integrals
%for $G(x,y)$ when the angle $\theta_{\hat{x}}$ is very close to $\theta_c$ or $\pi-\theta_c$.
%Based on this analysis, we obtain desirable results on the uniform far-field asymptotics
%of $G(x,y)$ for all angles $\theta_{\hat{x}}\in(0,\pi)$ in the case $k_+>k_-$;
%see Section \ref{sec:22} below for details.
}

\subsection{The case $k_{+}<k_{-}$}\label{sec:21}

In this case, we have the following theorem on the uniform far-field asymptotic properties of $G(x,y)$,
which is mainly based on the results in \cite{Car17}.

\begin{theorem}\label{NLe:4}
Assume that $k_+<k_-$ and let $R_0>0$ be an arbitrary fixed number. Suppose that
$y=(y_1,y_2)\in B^+_{R_0}\cup B^-_{R_0}$ and $x=\vert x \vert\hat x=\vert x \vert(\cos\theta_{\hat x},\sin\theta_{\hat x})\in\Rb^2_+$
with $\theta_{\hat{x}}\in(0,\pi)$, then we have the asymptotic behavior
\begin{align}
& G(x,y) = \frac{e^{ik_{+}\vert x \vert}}{\sqrt{\vert x \vert}}G^{\infty}(\hat x, y) + G_{Res}(x,y), \label{eq:3.2.1}\\
& \nabla_y G(x,y) =  \frac{e^{ik_{+}\vert x \vert}}{\sqrt{\vert x \vert}}H^{\infty}(\hat x,y ) +  H_{Res}(x,y), \label{eq:4.2.17}
\end{align}
where $G^{\infty}$, $H^{\infty}$ are defined by
\begin{align} \label{eq:t7}
G^{\infty}(\hat x,y):= \frac{e^{i\frac{\pi}4}}{\sqrt{8\pi k_{+}}}
\left\{\begin{aligned}
&e^{-i k_{+}{\hat x} \cdot y} + \mathcal R(\theta_{\hat x}) e^{-ik_{+}{\hat x}\cdot y{'}},
&&\hat{x}\in\mathbb{S}^1_+,~y\in\Rb^2_+,\\
&\mathcal T(\theta_{\hat x})e^{-ik_{+}(y_1\cos\theta_{\hat x}+iy_2\mathcal S{(\cos\theta_{\hat x},n)})},
&&\hat{x}\in\mathbb{S}^1_+,~y\in\Rb^2_-,\\
\end{aligned}\right.
\end{align}
\begin{align} \label{eq:4.2.18}
H^{\infty}(\hat x,y ) := {e^{-i\frac{\pi}4}}\sqrt{\frac{k_{+}}{8\pi}}
\left\{\begin{aligned} &e^{-i k_{+}\hat x \cdot y} \begin{pmatrix}
 \cos \theta_{\hat x} \\
 \sin \theta_{\hat x}
\end{pmatrix}^T  +  \mathcal R(\theta_{\hat x})e^{-ik_{+}{\hat x}\cdot y{'}} \begin{pmatrix}
\cos \theta_{\hat x} \\
 -\sin \theta_{\hat x}
\end{pmatrix}^T,\\
&\qquad\qquad\qquad\qquad\qquad\qquad\qquad\qquad
\hat{x}\in\mathbb{S}^1_+,~y\in\Rb^2_+, \\
& \mathcal T(\theta_{\hat x})e^{-ik_{+}(y_1\cos\theta_{\hat x}+iy_2\mathcal S{(\cos\theta_{\hat x},n)})}
\begin{pmatrix}
 \cos \theta_{\hat x}\\
 i \mathcal S{(\cos\theta_{\hat x},n)}
\end{pmatrix}^T,\\
&\qquad\qquad\qquad\qquad\qquad\qquad\qquad\qquad
\hat{x}\in\mathbb{S}^1_+,~y\in\Rb^2_-,
\end{aligned}\right.
\end{align}
and $G_{Res}$ and $H_{Res}$ satisfy the estimates
\ben
\vert G_{Res}(x,y)\vert , ~\vert H_{Res}(x,y)\vert  \le {C_{R_0}}{\vert x \vert^{-3/2}},\quad \vert x \vert\rightarrow+\infty,
\enn
uniformly for all $\theta_{\hat{x}}\in(0,\pi)$ and $y\in B^+_{R_0}\cup B^-_{R_0}$.
Here, the constant $C_{R_0}>0$ is independent of $x$ and $y$ but dependent of $R_0$.
\end{theorem}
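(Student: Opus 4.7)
The plan is to combine the explicit Sommerfeld-integral representation \eqref{eq:3.2.2} of $G_{\mathcal R}$ and $G_{\mathcal T}$ with the classical large-argument expansion \eqref{NNe6} of $\frac{i}{4}H^{(1)}_0(k_+\vert x-y\vert )$, and then to extract the leading behaviour of the Sommerfeld integrals by the steepest descent method (as already done in \cite{Car17}). The exponential that oscillates in $\vert x \vert$ carries phase $p(z,\theta_{\hat x}) = z\cos\theta_{\hat x} + i\mathcal{S}(z,1)\sin\theta_{\hat x}$, whose unique saddle point is $z_0 = \cos\theta_{\hat x}$; a direct computation gives $p(z_0,\theta_{\hat x}) = 1$ and $\mathcal{S}(z_0,1) = -i\sin\theta_{\hat x}$. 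Crucially, in the case $k_+<k_-$ (so $n>1$) the branch points $\pm n$ of $\mathcal{S}(z,n)$ lie strictly outside $[-1,1]$, while $z_0\in(-1,1)$ for every $\theta_{\hat x}\in(0,\pi)$; the saddle is therefore uniformly separated from $\pm n$. This is precisely the reason why no critical-angle phenomenon arises in Theorem \ref{NLe:4}, and why the $O(\vert x\vert^{-3/2})$ estimate is available uniformly on the whole open interval $(0,\pi)$ without isolating any angle.

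To carry out the steepest descent, I would deform the real-axis contour in \eqref{eq:3.2.2} (and in the companion formula for $G_{\mathcal T}$) onto the steepest descent path through $z_0$. The branch cuts of $\mathcal{S}(z,1)$ and $\mathcal{S}(z,n)$ are, as dictated by the choices of $\mathcal{S}_1,\mathcal{S}_2$, the vertical half-lines emanating from $\pm 1$ and $\pm n$; moreover the denominator $\mathcal{S}(z,1)+\mathcal{S}(z,n)$ has no real zeros, so the deformation picks up no residues. A standard Watson-type expansion along the steepest descent contour then identifies the leading term as the amplitude at $z_0$ multiplied by the Gaussian factor $\sqrt{2\pi/(k_+\vert x\vert \,p''(z_0,\theta_{\hat x}))}$; combined with the definition \eqref{eq:0} of $\mathcal R,\mathcal T$, this reproduces exactly the expression \eqref{eq:t7} for $G^\infty$. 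The residual is $O(\vert x\vert^{-3/2})$, and its uniformity in $y\in B^+_{R_0}\cup B^-_{R_0}$ is obtained by observing that the $y$-dependent factor $e^{-ik_+\vert y\vert (z\cos\theta_{\hat y}\mp i\mathcal S(z,\cdot )\sin\theta_{\hat y})}$ is analytic and bounded, uniformly for $\vert y\vert \le R_0$, in a fixed complex neighbourhood of the steepest descent contour, which produces the $R_0$-dependent constant.

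For the gradient formula \eqref{eq:4.2.17}, I would differentiate \eqref{eq:3.2.2} (and the analogue for $G_{\mathcal T}$) under the integral sign; the new integrand differs from the original only by a factor linear in $z$ and in $\mathcal{S}(z,\cdot )$, both of which are uniformly bounded on the steepest descent contour, so repeating the saddle-point computation at $z_0$ yields exactly the expression \eqref{eq:4.2.18} for $H^\infty$ together with an $O(\vert x\vert^{-3/2})$ residual satisfying the same $R_0$-dependent bound. The main technical obstacle is the mild degeneracy as $\theta_{\hat x}\to 0$ or $\pi$, when $z_0\to\pm 1$ merges with the branch point of $\mathcal{S}(z,1)$ itself; however, this is the same degeneracy already present in the free-space Helmholtz setting, and the two-layer amplitudes $\mathcal R(\theta_{\hat x})$ and $\mathcal T(\theta_{\hat x})$ remain smooth at these endpoints because $\mathcal{S}(\pm 1,n)$ stays bounded away from zero when $n>1$. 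The argument therefore reduces to invoking \eqref{NNe6} at the endpoints together with the analysis of \cite{Car17}, upon which this theorem is explicitly built.
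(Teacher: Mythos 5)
Your overall route is the one the paper itself takes: the paper's proof of Theorem \ref{NLe:4} is essentially an appeal to the steepest-descent analysis of \cite{Car17} (pages 39--41) together with the assertion that the same arguments yield uniformity, and your outline reconstructs exactly that steepest-descent argument. However, two specific claims in your sketch are the places where "the same arguments" actually require work, and as written they would fail. First, it is not true that the deformation onto the steepest descent path produces nothing beyond the saddle contribution. Although for $n>1$ the branch points $\pm n$ of $\mathcal S(\cdot,n)$ are uniformly separated from the saddle $z_0=\cos\theta_{\hat x}$, after the substitution $z=\cos\zeta$ they sit at $\zeta=\pm i\,\mathrm{arccosh}(n)$ (modulo $\pi$), and the steepest descent path $\mathcal D$ through $\theta_{\hat x}$ \emph{does} cross the corresponding branch cut whenever $\theta_{\hat x}<\theta_c=\arccos(1/n)$ (and, symmetrically, whenever $\theta_{\hat x}>\pi-\theta_c$): on the upper branch of $\mathcal D$ the unique point with $\Rt(\cos\zeta)=n$ has $\Rt(\zeta)<0$, hence $\I(\cos\zeta)>0$, precisely when $n\cos\theta_{\hat x}>1$. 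The deformation therefore leaves a branch-cut (lateral-wave) integral analogous to $G^{(4)}_{\mathcal R}$ in Lemma \ref{Le:n4.2.1}; ruling out pole residues (the denominator $\mathcal S(z,1)+\mathcal S(z,n)$ indeed has no zeros) does not dispose of it. For fixed $\theta_{\hat x}$ this term is exponentially small, of order $e^{-k_+\sqrt{n^2-1}\,\sin\theta_{\hat x}\,\vert x\vert}$ up to algebraic factors, but that rate degenerates as $\theta_{\hat x}\to0,\pi$, so the claimed bound, uniform on all of $(0,\pi)$, requires estimating the cut integral as in Part I of the proof of Lemma \ref{Le:n4.2.2}; the reason one still gets a uniform $O(\vert x\vert^{-3/2})$ (and no $\vert x\vert^{-3/4}$ critical-angle behaviour) is that here the branch point is complex, so its image $s_b$ under $s=\sqrt{2k_+}e^{i\pi/4}\sin((\zeta-\theta_{\hat x})/2)$ stays at a positive distance from the saddle uniformly in $\theta_{\hat x}$. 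Your sketch omits this term entirely.

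Second, the uniformity near the endpoints $\theta_{\hat x}\to0,\pi$ is not "the same degeneracy as in free space" that can be settled by invoking (\ref{NNe6}) at the endpoints: in the $z$-plane the $G_{\mathcal R}$-integrand carries the factor $1/\mathcal S(z,1)$, which blows up at $\pm1$ exactly where the saddle coalesces, so a Watson-type expansion with remainder constant independent of $\theta_{\hat x}$ cannot be run directly in the $z$ variable. The cure---the one the paper uses in its detailed treatment of the case $k_+>k_-$, and what \cite{Car17} does as well---is the angular substitution $z=\cos\zeta$, whose Jacobian removes the $1/\mathcal S(z,1)$ singularity and turns the phase into $ik_+\vert x\vert\cos(\zeta-\theta_{\hat x})$, after which the saddle-point remainder estimates (and the analogous ones for the differentiated integrals giving $H^\infty$) can be made uniform in $\theta_{\hat x}\in(0,\pi)$ and $y\in B^+_{R_0}\cup B^-_{R_0}$. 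With these two repairs your argument coincides with the intended proof; without them the uniform estimates asserted in the theorem are not established.
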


\begin{proof}
By using the steepest decent method,
it was proved in \cite{Car17} that for fixed $\theta_{\hat{x}}\in(0,\pi)$ and
$y\in\mathbb{R}^2_+\cup\Rb^2_-$, $G(x,y)$ and $\nabla_y G(x,y)$
satisfy (\ref{eq:3.2.1}) and (\ref{eq:4.2.17}), respectively, with $G_{Res}(x,y)=O(\vert x \vert^{-3/2})$
and $H_{Res}(x,y)=O(\vert x \vert^{-3/2})$ (see pages 39--41 in \cite{Car17}).
By using the same arguments as in \cite{Car17}, it can be proved that the statement of
this theorem holds true.
\end{proof}

\subsection{The case $k_{+}>k_{-}$}\label{sec:22}

In view of (\ref{NNe6}) and (\ref{e:1.2}), we need to investigate the uniform far-field
asymptotic properties of $G_{\mathcal R}(x,y)$ and $G_{\mathcal T}(x,y)$ with $\theta_{\hat{x}}\in(0,\pi)$.
In what follows, we will give the detailed analysis of $G_{\mathcal R}(x,y)$ with $\theta_{\hat{x}}\in(0,\pi/2)$ {\color{hw-a}(see Lemmas \ref{Le:n4.2.1}, \ref{NLe:3.2.4} and \ref{Le:n4.2.2} below)}.
For $G_{\mathcal R}(x,y)$ with $\theta_{\hat{x}}\in\left[\left.\pi/2,\pi\right)\right.$ and
$G_{\mathcal T}(x,y)$ with $\theta_{\hat{x}}\in(0,\pi)$, the theoretical analyses are similar.
{\color{hw-a}The main results of this subsection on the uniform far-field asymptotics of $G(x,y)$
for all angles $\theta_{\hat{x}}\in(0,\pi)$ will be given in Theorem \ref{NLe:2}.}

Motivated by the ideas in \cite{B84,bg_2,CH95}, we will use the steepest descent method to study
$G_{\mathcal R}(x,y)$. For this, we introduce some paths and curves (see Figure \ref{nf1}).
{\color{hw}
Define the function $h_{D}(z):=-\sgn(z)\arccos[\sech(z)]$ for $z\in\mathbb{R}$,
where $\sgn$ denotes the sign function. It is clear that $h_{D}(z)$ is an odd and strictly monotonically
decreasing function in $z\in\mathbb{R}$.
}
Let the path $\mathcal D$ denote the curve
\ben{\color{hw}
{\mathcal I}_D:=\big\{\zeta\in\mathbb C:\Rt(\zeta)=\theta_{\hat{x}}+h_{D}(\I(\zeta))
%=\theta_{\hat{x}}-\sgn(\Ima(\zeta))\arccos[\sech(\I(\zeta))]
\big\}}
\enn
with the orientation from $-\pi/2 + \theta_{\hat x}+ i\infty $ to $\pi/2 + \theta_{\hat x} - i\infty$
(see Figure \ref{nf1}(a) for the case
$\theta_c\leq\theta_{\hat{x}}<\frac{\pi}{2}$ and
Figure \ref{nf1}(b) for the case $0<\theta_{\hat{x}}<\theta_c$).
{\color{hw}
Note that at infinity the path $\mathcal D$ asymptotically approaches the lines $\Rt(\zeta)=-\pi/2+\theta_{\hat{x}}$
and $\Rt(\zeta)=\pi/2+\theta_{\hat{x}}$.}
See Remark \ref{re1} for the properties of the path $\mathcal D$.
{\color{hw}
Moreover, define the function $h_B(z):=\arccos[\cos(\theta_c)\sech(z)]$ for $z\in\mathbb{R}$.
It is easy to see that $h_B(z)$ is an even function and is strictly monotonically increasing in $z\in\left.\left[0,+\infty\right.\right)$.
}
Let ${\mathcal I}_m:= {\mathcal I}^{+}_m \cup  {\mathcal I}^{-}_m$ with $m \in \mathbb Z$, where
\begin{align*}
{\mathcal I}^{+}_m&:= \big\{\zeta \in \Cb: \Rt(\zeta)
= {\color{hw}h_B(\I(\zeta))+m\pi}, \Ima(\zeta) \le 0 \big\}
\end{align*}
is a curve connecting $\theta_c+m\pi$ and $\pi/2+m\pi-i\infty$, and
\begin{align*}
{\mathcal I}^{-}_m&:= \big\{\zeta \in \Cb: \Rt(\zeta)
= {\color{hw}- h_B(\I(\zeta))+m\pi}, \Ima(\zeta) \ge 0 \big\}
\end{align*}
is a curve connecting $-\theta_c+m\pi$ and $-\pi/2+m\pi+i\infty$.
{\color{hw}
Note that for $m\in\mathbb Z$, both the curves ${\mathcal I}^{+}_m$ and ${\mathcal I}^{-}_{m+1}$ asymptotically
approach the line $\Rt(\zeta)=\pi/2+m\pi$ at infinity.
}
Let $\mathcal{L}_o$ denote the loop around the curve $\mathcal{I}^+_0$
with the orientation indicated in Figure \ref{nf1}(b).
For $l\in\mathbb Z$, let ${\mathcal{\widetilde I}}^{\pm}_{2l}
:=\left\{\Rt(z)-i\Ima(z):z\in\mathcal I^{\pm}_{2l}\right\}$
{\color{hw}
denote the reflections of $\mathcal{I}^{\pm}_{2l}$, respectively, about the real axis} and define
${\mathcal {\widetilde{I}}}_{2l}:={\mathcal {\widetilde I}}^{+}_{2l}\cup{\mathcal{\widetilde I}}^{-}_{2l}$.
Some properties of the curves ${\mathcal {{I}}}_{2l}$, ${\mathcal {{I}}}_{2l+1}$ and
${\mathcal{\widetilde{I}}}_{2l}$ with $l\in\mathbb Z$ are presented in Remark \ref{re2} below.

%\begin{figure}[htbp]
%\centering
%\includegraphics[width=0.6\textwidth]{f3.eps}
%\caption{The contour $\mathcal{L}_o$} \label{nf1}
%\end{figure}

\begin{remark}\label{re1} {\rm
It is easily seen that the path $\mathcal{D}$ will cross the real axis
with only one intersection point $\theta_{\hat{x}}$.
Moreover,
by patient but straightforward calculations, we have
\be \label{eq:12}
\sqrt{2k_{+}}e^{i\frac{\pi}4}\sin{\left(\frac{\zeta-\theta_{\hat x}}{2}\right)}
=-\sqrt{k_+}\frac{\sinh(\I(\zeta))}{\sqrt{\cosh(\I(\zeta))}}~{\rm for~}\zeta\in\mathcal D,
\en
which is needed for later use.
From this, it can be easily derived that $\I(ik_{+}\cos(\zeta - \theta_{\hat x}))= k_{+}$
for $\zeta\in\mathcal D$ and $\Rt(ik_+\cos(\zeta-\theta_{\hat x}))=-k_+\sinh^2(\I(\zeta))/\cosh(\I(\zeta))<0$
for $\zeta \in \mathcal D \backslash \{\theta_{\hat x}\}$.
Actually, we note that $\mathcal{D}$ is the steepest descent path of the function
$ik_{+}\cos(\zeta - \theta_{\hat x})$ through the point $\theta_{\hat x}$.
{\color{hw}
Here, for any analytic function $W(\zeta)$ of the complex variable $\zeta$,
an integration path $\mathcal{Q}$ in the complex plane is called a steepest descent path of $W(\zeta)$
if $\I(W(\zeta))$ is constant on $\mathcal{Q}$ (see \cite{O70}).
Furthermore,
it is worth noting that $ik_{+}\cos(\zeta - \theta_{\hat x})$ has
only one saddle point $\zeta=\theta_{\hat{x}}$
on the steepest descent path $\mathcal D$.
Thus, it can be seen from \cite[Section 7.2]{B84} that
$\Rt(ik_+\cos(\zeta-\theta_{\hat x}))$ decreases most rapidly
from $\theta_{\hat{x}}$ to $-\pi/2 + \theta_{\hat x}+ i\infty $ and
from $\theta_{\hat{x}}$ to $\pi/2 + \theta_{\hat x} - i\infty$ along the curve $\mathcal{I}_D$.}
For more introductions and properties of steepest descent paths,
we refer to \cite{B84,BH86,bg_2,O70}.}
\end{remark}

\begin{remark}\label{re2} {\rm
Let $\mathcal A_1 := \bigcup_{l\in \mathbb Z} {\mathcal I}_{2l}$,
$\mathcal A_2:=\bigcup_{l\in \mathbb Z}{\mathcal I}_{2l+1}$ and
$\mathcal A_3:=\bigcup_{l\in\mathbb Z} \widetilde{\mathcal I}_{2l}$.
By straightforward calculations, it is easily seen that
\ben
&&\mathcal{A}_1=\{\zeta\in \Cb: \Rt(\cos \zeta) = n,~\I(\cos \zeta)\ge 0\},\\
&&\mathcal{A}_2=\{\zeta\in \Cb: \Rt(\cos \zeta) = -n,~\I(\cos \zeta)\le 0\},\\
&&\mathcal{A}_3=\{\zeta\in \Cb: \Rt(\cos \zeta) = n,~\I(\cos \zeta)\leq 0\},
\enn
which imply that $\mathcal S(\cos \zeta, n)$ is analytic in
$\zeta\in\Cb\backslash\left(\mathcal A_1\cup\mathcal A_2\right)$ and
$\widetilde {\mathcal S}(\cos \zeta, n)$ is analytic in
$\zeta\in\Cb \backslash \left(\mathcal A_2 \cup \mathcal A_3\right)$. Thus
the curves in $\mathcal A_1 \cup \mathcal A_2$ are the branch cuts of $\mathcal S(\cos \zeta, n)$
and the curves in $\mathcal A_2 \cup \mathcal A_3$ are the branch cuts of
$\widetilde {\mathcal S}(\cos \zeta, n)$.
Moreover, it can be deduced by direct calculations that the path $\mathcal D$ lies in
$\Cb\backslash \left(\mathcal A_1\cup \mathcal A_2\right)$ for the case
$\theta_{\hat{x}}\in(\theta_c,\pi/2)$, the path $\mathcal D$ will cross $\mathcal A_1\cup\mathcal A_2$
with only one intersection point $\theta_c$ for the case $\theta_{\hat{x}}=\theta_c$
and the path $\mathcal D$ lies in $\Cb \backslash \left(\mathcal A_2 \cup \mathcal A_3\right)$
for the case $\theta_{\hat{x}}\in(0,\theta_c)$ (see Figure \ref{nf1}).
{\color{hw}Here, we note that $\zeta=\theta_c$ is a branch point of $\mathcal S(\cos \zeta, n)$
and $\widetilde {\mathcal S}(\cos \zeta, n)$.}
}
\end{remark}

%\begin{figure}
% %\vspace{6mm}
% \centering
% \input{fig/plot3}
% %\vspace{3mm}
% \caption{The steepest descent path $\mathcal D$, the path $\mathcal{L}_o$, the branch cuts
% $\mathcal{I}^\pm_m$ ($m\in\mathbb{Z}$) and the branch cuts ${\mathcal {\widetilde I}}^{\pm}_{2l}$
% ($l\in\mathbb{Z}$). $\zeta_o$ denotes the intersection point of $\mathcal D$ and $\mathcal I^+_0$
% for the case $0<\theta_{\hat{x}}<\theta_c$.
% }\label{nf1}
%\end{figure}
\begin{figure}
\centering
\includegraphics[width=0.9\textwidth]{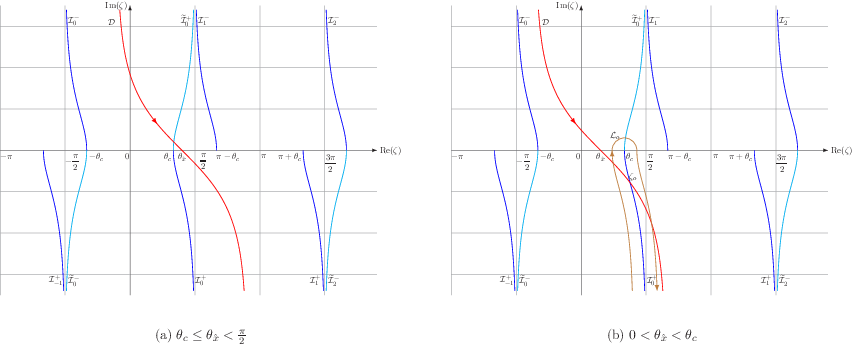}
\caption{The steepest descent path $\mathcal D$, the path $\mathcal{L}_o$, the branch cuts
$\mathcal{I}^\pm_m$ ($m\in\mathbb{Z}$) and the branch cuts ${\mathcal {\widetilde I}}^{\pm}_{2l}$
($l\in\mathbb{Z}$). $\zeta_o$ denotes the intersection point of $\mathcal D$ and $\mathcal I^+_0$
for the case $0<\theta_{\hat{x}}<\theta_c$.}\label{nf1}
\end{figure}

Now we have the following lemma, {\color{hw}which shows that the function $G_{\mathcal R}(x,y)$
with $\theta_{\hat x}\in (0,{\pi}/2)$
can be rewritten as the sum of integrals over the steepest descent path $\mathcal{D}$
and (possibly) an integral over the loop $\mathcal{L}_o$}.

\begin{lemma}\label{Le:n4.2.1}
Assume that $k_+>k_-$. Let $y=\vert y \vert(\cos \theta_{\hat y}, \sin \theta_{\hat y})\in\mathbb{R}^2_+$
with $\theta_{\hat y}\in (0,\pi)$
and $x = \vert x \vert(\cos \theta_{\hat x}, \sin \theta_{\hat x})\in\mathbb{R}^2_+$ with
$\theta_{\hat x}\in (0,{\pi}/2)$. Then the following statements hold.
\begin{enumerate}
\item
For $\theta_{\hat{x}}\in\left[\left.\theta_c,\pi/2\right)\right.$ and $\vert x \vert>\vert y \vert$, we have
\begin{align}\label{neq:4.2.7}
G_{\mathcal R}(x,y) = G^{(1)}_{\mathcal R}(x,y) + G^{(2)}_{\mathcal R}(x,y),
\end{align}
where $G^{(1)}_{\mathcal R}$ and $G^{(2)}_{\mathcal R}$ are given by
\begin{align}
\label{eq7}
&G^{(1)}_{\mathcal R}(x,y) := \frac{i}{4\pi}\int_{\mathcal D}\frac{\cos(2\zeta)-n^2}{n^2-1}e^{ik_{+}
\left(-\vert y \vert\cos(\zeta+\theta_{\hat y})+ \vert x \vert\cos(\zeta-\theta_{\hat x})\right)} d\zeta,\\
\label{eq55}
&G^{(2)}_{\mathcal R}(x,y) := \frac{i}{4\pi}\int_{\mathcal D}
\frac{2i\sin\zeta\mathcal S(\cos\zeta,n)}{n^2-1}e^{ik_{+}\left(-\vert y \vert\cos(\zeta+\theta_{\hat y})
+ \vert x \vert\cos(\zeta-\theta_{\hat x})\right)} d\zeta.
\end{align}

\item
For $\theta_{\hat x}\in\left(0, \theta_c\right)$ and $\vert x \vert>\vert y \vert/\cos(\theta_c)$, we have
\begin{align} \label{neq:4.2.8}
G_{\mathcal R}(x,y)& =G^{(1)}_{\mathcal R}(x,y)+G^{(3)}_{\mathcal R}(x,y)+G^{(4)}_{\mathcal R}(x,y),
\end{align}
where $G^{(1)}_{\mathcal R}$ is given by (\ref{eq7}), and $G^{(3)}_{\mathcal R}$ and
$G^{(4)}_{\mathcal R}$ are given by
\begin{align}
\label{eq56}
G^{(3)}_{\mathcal R}(x,y) := & \frac{i}{4\pi}\int_{\mathcal D}\frac{2i\sin\zeta
\widetilde{{\mathcal S}}(\cos\zeta,n)}{n^2-1} {e^{ik_{+}\left(-\vert y \vert\cos(\zeta+\theta_{\hat y})
+ \vert x \vert\cos(\zeta-\theta_{\hat x})\right)}} d\zeta,\\
\label{eq59}
G^{(4)}_{\mathcal R}(x,y) := & \frac{i}{4\pi}\int_{\mathcal{L}_o}
\frac{2i\sin\zeta\mathcal S{(\cos\zeta,n)}}{n^2-1} {e^{ik_{+}
\left(-\vert y \vert\cos(\zeta+\theta_{\hat y})+ \vert x \vert\cos(\zeta-\theta_{\hat x})\right)}} d\zeta.
\end{align}
\end{enumerate}
\end{lemma}

\begin{proof}
Let $x$ and $y$ be fixed throughout the proof.
Motivated by \cite{AIL05,CH95}, we introduce the change of variable
\begin{align}\label{eq57}
z= \left\{\begin{aligned}
&\cos \eta, &&0 <\eta <\pi, &&\textrm{for}~ -1 < z < 1,  \\
&\cos i\eta, &&\eta\ge 0,&&\textrm{for}~  z\ge 1,  \\
&\cos (\pi+i\eta), &&\eta\le 0,&&\textrm{for}~ z\le -1.
\end{aligned}\right.
\end{align}
Then the formula (\ref{eq:3.2.2}) can be rewritten as
\begin{align}
G_{\mathcal R}(x,y)= &\frac{i}{4\pi}\bigg\{\int^{\pi}_{0} F(\eta,x,y) d\eta
+ \int_{+\infty}^{0} F(i\eta, x,y) i d\eta  + \int^{-\infty}_{0} F(i\eta+\pi, x,y)id\eta\bigg\}\notag \\
= & \frac{i}{4\pi}\int_{\mathcal L} F(\zeta, x, y) d\zeta,\label{eq58}
\end{align}
where $F(\zeta,x,y):=e^{ik_{+}\left[-\vert y \vert\cos(\zeta+\theta_{\hat y})+\vert x \vert\cos(\zeta-\theta_{\hat x})\right]} (-i\sin\zeta-\mathcal{S}{(\cos\zeta,n)})/(-i\sin\zeta+\mathcal{S}{(\cos\zeta,n)})$
and $\mathcal L$ denotes the piecewise linear path with the orientation from
$0+i\infty\rightarrow 0\rightarrow\pi\rightarrow\pi-i\infty$ in the complex plane (see Figure \ref{fig4.2.2}).
{\color{hw-a}Here, we note that the term $\mathcal S(z,1)$ involved in (\ref{eq:3.2.2}) has
two branch cuts and two zeros (see Figure \ref{fig2}). By the change of variable \eqref{eq57},
such function $\mathcal S(z,1)$ is transformed to the entire function $-i\sin\zeta$ and its reciprocal $(\mathcal S(z,1))^{-1}$ involved in (\ref{eq:3.2.2}) is eliminated
due to the chain rule. This indeed leads to a simplified integral expression \eqref{eq58} of $G_{\mathcal R}(x,y)$,
which makes it more convenient to investigate the asymptotic properties of $G_{\mathcal R}(x,y)$ later on in the present subsection.}

Next, we rewrite $G_{\mathcal R}(x,y)$ with the aid of Cauchy integral theorem.
To this end, we distinguish between the following two cases.

\textbf{Case 1:} $\theta_{\hat{x}}\in\left[\left.\theta_c,\pi/2\right)\right.$.
By a straightforward calculation, we have that for $\zeta\in\Cb$,
\begin{align}
&\Ima\left(-\vert y \vert\cos(\zeta+\theta_{\hat y}) + \vert x \vert\cos(\zeta-\theta_{\hat x})\right)\nonumber\\
&\qquad\qquad\qquad
= \sinh(\I(\zeta))\left(\vert y \vert\sin(\Rt(\zeta)+\theta_{\hat y})-\vert x \vert\sin(\Rt(\zeta)-\theta_{\hat x})\right).\label{neq:4.2.3}
\end{align}
Let ${\Rmnum{1}}_{+}:=\{\zeta\in\Cb:\Ima(\zeta)>0,-\frac{\pi}2+\theta_{\hat x}\le\Rt(\zeta)\le 0\}$.
It is easy to verify that
$\sin(\Rt(\zeta)-\theta_{\hat x})\leq\min(-\sin\theta_{\hat{x}},\sin(\Rt(\zeta)+\theta_{\hat y}))$
for $\zeta \in {\Rmnum{1}}_{+}$.
This, together with (\ref{neq:4.2.3}), implies that for $\zeta\in{\Rmnum{1}}_{+}$ and $\vert x \vert>\vert y \vert$,
\begin{align*}
\left\vert e^{ik_{+}\left(-\vert y \vert\cos(\zeta+\theta_{\hat y}) + \vert x \vert\cos(\zeta-\theta_{\hat x})\right)}\right\vert
\le e^{-k_+ \sinh(\I(\zeta)) (\vert x \vert-\vert y \vert)\sin\theta_{\hat x}}.
\end{align*}
Further, choose $\varepsilon>0$ small enough so that
$\varepsilon<\min(\pi/2-\theta_{\hat{x}},(\theta_{\hat{x}}+\theta_{\hat{y}})/2)$
and let ${\Rmnum{1}}_{-,\varepsilon}:= \{\zeta\in\Cb:\Ima(\zeta)<0,
\frac{\pi}2+\theta_{\hat x}-\varepsilon\le\Rt(\zeta)\le \pi \}$.
Note that $\theta_{\hat x}<\pi/2-\varepsilon\leq \Rt(\zeta)-\theta_{\hat x}\leq\pi-\theta_{\hat x}$
and $\pi/2+\varepsilon<\Rt(\zeta)+\theta_{\hat y}<2\pi$ for
$\zeta \in {\Rmnum{1}}_{-,\varepsilon}$.
Then we can easily obtain that $\sin(\Rt(\zeta)-\theta_{\hat x})\geq\max(\sin\theta_{\hat{x}},
\sin(\Rt(\zeta) + \theta_{\hat y}))$ for $\zeta\in{\Rmnum{1}}_{-,\varepsilon}$.
Thus it follows from (\ref{neq:4.2.3}) that: for $\zeta \in {\Rmnum{1}}_{-,\varepsilon}$ and $\vert x \vert>\vert y \vert$,
\ben
\left\vert e^{ik_{+}\left(-\vert y \vert\cos(\zeta+\theta_{\hat y}) + \vert x \vert\cos(\zeta-\theta_{\hat x})\right)}\right\vert
\le e^{k_+ \sinh(\I(\zeta)) (\vert x \vert-\vert y \vert)\sin\theta_{\hat x}}.
\enn
It is easily seen that
\be\label{eq9}
\vert \sin (\zeta)\vert \le \vert\sinh(\I(\zeta))\vert + \cosh(\I(\zeta))~{\rm for~}\zeta\in\mathbb{C},
\en
thus we obtain
\begin{align*}
\left\vert\frac{-i\sin\zeta-\mathcal S{(\cos\zeta,n)}}{-i\sin\zeta+\mathcal S{(\cos\zeta,n)}}\right\vert
&=\left\vert\frac{\cos(2\zeta)-n^2}{n^2-1} + \frac{2i\sin\zeta\mathcal S(\cos\zeta,n)}{n^2-1}\right\vert\\
&\le C \left\{1+\left[\vert\sinh(\I(\zeta))\vert + \cosh(\I(\zeta))\right]^2\right\}~{\rm for~}\zeta\in\mathbb{C},
\end{align*}
where $C>0$ is a constant independent of $\zeta$. Hence, for
$\zeta\in {\Rmnum{1}}_{-,\varepsilon}\cup {\Rmnum{1}}_+$ and fixed $x,y$ with $\vert x \vert>\vert y \vert$,
the function $F(\zeta, x, y)$ is exponentially decreasing as $\vert\I(\zeta)\vert \rightarrow +\infty$.

Define the domain ${\Rmnum{2}}:={\Rmnum{2}}_1 \cup {\Rmnum{2}}_2\cup\mathcal{I}_{\theta_c,\pi-\theta_c}$
in the complex plane, where
$\mathcal{I}_{\theta_c,\pi-\theta_c}:=\{s\in\mathbb{R}:s\in(\theta_c,\pi-\theta_c)\}$,
${\Rmnum{2}}_1$ is the domain enclosed by ${\mathcal I}^-_{0}$, ${\mathcal I}^-_{1}$ and the curve $\{s\in\mathbb{R}:s\in(-\theta_c,\pi-\theta_c)\}$, and ${\Rmnum{2}}_2$ is the domain enclosed by
${\mathcal I}^+_{0}$, ${\mathcal I}^+_{1}$ and the curve $\{s\in\mathbb{R}:s\in(\theta_c,\pi+\theta_c)\}$.
Here, it should be mentioned that $-\theta_c$, $\theta_c$, $\pi-\theta_c$ and $\pi+\theta_c$
are the endpoints of $\mathcal{I}^-_0$, $\mathcal{I}^+_0$, $\mathcal{I}^-_1$ and
$\mathcal{I}^+_1$, respectively.
Note that the paths $\mathcal D$ and $\mathcal L$ lie in $\ov{II}$ (see Figure \ref{fig4.2.2}(a)),
and $F(\zeta, x, y)$ is analytic for $\zeta\in II$ (see Remark \ref{re2}).
Therefore, using the above arguments and applying Cauchy integral theorem, we obtain (\ref{neq:4.2.7}).

%\begin{figure}
% \vspace{6mm}
% \centering
% \input{fig/plot2}
% \vspace{3mm}
% \caption{The original path $\mathcal L$, the steepest descent path $\mathcal D$ for the case
% $\theta_c\leq \theta_{\hat{x}}<\frac{\pi}{2}$ and the path $\mathcal{D}_o$ for the case $0<\theta_{\hat{x}}<\theta_c$.}\label{fig4.2.2}
%\end{figure}
\begin{figure}
\centering
\includegraphics[width=0.9\textwidth]{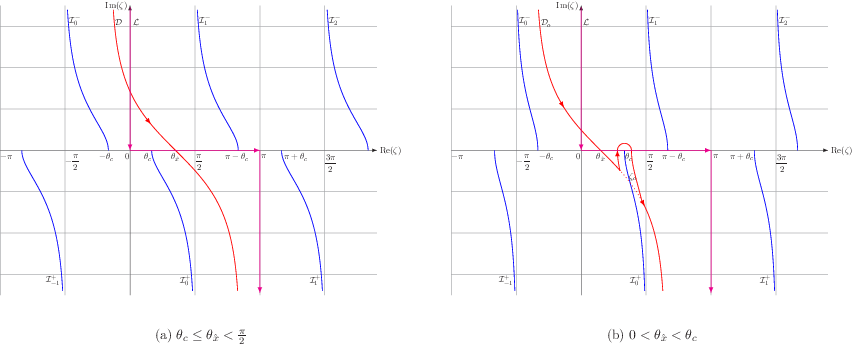}
\caption{The original path $\mathcal L$, the steepest descent path $\mathcal D$ for the case
 $\theta_c\leq \theta_{\hat{x}}<\frac{\pi}{2}$ and the path $\mathcal{D}_o$ for the case $0<\theta_{\hat{x}}<\theta_c$.}\label{fig4.2.2}
\end{figure}

\textbf{Case 2}: $\theta_{\hat{x}}\in\left(0,\theta_c\right)$. In this case,
it can be deduced that the path $\mathcal D$ will cross the branch cut $\mathcal I^+_0$ with
only one intersection point, which is denoted as $\zeta_o$ (see Figure \ref{nf1}(b)).
Thus, using the similar arguments as in Case 1, we have that for $\vert x \vert>\vert y \vert$,
\ben
G_{\mathcal R}(x,y)= G^{(1)}_{\mathcal R}(x,y)+\frac{i}{4\pi}\int_{\mathcal D_o}
\frac{2i\sin\zeta\mathcal S(\cos\zeta,n)}{n^2-1}{e^{ik_{+}\left(-\vert y \vert\cos(\zeta+\theta_{\hat y})
+ \vert x \vert\cos(\zeta-\theta_{\hat x})\right)}} d\zeta,
\enn
where the path $\mathcal{D}_o$ is depicted in Figure \ref{fig4.2.2}(b).
We note that, compared with the path $\mathcal D$,
the path $\mathcal D_o$ has an additional finite-length path around the branch cut $\mathcal{I}^+_0$.

If we shrink the additional path for $\mathcal D_o$, then it follows from
(\ref{eq8}) and the property of $\mathcal{I}^+_0$ given in Remark \ref{re2} that for $\vert x \vert>\vert y \vert$,
\be\label{eq:10}
G_{\mathcal R}(x,y)= G^{(1)}_{\mathcal R}(x,y)+G^{(2)}_{\mathcal R}(x,y)+G_{\mathcal R,\zeta_o,1}(x,y),
\en
where
\begin{align}\label{eq41}
G_{\mathcal R,\zeta_o,1}(x,y):=\frac{i}{2\pi}\int_{\mathcal I_{\theta_c,\zeta_o}}
\frac{2i\sin\zeta\mathcal S_{-}(\cos\zeta,n)}{n^2-1}{e^{ik_{+}\left(-\vert y \vert\cos(\zeta+\theta_{\hat y})
+ \vert x \vert\cos(\zeta-\theta_{\hat x})\right)}}d\zeta.
\end{align}
Here, the path $\mathcal I_{\theta_c, \zeta_o}$ denotes the curve
$\{\zeta\in\mathcal I^+_0:\Rt(\zeta)\in\left[\theta_c,\Rt(\zeta_o)\right]\}$
with its orientation from $\theta_c$ to $\zeta_o$.

From the definition of the path $\mathcal D$, it is easily seen that
\ben
\Rt(\cos \zeta)=\cos(\Rt(\zeta))/\cos(\Rt(\zeta)-\theta_{\hat x}),\quad
\Rt(\zeta)-\theta_{\hat x}\in(-\pi/2,\pi/2)\quad
\textrm{for}~\zeta\in\mathcal D.
\enn
Then it easily follows that: for $\zeta\in\mathcal D$,
$\Rt(\cos\zeta)$ is monotonously decreasing as $\Rt(\zeta)$ increases.
This, together with the fact that $\Rt(\cos\zeta_o)=n$ (see Remark \ref{re2}), implies that
$\Rt(\cos\zeta)<n$ for $\zeta\in \mathcal D_{\zeta_o,\frac\pi2+\theta_{\hat{x}}-i\infty}$ and
$\Rt(\cos\zeta)>n$ for $\zeta\in \mathcal D_{-\frac\pi2+\theta_{\hat{x}}+i\infty,\zeta_o}$,
where the path $\mathcal D_{\zeta_o,\frac\pi2+\theta_{\hat{x}}-i\infty}$ denotes the part of
the path $\mathcal{D}$ starting from $\zeta_o$ and ending at $\frac\pi2+\theta_{\hat{x}}-i\infty$,
and the path $\mathcal D_{-\frac\pi2+\theta_{\hat{x}}+i\infty,\zeta_o}$ denotes the part
of the path $\mathcal{D}$ starting from
$-\frac\pi2+\theta_{\hat{x}}+i\infty$ and ending at $\zeta_o$. Thus, it follows from the
definitions of the functions $\widetilde{\mathcal S}$ and $\mathcal S$ that
\begin{align}
&\widetilde{\mathcal S}(\cos\zeta,n) =  \mathcal S(\cos\zeta,n), && \zeta \in \mathcal D_{-\frac\pi2+\theta_{\hat{x}}+i\infty,\zeta_o}, \notag \\
&\widetilde{\mathcal S}(\cos\zeta,n)=-\mathcal S(\cos\zeta,n), && \zeta \in
\mathcal{D}_{\zeta_o,\frac\pi2+\theta_{\hat{x}}-i\infty}. \label{eq:20}
\end{align}
From this, it is deduced that
\be\label{eq:7}
G^{(2)}_{\mathcal R}(x,y) = G^{(3)}_{\mathcal R}(x,y) + G_{\mathcal R, \zeta_o, 2}(x,y),
\en
where
\begin{align}\label{eq:19}
&G_{\mathcal R, \zeta_o, 2}(x,y) :=
\frac{i}{2\pi}\int_{\mathcal D_{\zeta_o,\frac\pi2+\theta_{\hat{x}}-i\infty}} \frac{2i\sin\zeta\mathcal S(\cos\zeta,n)}{n^2-1} {e^{ik_{+}\left(-\vert y \vert\cos(\zeta+\theta_{\hat y})+ \vert x \vert\cos(\zeta-\theta_{\hat x})\right)}} d\zeta.
\end{align}
Using (\ref{neq:4.2.3}) and (\ref{eq9}), we have that for $\zeta \in \{\zeta\in\Cb:\Ima(\zeta)< 0, \frac{\pi}2-(\theta_c-\theta_{\hat x}) \le \Rt(\zeta) \le \frac{\pi}2 + \theta_{\hat x}\}$,
\begin{align*}
&\left\vert\frac{2i\sin\zeta\mathcal S(\cos\zeta,n)}{n^2-1} {e^{ik_{+}\left(-\vert y \vert\cos(\zeta+\theta_{\hat y})
+ \vert x \vert\cos(\zeta-\theta_{\hat x})\right)}}\right\vert \\
&\le C \left[1+\left(\vert\sinh(\I(\zeta))\vert + \cosh(\I(\zeta))\right)^2\right]
 e^{k_+ \sinh(\I(\zeta)) \left(\vert x \vert\cos({\theta_{c}})-\vert y \vert\right)},
\end{align*}
where $C>0$ is a constant independent of $\zeta$. From this, (\ref{eq:19}) and the fact that
$\mathcal S(\cos\zeta,n)$ is analytic for $\zeta\in II_2$ with $II_2$ defined as in Case 1,
we can apply Cauchy integral theorem to obtain that: for $\vert x \vert> \vert y \vert/\cos(\theta_c)$,
\begin{align} \label{eq:9}
&G_{\mathcal R,\zeta_o, 2}(x,y) = \frac{i}{2\pi}\int_{\mathcal I_{\zeta_o,\frac{\pi}2-i\infty}}
\frac{2i\sin\zeta\mathcal S_{-}(\cos\zeta,n)}{n^2-1} {e^{ik_{+}\left(-\vert y \vert\cos(\zeta+\theta_{\hat y})
+ \vert x \vert\cos(\zeta-\theta_{\hat x})\right)}} d\zeta,
\end{align}
where the path $\mathcal I_{\zeta_o,\frac{\pi}2-i\infty}$ denotes the curve
$\{\zeta \in \mathcal I^+_0: \Rt(\zeta)\in \left(\Rt(\zeta_o), \frac\pi2\right)\}$
with the orientation from $\zeta_o$ to $\frac{\pi}2-i\infty$.
Thus it easily follows from (\ref{eq8}), (\ref{eq41}) and (\ref{eq:9}) that: for $\vert x \vert>\vert y \vert/\cos(\theta_c)$,
\begin{align}\label{eq:11}
G^{(4)}_{\mathcal R}(x,y) = G_{\mathcal R,\zeta_o, 1}(x,y) + G_{\mathcal R,\zeta_o, 2}(x,y).
\end{align}
This, together with (\ref{eq:10}) and (\ref{eq:7}), implies that the formula (\ref{neq:4.2.8}) holds.
The proof is thus complete.
\end{proof}

\begin{remark}\label{re6}{\rm {\color{hw}
As mentioned in Remark \ref{re1}, $ik_{+}\cos(\zeta - \theta_{\hat x})$ has
only one saddle point $\zeta=\theta_{\hat{x}}$ on the steepest descent path $\mathcal D$.
Note that $ik_{+}\cos(\zeta - \theta_{\hat x})$ appears in the exponential functions
in both \eqref{eq55} and \eqref{eq56}.
Thus, by the steepest descent method, the asymptotic expansions of $G^{(2)}_{\mathcal R}(x,y)$ and $G^{(3)}_{\mathcal R}(x,y)$ for large $|x|$ are expected to depend on the series expansions of
$\mathcal S(\cos\zeta,n)$ and $\widetilde{{\mathcal S}}(\cos\zeta,n)$, respectively, at the saddle point
$\zeta=\theta_{\hat{x}}$ (see \cite[Section 7.3]{B84}), where $\mathcal S(\cos\zeta,n)$ and
$\widetilde{{\mathcal S}}(\cos\zeta,n)$ appear in the amplitude functions in \eqref{eq55} and \eqref{eq56}, respectively.
However, since $\zeta=\theta_c$ is a branch point of $\mathcal S(\cos \zeta, n)$
and $\widetilde {\mathcal S}(\cos \zeta, n)$ (see Remark \ref{re2}), the modulus of
every derivative of ${\mathcal S}(\cos\zeta,n)$ and $\widetilde{{\mathcal S}}(\cos\zeta,n)$
at the saddle point $\zeta=\theta_{\hat{x}}$ will blow up when the angle $\theta_{\hat{x}}$ tends to $\theta_c$.
This leads to difficulties in the
investigation of the uniform far-field asymptotics of $G^{(2)}_{\mathcal R}(x,y)$ and $G^{(3)}_{\mathcal R}(x,y)$
for the angles $\theta_{\hat{x}}$ very close to the branch point $\theta_c$.
On the other hand, because of the presence of the branch cuts, we need to analyze not only the asymptotics
of the integrals $G^{(j)}_{\mathcal R}(x,y)$ $(j=1,2,3)$ over the steepest descent path $\mathcal{D}$
but also the asymptotics of the integral $G^{(4)}_{\mathcal R}(x,y)$ over the loop $\mathcal{L}_o$.
This leads to another difficulty in investigating the uniform far-field asymptotics of $G_{\mathcal{R}}(x,y)$,
since the loop $\mathcal{L}_o$ is around the branch cut $\mathcal{I}^+_0$ of ${\mathcal S}(\cos(\cdot),n)$
appearing in the amplitude function in \eqref{eq59}.
}}
\end{remark}

\begin{remark}\label{re7}{\rm{\color{hw}
The steepest descent path $\mathcal{D}$ was also used in \cite[formula (9)]{CH95} for deriving the uniform
far-field asymptotic expansion of the Green function for the Helmholtz equation in a half-plane with impedance
boundary condition. See \cite[Section 2]{CH95} for an equivalent definition of $\mathcal{D}$.
The integrand in \cite[formula (9)]{CH95} is analytic in the complex plane except for an infinite countable number of
poles (see the discussion following \cite[formula (7)]{CH95}). In our case, however, due to the analyticity of
${\mathcal S}(\cos(\cdot),n)$ and $\widetilde{{\mathcal S}}(\cos(\cdot),n)$ (see Remark \ref{re2}),
the integrands in \eqref{eq55} and \eqref{eq56} are analytic in the complex plane except for an infinite countable number
of branch cuts. Therefore, the asymptotic analysis for \cite[formula (9)]{CH95} does not work for
the study of the uniform far-field asymptotics of $G^{(2)}_{\mathcal R}(x,y)$ and $G^{(3)}_{\mathcal R}(x,y)$
for the angles $\theta_{\hat{x}}$ very close to the branch point $\theta_c$,
which is difficult as discussed in Remark \ref{re6}.}}
\end{remark}

{\color{hw}
To overcome the difficulties mentioned in Remark \ref{re6}, in what follows we will introduce some useful integral identities (see Lemma \ref{NLL1} below) and give a rigorous analysis of the singularities
of ${\mathcal S}(\cos(\cdot),n)$ and $\widetilde{{\mathcal S}}(\cos(\cdot),n)$ (see Lemmas \ref{Nle:1}, \ref{Le:3.2.1} and \ref{le:4} below).}

For the subsequent use, we need some special functions. Let $D_{\beta}(z)$ denote the parabolic
cylinder function and  let $\Gamma(z)$ denote the Gamma function. We refer to \cite{W_27} for
the definitions and properties of $D_{\beta}(z)$ and $\Gamma(z)$.
In particular, it is known that $\Gamma(z)$ is analytic except at the points $z=0,-1,-2,\ldots$
(see, e.g., Section 12.1 in \cite{W_27})
and $D_\beta(z)$ is analytic in $z\in\mathbb{C}$ for any $\beta\in\mathbb{R}$ (see, e.g.,
Section 16.5 in \cite{W_27}).
The following lemma presents some useful results relevant to $D_{\beta}(z)$ and $\Gamma(z)$,
which are given in \cite{bg_2}.

\begin{lemma}[see formulas (A.3.25) and (A.3.26) in \cite{bg_2}]\label{NLL1}
Assume $\beta\in\mathbb{R}$ with $\beta>-1$ and $\rho,b\in\mathbb{C}$ with $\I(\rho)>0$, $\I(b)\ne 0$.
Consider the integrals  $F_2(\rho,b,\beta):= \int^{+\infty}_{-\infty}(s-b)^{\beta}e^{i\rho s^2}ds$
and $F_3(\rho,b,\beta):= \int_{\gamma_2}(s-b)^{\beta}e^{i\rho s^2}ds $, where $\gamma_2$ is a loop
around the branch cut $\{z\in \Cb: \I(z)= \I(b), \Rt(z)\le \Rt(b)\}$ depicted in Figure \ref{f3.1}.
Then, we have
%
%\begin{numcases}
%F_2(\rho,b,\beta)=e^{i\rho \frac{b^2}2}\sqrt{2\pi}{\left(\frac{1}{2\rho}\right)}^{\frac{\beta+1}2}
%e^{\frac{i\pi(3\beta+1)}4}D_{\beta}(\sqrt{2\rho}b e^{\frac{i\pi}4}),  & $\I(b)<0 $, \label{neq:4.2.33} \\
%e^{i\rho \frac{b^2}2}\sqrt{2\pi}{\left(\frac{1}{2\rho}\right)}^{\frac{\beta+1}2}
%e^{\frac{i\pi(-\beta+1)}4}D_{\beta}(\sqrt{2\rho}b e^{\frac{-i3\pi}4}), & $\I(b)>0$,\label{j:e11}
%\end{numcases}
\begin{empheq}[left={F_2(\rho,b,\beta)= \empheqlbrace}]{align}
&e^{i\rho \frac{b^2}2}\sqrt{2\pi}{\left(\frac{1}{2\rho}\right)}^{\frac{\beta+1}2}e^{\frac{i\pi(3\beta+1)}4}D_{\beta}(\sqrt{2\rho}b e^{\frac{i\pi}4}),  & \I(b)<0, \label{neq:4.2.33} \\
&e^{i\rho \frac{b^2}2}\sqrt{2\pi}{\left(\frac{1}{2\rho}\right)}^{\frac{\beta+1}2}e^{\frac{i\pi(-\beta+1)}4}D_{\beta}(\sqrt{2\rho}b e^{\frac{-i3\pi}4}), &  \I (b)>0, \label{j:e11}
\end{empheq}
and
\begin{align} \label{eq:j.3}
{F_3(\rho,b,\beta)= \sgn{(\I(b))}e^{i\rho \frac{b^2}2}{\left(\frac{1}{2\rho}\right)}^{\frac{\beta+1}2}
e^{\frac{i\pi(\beta+3)}4}\frac{2\pi}{\Gamma(-\beta)}D_{-\beta-1}(\sqrt{2\rho}b e^{\frac{i3\pi}4})},
\end{align}
where $\sgn$ denotes the sign function.
\end{lemma}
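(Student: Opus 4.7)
The plan is to reduce both integrals to classical representations of the parabolic cylinder function by (i) translating the integration variable to absorb $b$ into the exponent, (ii) rotating and rescaling so that the oscillatory kernel $e^{i\rho s^2}$ becomes the standard Gaussian $e^{-u^2/2}$, and (iii) matching the outcome against known identities for $D_\beta$ such as those in Whittaker--Watson \cite{W_27}.

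Concretely, I would first substitute $t=s-b$ and complete the square to obtain $F_2 = e^{i\rho b^2}\int_{\Rb-b} t^\beta e^{i\rho t^2 + 2i\rho b t}\,dt$. Because $\Ima(\rho)>0$, the kernel $e^{i\rho t^2}$ decays Gaussianly in an open cone of directions, so the shifted contour $\Rb-b$ can be deformed back to (a rotation of) the real axis, provided the deformation stays on the appropriate side of the branch cut of $t^\beta$. The sign of $\Ima(b)$ dictates which side, and this is precisely the origin of the two cases in (\ref{neq:4.2.33})--(\ref{j:e11}). A further rescaling $u=\sqrt{-2i\rho}\,t$ (principal branch, so that $\sqrt{-2i\rho}=\sqrt{2\rho}\,e^{-i\pi/4}$) turns the exponent into $-u^2/2+izu$ with $z=b\sqrt{2\rho}\,e^{i\pi/4}$ when $\Ima(b)<0$, or $z=b\sqrt{2\rho}\,e^{-3i\pi/4}$ when $\Ima(b)>0$. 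The Jacobian together with the $t^\beta$ factor produces the $(1/(2\rho))^{(\beta+1)/2}$ prefactor and the correct powers of $e^{i\pi/4}$, while the identity $z^2/4 = i\rho b^2/2$ converts the leftover $e^{i\rho b^2}$ into $e^{i\rho b^2/2}\cdot e^{-z^2/4}$, yielding the phase structure seen in the lemma.

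At this point the residual integral $\int u^\beta e^{-u^2/2+izu}\,du$ (taken along a straight line and interpreted through the chosen branch of $u^\beta$) matches a classical representation of $D_\beta$: one uses the Laplace-type formula $\int_0^\infty u^\beta e^{-u^2/2-wu}\,du = \Gamma(\beta+1)e^{w^2/4}D_{-\beta-1}(w)$, valid for $\beta>-1$, together with the connection formula expressing $D_\beta(z)$ as a linear combination of $D_{-\beta-1}(\pm iz)$. For $F_3$, the same substitutions transport the loop $\gamma_2$ around the branch cut of $(s-b)^\beta$ into a Hankel contour around the branch cut of $u^\beta$, with the orientation inherited from $\gamma_2$; this is the source of the $\sgn(\Ima(b))$ factor. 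Invoking Hankel's integral representation of $1/\Gamma(-\beta)$ along such a contour, combined with the Hankel-loop representation of $D_{-\beta-1}$, then directly gives (\ref{eq:j.3}).

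The main obstacle is the bookkeeping of branches: tracking the argument of $\sqrt{2\rho}$, of $u^\beta$ on each portion of the rotated contour, of the rotation accumulated during the contour deformation in the shifted $t$-plane, and of the orientation of the Hankel loop under each map. All the nontrivial content of the lemma resides in these phase factors; the analytic justification of each contour move (Gaussian decay at infinity and absence of singularities off the branch cut) is routine once the relevant paths have been drawn.
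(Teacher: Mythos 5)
Your outline follows the same strategy as the paper's own proof of (\ref{neq:4.2.33}): substitute $t=s-b$, rotate/rescale by $\sqrt{2\rho}\,e^{-i\pi/4}$ so the kernel becomes a Gaussian, and identify the result with a parabolic-cylinder integral (the paper cites the contour formula $W^+_\beta(z)=\int_{\mathcal C_+}t^\beta e^{-zt-t^2/2}dt=\sqrt{2\pi}e^{i\pi\beta/2+z^2/4}D_\beta(iz)$ from \cite{BH86}, whereas you would reach $D_\beta$ through the half-line representation of $D_{-\beta-1}$ plus a connection formula; also note the paper only proves (\ref{neq:4.2.33}), quoting (\ref{j:e11}) and (\ref{eq:j.3}) from \cite{bg_2}). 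The difference in the final identification is harmless; the problem is what you leave out.

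The genuine gap is the branch-cut analysis that you explicitly defer as ``routine bookkeeping'': it is the entire content of the lemma, and your description of it is not even correct as stated. You claim the shifted contour can be rotated ``provided the deformation stays on the appropriate side of the branch cut of $t^\beta$.'' In general it cannot: with the paper's branch of $(\cdot)^\beta$ (cut on the nonpositive reals), when $\Rt(\rho)>0$ the rotated line necessarily crosses the cut, so the deformation is not a plain application of Cauchy's theorem. The paper must split the rotated contour into $l^+_\alpha$ and $l^-_\alpha$, record the jump $t^\beta_+=e^{2\pi i\beta}t^\beta_-$ across the cut (formulas (\ref{eq19})--(\ref{eq21}) in Appendix \ref{sec:a}), and then check that this factor is exactly cancelled by the branch mismatch $\bigl((\sqrt{2\rho})^{-1}e^{i\pi/4}z\bigr)^\beta\neq\bigl((\sqrt{2\rho})^{-1}e^{i\pi/4}\bigr)^\beta z^\beta$ on the lower piece after the rescaling; the analogous issue reappears in your version when you split the slanted $u$-line at the branch point to invoke the half-line formula. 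Nothing in your proposal produces or cancels these $e^{\pm2\pi i\beta}$ factors, and since the lemma exists precisely to fix a phase error in \cite{bg_2}, a proof that postpones all phase tracking has not proved the statement. A symptom of the missing bookkeeping: your assertion that $z^2/4=i\rho b^2/2$ ``converts $e^{i\rho b^2}$ into $e^{i\rho b^2/2}e^{-z^2/4}$'' has the wrong sign, since $e^{i\rho b^2}=e^{i\rho b^2/2}e^{+z^2/4}$. The treatment of $F_3$ is likewise only a citation of Hankel-type representations with the orientation and $\sgn(\I(b))$ factor unverified.
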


%\begin{figure}[htbp]
%\centering
%\includegraphics[width=0.55\textwidth]{case2.eps}
%\caption{The contour for $F_3(\rho,b,\beta)$} \label{f3.1}
%\end{figure}

%\begin{figure}[htbp]
%  \centering
% \input{fig/plot4}
% \caption{ The path $\gamma_2$ for $F_3(\rho,b,\beta)$}\label{f3.1}
%\end{figure}
\begin{figure}
\centering
\includegraphics[width=0.9\textwidth]{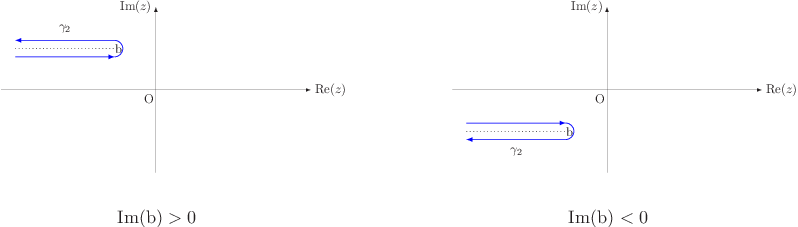}
\caption{The path $\gamma_2$ for $F_3(\rho,b,\beta)$.}\label{f3.1}
\end{figure}

\begin{remark}{\rm
There was a typo in \cite[formula (A.3.25)]{bg_2} for the integral $F_2(\rho,b,\beta)$ with $\I(b)<0$
and \cite[formula (A.3.25)]{bg_2} was given without a proof.
In formula (\ref{neq:4.2.33}), we give the correct expression for this typo.
For the proof of formula (\ref{neq:4.2.33}), see Appendix \ref{sec:a}.
}
\end{remark}

To proceed further, we need the following Lemmas \ref{Nle:1} and \ref{Le:3.2.1}.
The proofs of these two lemmas will be given in Appendices \ref{sec:b1} and \ref{sec:b2}, respectively.

\begin{lemma}\label{Nle:1}
Let $f(s): = a_0 + a_1 P(s) + a_2Q(s)$, $s\in\mathbb{C}$, where the constants
$a_0\ge 0, a_1>0, a_2\in\Rb$, and the functions $P(s)$ and $Q(s)$ are given by
\ben
P(s):=\sqrt{1-\frac{s^2}{2k_{+}i}},\quad  Q(s):=\frac{se^{-i\frac{\pi}4}}{\sqrt{2k_{+}}}.
\enn
Define $\mathsf w_0: = {a_1\sqrt{k_{+}}}/{\sqrt{a_1^2+a_2^2}}$.
Then the following statements hold.
\begin{enumerate}
\item\label{se3}
When $a_2\geq0 $, we have $\{f(s):s\in L_{-\mathsf w_0,\sqrt{k_{+}}}\}\cap\{s\in\Rb:s\le 0\}=\emptyset$.
\item\label{se4}
When $a_2<0$, we have $\{f(s):s\in L_{ -\sqrt{k_{+}},\mathsf w_0}\}\cap \{s\in \Rb: s\le 0\}=\emptyset$.
\end{enumerate}
\end{lemma}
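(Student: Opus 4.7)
The plan is to argue by contradiction: I will suppose that some $s_0$ in the stated open strip satisfies $f(s_0)=t$ for some $t\le 0$, and derive a contradiction. Setting $\alpha:=t-a_0\le 0$, $r:=\sqrt{a_1^2+a_2^2}$, and parameterizing $(a_1,a_2)=r(\cos\phi,\sin\phi)$ with $\phi\in[0,\pi/2)$ in part (1) and $\phi\in(-\pi/2,0)$ in part (2), we have $\mathsf w_0=\sqrt{k_+}\cos\phi$.

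First I would rewrite $f(s_0)=t$ as $a_1 P(s_0)=\alpha-a_2 Q(s_0)$. Since the branch convention for $z^{1/2}$ (with $\arg\in(-\pi,\pi)$) forces $\Rt(P(s_0))\ge 0$, a necessary compatibility constraint is
\begin{equation*}
\Rt(\alpha-a_2 Q(s_0))\ge 0. \qquad (\star)
\end{equation*}
Squaring the equation and inserting $P^2(s)=1-s^2/(2k_+ i)$ yields a quadratic in $s_0$ whose discriminant simplifies cleanly to $-2ia_1^2(r^2-\alpha^2)/k_+$. I would then split into two cases according to the sign of $r^2-\alpha^2$.

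In the case $\alpha^2\le r^2$ I would further parameterize $\alpha=-r\sin\psi$ with $\psi\in[0,\pi/2]$ and compute the two roots of the quadratic as
\begin{equation*}
s_+ = -\sqrt{2k_+}\,e^{i\pi/4}\cos(\phi-\psi), \qquad s_- = \sqrt{2k_+}\,e^{i\pi/4}\cos(\phi+\psi).
\end{equation*}
A direct substitution yields $1-s_\pm^2/(2k_+ i)=\sin^2(\phi\mp\psi)$, together with $\alpha-a_2 Q(s_\pm)=\pm r\cos\phi\sin(\phi\mp\psi)$, which lets me test $(\star)$ and pick the correct branch in $a_1 P(s_0)=\alpha-a_2 Q(s_0)$. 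In part (1) the only possibly valid root is $s_+$, occurring when $\phi\ge\psi$, and then $\I(s_+)=-\sqrt{k_+}\cos(\phi-\psi)\le-\sqrt{k_+}\cos\phi=-\mathsf w_0$ because $0\le\phi-\psi\le\phi$; in part (2) the only possibly valid root is $s_-$, occurring when $\psi\le|\phi|$, and then $\I(s_-)=\sqrt{k_+}\cos(\phi+\psi)\ge\sqrt{k_+}\cos\phi=\mathsf w_0$ because $|\phi+\psi|\le|\phi|$. In the remaining sub-case both candidates are extraneous, so no valid $s_0$ exists. Either way $s_0$ must lie outside the open strip, contradicting our assumption.

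In the case $\alpha^2>r^2$ I would parameterize $\alpha=-r\cosh\xi$ with $\xi>0$, so that the candidate roots of the squared equation become $\sqrt{2k_+}\bigl[-\sin\phi\cosh\xi\,e^{i\pi/4}\pm\cos\phi\sinh\xi\,e^{-i\pi/4}\bigr]$. A short computation then yields, for both roots, $\Rt(\alpha-a_2 Q(s_\pm))=-r\cos^2\phi\cosh\xi<0$, which contradicts $(\star)$; hence no valid $s_0$ exists in this case either. The main obstacle throughout is the bookkeeping: tracking the correct branch of $P$ at each root of the squared equation, and using $(\star)$ to detect which roots are extraneous, is precisely what rules out every candidate $s_0$ in the open strip and establishes both statements simultaneously.
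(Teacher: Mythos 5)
Your proposal is correct, and it takes a genuinely different route from the paper. The paper's own proof never solves for $s$: it first shows $\Rt(P(s))>0$ on $L_{\sqrt{k_+}}$ (by checking that $h(s)=1-s^2/(2k_+i)$ avoids the negative real axis), then partitions the strip into the four sectors determined by the diagonals $\I(s)=\pm\Rt(s)$ plus those two lines, and verifies piece by piece that either $\Rt(f(s))>0$ or $\I(f(s))\neq 0$, using only the signs of $\Rt(P)$, $\I(P)$, $\Rt(Q)$, $\I(Q)$ in each sector (on the diagonals $h(s)$ and $Q(s)$ become real or purely imaginary, which settles those cases directly). You instead exploit the algebraic identity $P^2(s)+Q^2(s)=1$ to turn $f(s_0)=t\le 0$ into a quadratic in $Q(s_0)$, compute both roots in closed form (your discriminant $-2ia_1^2(r^2-\alpha^2)/k_+$ and the values $P^2(s_\pm)=\sin^2(\phi\mp\psi)$, $\alpha-a_2Q(s_\pm)=\pm r\cos\phi\sin(\phi\mp\psi)$, and $\Rt(\alpha-a_2Q(s_\pm))=-r\cos^2\phi\cosh\xi$ in the case $\alpha^2>r^2$ all check out), and then exclude every candidate either as an extraneous root of the squared equation (violating $\Rt(P)\ge 0$) or as lying outside the open strip, since $\I(s_+)=-\sqrt{k_+}\cos(\phi-\psi)\le-\mathsf w_0$ resp.\ $\I(s_-)=\sqrt{k_+}\cos(\phi+\psi)\ge\mathsf w_0$. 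What each approach buys: the paper's sector argument is shorter and purely sign-based, needing nothing beyond $\Rt(P)>0$ on the strip; yours requires more trigonometric bookkeeping and care with extraneous roots, but it actually locates the preimages of $(-\infty,0]$, thereby showing in passing that the strip widths $\mathsf w_0$ and $\sqrt{k_+}$ in the lemma are sharp, and the clean closure of the algebra reflects the role the lemma plays later, where $P$ and $Q$ are exactly $\cos\bigl((\zeta-\theta_{\hat x})/2\bigr)$ and $\sin\bigl((\zeta-\theta_{\hat x})/2\bigr)$.
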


\begin{lemma} \label{Le:3.2.1}
Let $A(z)$ and $B(z)$ be two analytic functions in a strip  $ L_{\mathsf  w} $ with
$ \mathsf w>0 $. If $ A^2(z) = B^2(z) $ for $ z\in  L_{\mathsf w} $ and $A(0)= B(0)\ne 0$.
Then we have $A(z)= B(z)$ for $z\in  L_{\mathsf w}$.
\end{lemma}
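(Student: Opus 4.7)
The plan is to factor the identity $A^2(z) = B^2(z)$ as $(A(z) - B(z))(A(z) + B(z)) = 0$ on the strip $L_{\mathsf w}$, and then exploit the identity theorem for analytic functions together with the non-vanishing condition at $z=0$ to conclude that the first factor vanishes identically.

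First, I would set $F(z) := A(z) - B(z)$ and $G(z) := A(z) + B(z)$. Both are analytic on the strip $L_{\mathsf w}$ because $A$ and $B$ are. The hypothesis $A^2 \equiv B^2$ on $L_{\mathsf w}$ yields $F(z) G(z) \equiv 0$ on $L_{\mathsf w}$. The key input from the assumption $A(0) = B(0) \neq 0$ is twofold: it forces $F(0) = 0$, which is consistent with our goal, and more importantly it gives $G(0) = 2 A(0) \neq 0$.

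Next, by continuity of the analytic function $G$ at $z = 0$, there exists an open disk $U \subset L_{\mathsf w}$ centered at $0$ on which $G(z) \neq 0$. On $U$, the relation $F(z) G(z) = 0$ forces $F(z) = 0$. Thus $F$ is an analytic function on the connected open set $L_{\mathsf w}$ which vanishes on the open subset $U$. The strip $L_{\mathsf w} = \{ z \in \mathbb{C} : -\mathsf w < \mathrm{Im}(z) < \mathsf w \}$ is a domain (open and connected), so the identity theorem for analytic functions applies and yields $F \equiv 0$ on $L_{\mathsf w}$, which is exactly the desired conclusion $A(z) = B(z)$ for all $z \in L_{\mathsf w}$.

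I do not anticipate any serious obstacle; the statement is a direct consequence of the identity theorem once the factorization is performed. The only point to handle with minimal care is making sure that $G$ is nonzero in a full neighborhood of $0$ (so that $F$ vanishes on an open set, not merely a sequence), but this follows from the continuity of $G$ together with $G(0) \neq 0$.
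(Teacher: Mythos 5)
Your proof is correct, and it takes a genuinely different route from the paper. You factor $A^2-B^2=(A-B)(A+B)\equiv 0$ on $L_{\mathsf w}$, observe that $A(0)+B(0)=2A(0)\neq 0$ so that $A+B$ is nonvanishing on a disk about $0$ by continuity, conclude $A-B\equiv 0$ on that disk, and finish with the identity theorem on the connected strip. The paper instead works directly with Taylor expansions at $0$: writing $A(z)=\sum_j A_j z^j$, $B(z)=\sum_j B_j z^j$ on a small disk, it equates the coefficients of $A^2$ and $B^2$ and, using $A_0=B_0\neq 0$, proves $A_j=B_j$ for all $j$ by induction on $j$, so $A=B$ on the disk; the extension to all of $L_{\mathsf w}$ then again rests on analyticity (i.e.\ analytic continuation, the same final ingredient you use). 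Your factorization argument is shorter and avoids the coefficient bookkeeping, while the paper's induction is more elementary in that it only manipulates power series and does not explicitly invoke the zero-set structure of the product $FG$; both are complete, and both ultimately hinge on the identity theorem together with the hypothesis $A(0)=B(0)\neq 0$, which is exactly what rules out the branch $A\equiv -B$.
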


Let $\arcsin(z):=\int^z_0 1/\sqrt{1-t^2}dt$ denote the principal value of the inverse of the
sine function in the complex plane, where the path of integration must not cross the branch cuts
$\{z\in\Rb:\vert z\vert\ge 1\}$ (see, e.g., Section 4.23 in \cite{OLBC10}). It is known that $\arcsin (z)$
is analytic in the domain $\Cb\backslash\{z\in\Rb: \vert z\vert\ge 1\}$.
Then, employing Lemmas \ref{Nle:1} and \ref{Le:3.2.1} we get the following lemma.

\begin{lemma} \label{le:4}
Let $P (s)$ and $Q (s)$ be the functions defined in Lemma \ref{Nle:1}.
Assume $\theta_{\hat x} \in (0,\pi/2)$ and define
${\zeta}(s) := 2 {\arcsin} (Q(s)) + \theta_{\hat x}$. Then the following statements hold.
\begin{enumerate}
  \item \label{e1}
  $\zeta(s)$ is analytic in the strip $L_{\sqrt{k_{+}}}$ and satisfies
  \begin{align} \label{neq:4.2.10}
 P(s)=\cos{\left(\frac{\zeta(s)-\theta_{\hat x}}{2}\right)},\;\;
 Q(s)=\sin{\left(\frac{\zeta(s)-\theta_{\hat x}}{2}\right)}\quad\textrm{for}~s\in L_{\sqrt{k_+}}.
 \end{align}
Moreover, ${\zeta}(s)$ is a bijection from $\mathbb R$ onto $\mathcal D$
and ${\zeta}(s)$ travels from $-\pi/2+\theta_{\hat x}+i\infty$ to $\pi/2 +\theta_{\hat x}-i\infty$
on the path $\mathcal D$ as $s$ travels from $-\infty$ to $+\infty$ along the real axis.

\item
\label{e2} In the case $\theta_{\hat x}\in(\theta_c,\pi/2)$, we have
\begin{align}
&\sqrt {2/k_{+}}e^{-i\frac{\pi}{4}}H_{\theta_c}(s) H_{\pi-\theta_c}(s)\sqrt{s-s_b^{*}}\sqrt{s-s_b}
= \mathcal S(\cos(\zeta(s)),n)\quad\textrm{for}~ s\in\Rb\label{neq:4.2.45}
\end{align}
and in the case $\theta_{\hat x}\in(0,\theta_c)$, we have
\begin{align}
&-\sqrt {2/k_{+}}e^{-i\frac{\pi}{4}}H_{\theta_c}(s) H_{\pi-\theta_c}(s)\sqrt{s-s_b^{*}}\sqrt{s-s_b}
= \widetilde{\mathcal S}(\cos(\zeta(s)),n) \notag \\
&\qquad\qquad\qquad\qquad\qquad\qquad\qquad\qquad\qquad\qquad \quad \textrm{for}~ s\in L_{\vert\I(s_b)\vert}, \label{eq:1}
\end{align}
where
\ben
s_b:=\sqrt{2k_{+}}e^{i\frac{\pi}4}\sin {\left(\frac{\theta_c-\theta_{\hat x}}{2}\right)},\;
s_b^{*}:=\sqrt{2k_{+}}e^{i\frac{\pi}4}\sin{\left(\frac{\pi-\theta_c-\theta_{\hat x}}{2}\right)}
\enn
and $H_{\theta}(s):={\sqrt{F^{(1)}_{\theta}(s)}\sqrt{F^{(2)}_{\theta}(s)}}\Big/{\sqrt{F^{(3)}_{\theta}(s)}}$
for $\theta \in (0,\pi)$ with $F^{(j)}_{\theta}(s)$ $(j=1,2,3)$ given by
\begin{align*}
& F^{(1)}_{\theta}(s) :=  1 + P(s)\cos\left(\frac{\theta-\theta_{\hat x}}{2}\right)
+ Q(s)\sin\left(\frac{\theta-\theta_{\hat x}}{2}\right),  \\
& F^{(2)}_{\theta}(s):=P(s)\sin\left(\frac{\theta+\theta_{\hat x}}{2}\right)
+Q (s)\cos\left(\frac{\theta+\theta_{\hat x}}{2}\right),  \\
& F^{(3)}_{\theta}(s) :=  \cos\left(\frac{\theta-\theta_{\hat x}}{2}\right)+P(s).
\end{align*}
Moreover, $H_{\theta_c}(s)$ and $H_{\pi-\theta_c}(s)$ are analytic in the strip
$L_{\sigma_{\theta_{\hat x}}}$ with $\sigma_{\theta_{\hat x}}:=\sqrt{k_+}
\min(\sin\left(({\theta_c+\theta_{\hat x}})/2\right),\cos{\left(({\theta_c-\theta_{\hat x}})/2\right)})$.

\item\label{e3}
For any $\theta_{\hat{x}}\in(0,\pi/2)$,
the function $H_{\theta_c}(s) H_{\pi-\theta_c}(s)\sqrt{s-s_b^{*}}$ is analytic in
$s\in L_{\sigma^{(1)}_{\theta_{\hat{x}}}}$ with
$\sigma^{(1)}_{\theta_{\hat{x}}}:=\min(\sigma_{\theta_{\hat{x}}},
\sqrt{k_+}\sin((\pi-\theta_c-\theta_{\hat{x}})/2))$
and the function $H_{\theta_c}(s) H_{\pi-\theta_c}(s)\sqrt{s-s_b^{*}}\sqrt{s-s_b}$
is analytic in $s\in L_{\sigma^{(2)}_{\theta_{\hat{x}}}}$ with
$\sigma^{(2)}_{\theta_{\hat{x}}}:=\sqrt{k_+}\vert\sin((\theta_c-\theta_{\hat{x}})/2)\vert$.
Moreover, we have
\be\label{eq31}
0<\sigma^{(1)}_{\min}\leq\sigma^{(1)}_{\theta_{\hat{x}}}\leq\sigma^{(1)}_{\max}<\sqrt{k_+}
\en
with $\sigma^{(1)}_{\min}:=\min(\sqrt{k_+}\sin(\theta_c/2),\sqrt{k_+}\sin(\pi/4-\theta_c/2))$
and $\sigma^{(1)}_{\max}:=\sqrt{k_+/2}$.
\end{enumerate}
\end{lemma}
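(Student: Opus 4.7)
For Part~(\ref{e1}), the plan is to first verify that $Q(s)=se^{-i\pi/4}/\sqrt{2k_+}$ maps $L_{\sqrt{k_+}}$ into $\mathbb C\setminus\{z\in\mathbb R:|z|\ge 1\}$, the domain in which $\arcsin$ is analytic. A direct computation shows that $Q(s)\in\mathbb R$ precisely when $s=te^{i\pi/4}$ for some $t\in\mathbb R$, and in that case $|\I(s)|=|t|/\sqrt 2$ while $|Q(s)|=|t|/\sqrt{2k_+}$; hence the constraint $|\I(s)|<\sqrt{k_+}$ forces $|Q(s)|<1$. This yields the analyticity of $\zeta(s)$, and then (\ref{neq:4.2.10}) follows from $\sin\circ\arcsin=\mathrm{id}$ and $\cos\circ\arcsin=\sqrt{1-(\cdot)^2}$ on the principal branch. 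For the bijection claim, substituting (\ref{neq:4.2.10}) into the identity (\ref{eq:12}) from Remark~\ref{re1} yields
\[
s \;=\; \sqrt{2k_+}\,e^{i\pi/4}\sin\!\left(\tfrac{\zeta-\theta_{\hat x}}{2}\right)\;=\;-\sqrt{k_+}\,\frac{\sinh(\I(\zeta))}{\sqrt{\cosh(\I(\zeta))}}\quad\text{for } \zeta\in\mathcal D,
\]
and the right-hand side is a strictly decreasing, real-valued function of $\I(\zeta)$ that sweeps $\mathbb R$ as $\I(\zeta)$ runs from $+\infty$ to $-\infty$; this matches the orientation of $\mathcal D$ described in Section~\ref{sec:2}.

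For Part~(\ref{e2}), the key algebraic step is a factorization of $\mathcal S^2(\cos\zeta,n)=\cos^2\zeta-n^2$. Using $n=\cos\theta_c$ together with the sum-to-product identities, one writes
\[
\cos^2\zeta-n^2 \;=\; -4\sin\!\tfrac{\zeta+\theta_c}{2}\cos\!\tfrac{\zeta+\theta_c}{2}\sin\!\tfrac{\zeta-\theta_c}{2}\cos\!\tfrac{\zeta-\theta_c}{2}.
\]
Substituting (\ref{neq:4.2.10}) expresses each half-angle factor as a linear combination of $P(s)$ and $Q(s)$. The two factors $\sin((\zeta-\theta_c)/2)$ and $\cos((\zeta+\theta_c)/2)$ vanish at $s=s_b$ and $s=s_b^*$ respectively, and an algebraic manipulation using $P^2+Q^2=1$ isolates these zeros in the closed form
\[
\sin\!\tfrac{\zeta-\theta_c}{2}=(s-s_b)\tfrac{e^{-i\pi/4}}{\sqrt{2k_+}}\tfrac{F^{(1)}_{\theta_c}(s)}{F^{(3)}_{\theta_c}(s)},\qquad \cos\!\tfrac{\zeta+\theta_c}{2}=-(s-s_b^*)\tfrac{e^{-i\pi/4}}{\sqrt{2k_+}}\tfrac{F^{(1)}_{\pi-\theta_c}(s)}{F^{(3)}_{\pi-\theta_c}(s)},
\]
while the remaining factors $\cos((\zeta-\theta_c)/2)$ and $\sin((\zeta+\theta_c)/2)$ coincide with $F^{(2)}_{\pi-\theta_c}(s)$ and $F^{(2)}_{\theta_c}(s)$. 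Multiplying everything together produces the square of the right-hand side of (\ref{neq:4.2.45}), and Lemma~\ref{Le:3.2.1} — applied after a direct check at $s=0$ (where $\zeta=\theta_{\hat x}$, $P=1$, $Q=0$) to fix the sign — promotes this squared identity to the signed one. For the case $\theta_{\hat x}\in(0,\theta_c)$, the identity (\ref{eq:1}) for $\widetilde{\mathcal S}$ differs only by a sign, which reflects the jump relation (\ref{eq8}) together with (\ref{eq:20}) governing how $\mathcal S$ and $\widetilde{\mathcal S}$ connect across the branch cut $\mathcal I^+_0$.

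For Part~(\ref{e3}), the strategy is to apply Lemma~\ref{Nle:1} individually to each $F^{(j)}_\theta$ with $\theta\in\{\theta_c,\pi-\theta_c\}$ and $j\in\{1,2,3\}$: reading off $(a_0,a_1,a_2)$ from the definitions, we check $a_1>0$ (which follows from $\theta_{\hat x}\in(0,\pi/2)$ and $\theta_c\in(0,\pi/2)$), compute $\mathsf w_0=a_1\sqrt{k_+}/\sqrt{a_1^2+a_2^2}$, and deduce $F^{(j)}_\theta(s)\in\mathbb C_0$ on an asymmetric strip containing $L_{\mathsf w_0}$; consequently $\sqrt{F^{(j)}_\theta}$ is analytic there. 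Taking the minimum of these quantities over $(j,\theta)$ delivers exactly $\sigma_{\theta_{\hat x}}=\sqrt{k_+}\min(\sin((\theta_c+\theta_{\hat x})/2),\cos((\theta_c-\theta_{\hat x})/2))$, establishing analyticity of $H_{\theta_c}(s)H_{\pi-\theta_c}(s)$ in $L_{\sigma_{\theta_{\hat x}}}$. For the wider strips $L_{\sigma^{(1)}_{\theta_{\hat x}}}$ and $L_{\sigma^{(2)}_{\theta_{\hat x}}}$, we would leverage the factorization from Part~(\ref{e2}): the only obstructions to analytically continuing $H_{\theta_c}H_{\pi-\theta_c}$ beyond $L_{\sigma_{\theta_{\hat x}}}$ originate from branch singularities at $s=s_b^*$ and $s=s_b$, and these are precisely cancelled upon multiplying by $\sqrt{s-s_b^*}$ and $\sqrt{s-s_b}$. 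The bounds (\ref{eq31}) then reduce to elementary trigonometric estimates of half-angles lying in $(0,\pi/2)$.

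The main difficulty is the sign bookkeeping in Part~(\ref{e2}): the right-hand sides of (\ref{neq:4.2.45}) and (\ref{eq:1}) are products of four square roots whose principal-branch conventions (for $z^{1/2}$, $\mathcal S_1$ and $\mathcal S_2$) do not a priori align with those inherited by $\mathcal S$ and $\widetilde{\mathcal S}$. Pinning down the global sign requires both the squared identity delivered by the algebra and Lemma~\ref{Le:3.2.1}, together with a careful pointwise check at $s=0$ that correctly accounts for the side of the branch cut $\mathcal I^+_0$ on which the parametrization lands in each of the two regimes $\theta_{\hat x}\gtrless\theta_c$ — precisely the mechanism through which the sign flip in (\ref{eq:1}) arises.
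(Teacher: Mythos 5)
Your handling of statement (\ref{e1}) and the algebraic core of statement (\ref{e2}) is correct and essentially the paper's route: the paper factors $\cos\zeta(s)\mp\cos\theta_c$ rather than the half-angle factors, but your displayed identities are equivalent (they check out using $P^2+Q^2=1$), and the mechanism you name --- squared identity, Lemma \ref{Le:3.2.1}, evaluation at $s=0$ --- is exactly what the paper does. The genuine gap in statement (\ref{e2}) is that Lemma \ref{Le:3.2.1} requires both sides to be analytic on a common strip about $\Rb$, and your plan never verifies this for the right-hand sides, which are compositions with $\zeta(s)$. For $\theta_{\hat x}\in(\theta_c,\pi/2)$ one must show that $\{\zeta(s):s\in L_{\varepsilon}\}$ avoids the branch cuts $\mathcal A_1\cup\mathcal A_2$ of $\mathcal S(\cos\zeta,n)$ for some $\varepsilon>0$ (the paper gets this from the positive distance of $\mathcal D$ to the cuts together with a Taylor/Cauchy-estimate argument); for $\theta_{\hat x}\in(0,\theta_c)$ the identity (\ref{eq:1}) is asserted on the whole strip $L_{\vert\I(s_b)\vert}$, so one needs the quantitative fact $\{\zeta(s):s\in L_{\vert\I(s_b)\vert}\}\cap(\mathcal A_2\cup\mathcal A_3)=\emptyset$, which the paper proves via the lower bound (\ref{eq:17}); this strip version is not a cosmetic detail, since it is what gets used later on complex paths (e.g.\ in Lemma \ref{Le:n4.2.2}). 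Also, (\ref{eq8}) and (\ref{eq:20}) play no role in the sign flip; as your final paragraph correctly indicates, the sign is fixed at $s=0$, where both the argument of $\sqrt{-s_b}$ and the value $\mathcal S(\cos\theta_{\hat x},n)=-i\sqrt{n^2-\cos^2\theta_{\hat x}}$ versus $\widetilde{\mathcal S}(\cos\theta_{\hat x},n)=\sqrt{\cos^2\theta_{\hat x}-n^2}$ change as $\theta_{\hat x}$ crosses $\theta_c$.

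For statement (\ref{e3}) your reasoning is backwards. Since $\sigma^{(2)}_{\theta_{\hat x}}\le\sigma^{(1)}_{\theta_{\hat x}}\le\sigma_{\theta_{\hat x}}$, the strips $L_{\sigma^{(1)}_{\theta_{\hat x}}}$ and $L_{\sigma^{(2)}_{\theta_{\hat x}}}$ are contained in $L_{\sigma_{\theta_{\hat x}}}$, not wider, so no continuation of $H_{\theta_c}H_{\pi-\theta_c}$ beyond $L_{\sigma_{\theta_{\hat x}}}$ is needed; and the proposed cancellation would not work anyway, because $s_b$ and $s_b^*$ are not singularities of $H_{\theta_c}H_{\pi-\theta_c}$ --- in your own factorization those zeros are carried by the explicit factors $(s-s_b)$ and $(s-s_b^*)$, while the branch points limiting $\sigma_{\theta_{\hat x}}$ are the zeros of $F^{(2)}_{\theta_c}$ and $F^{(2)}_{\pi-\theta_c}$, located at heights $-\sqrt{k_+}\sin((\theta_c+\theta_{\hat x})/2)$ and $\sqrt{k_+}\cos((\theta_c-\theta_{\hat x})/2)$. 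Multiplying by $\sqrt{s-s_b^*}$ and $\sqrt{s-s_b}$ therefore introduces branch points at $s_b^*$ and $s_b$ rather than removing any. The correct (and simpler) argument, which is the paper's, is to intersect the analyticity strips of $H_{\theta_c}H_{\pi-\theta_c}$ (namely $L_{\sigma_{\theta_{\hat x}}}$), of $\sqrt{s-s_b^*}$ (namely $L_{\vert\I(s_b^*)\vert}$) and of $\sqrt{s-s_b}$ (namely $L_{\vert\I(s_b)\vert}$); this yields exactly $\sigma^{(1)}_{\theta_{\hat x}}$ and $\sigma^{(2)}_{\theta_{\hat x}}$, after which (\ref{eq31}) is the elementary trigonometric estimate you describe.
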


\begin{proof}
(\ref{e1}) From Section 4.15 in \cite{OLBC10}, it can be seen that $\sin (z)$ is a conformal
mapping from the domain $\{z\in\Cb:-\pi/2<\Rt(z)<\pi/2\}$ to the domain $\Cb\backslash\{z\in\Rb:\vert z\vert \ge 1\}$
and its inverse is given by $\arcsin(z)$. Further, it is clear that $\Rt(\sqrt{z})>0$ for $z\in\Cb_0$
and $\Rt(\cos(z))>0$ for $z\in\{z\in\Cb:-\pi/2<\Rt(z)<\pi/2\}$. These facts imply that
$\zeta(s)$ is analytic in $ L_{\sqrt{k_{+}}}$ and the formula (\ref{neq:4.2.10}) holds true.
Moreover, from the formula (\ref{eq:12}), it follows that the mapping
$F(z):= \sqrt{2k_{+}}e^{i\frac{\pi}4}\sin{\left({(z-\theta_{\hat x})}/{2}\right)}$
is a bijection from $\mathcal D$ to $\mathbb{R}$
and $F(z)$ travels from $-\infty$ to $+\infty$ along the real axis as
$z$ travels from $-\pi/2 + \theta_{\hat x}+ i\infty $ to $\pi/2+\theta_{\hat x}-i\infty$
on the path $\mathcal D$. Hence, the proof of statement (\ref{e1}) is now completed
from the fact that $s= \sqrt{2k_{+}}e^{i\frac{\pi}4}\sin{\left({(\zeta(s)-\theta_{\hat x})}/{2}\right)}$
for $s\in\mathbb{R}$ (see the second formula in (\ref{neq:4.2.10})).

({\ref{e2}}) From Lemma \ref{Nle:1}, we have $F^{(3)}_{\theta_c}(s)\neq0$ for $s\in L_{\sqrt{k_{+}}}$.
Thus it follows from (\ref{neq:4.2.10}) that for $s\in L_{\sqrt{k_{+}}}$,
\begin{align*}
&\left(\sqrt{2k_{+}}e^{i{\pi}/{4}}\right)^{-1}\left(\cos \zeta(s)- \cos\theta_c\right)\\
&= \frac{4\sin\left(\frac{\theta_c- \zeta(s)}{4}\right)\cos\left(\frac{\theta_c
-2\theta_{\hat x}+ \zeta(s)}{4}\right)\cos^2\left(\frac{\theta_c- \zeta(s)}{4}\right)
\sin\left(\frac{\zeta(s) + \theta_{c}}{2}\right)}{\sqrt{2k_{+}}e^{i{\pi}/{4}}
\cos\left(\frac{\theta_c-2\theta_{\hat x}+\zeta(s)}{4}\right)\cos\left(\frac{\theta_c-\zeta(s)}{4}\right)}\\
& = \left(\sin \left(\frac{\zeta(s)- \theta_{\hat x}}{2}\right)
- \sin \left(\frac{\theta_c- \theta_{\hat x}}{2}\right)\right)\frac{\sqrt 2 e^{i\frac{3\pi}4}\left(1+\cos\left(\frac{\theta_{c}- \zeta(s)}{2}\right)\right)
\sin\left(\frac{\theta_c+ \zeta(s) }{2}\right)}{\sqrt{k_{+}}
\left(\cos\left(\frac{\theta_c- \theta_{\hat x}}{2}\right)
+\cos\left(\frac{\zeta(s)- \theta_{\hat x}}{2}\right)\right)} \\
&= i(s-s_b)\frac{F^{(1)}_{\theta_c}(s)F^{(2)}_{\theta_c}(s)}{F^{(3)}_{\theta_c}(s)}
k_{+}^{-1}.
\end{align*}
Similarly as above, for $s\in L_{\sqrt{k_{+}}}$, we have $F^{(3)}_{\pi-\theta_c}(s)\neq0$ and
\begin{align*}
\cos\zeta(s) + \cos\theta_c = (s-s^{*}_b)\frac{F^{(1)}_{\pi-\theta_c}(s)F^{(2)}_{\pi-\theta_c}(s)}
{F^{(3)}_{\pi-\theta_c}(s)}\frac{\sqrt2 e^{\frac{i3\pi}4}}{\sqrt{k_{+}}}.
\end{align*}
Using Lemma \ref{Nle:1} again, we obtain that $\sqrt{F^{(j)}_{\theta_c}(s)}$ and
$\sqrt{F^{(j)}_{\pi-\theta_c}(s)}$ $(j =1,2,3)$ are analytic functions in the strip
$L_{\sigma_{\theta_{\hat x}}}$. Thus, it follows that $H_{\theta_c}(s)$ and
$H_{\pi-\theta_c}(s)$ are analytic in $L_{\sigma_{\theta_{\hat x}}}$.

Now we prove that the formula (\ref{neq:4.2.45}) holds for the case $\theta_{\hat x}\in(\theta_c,\pi/2)$.
From Lemma \ref{Nle:1} and (\ref{neq:4.2.10}), it follows that $\zeta'(s)$ is
analytic in $s \in  L_{\sqrt{k_+}}$ and is given by
\be\label{eq10}
\zeta'(s)= {\sqrt {2/k_+}e^{-i\pi/4}}/P(s),~s\in L_{\sqrt{k_+}}.
\en
This, together with the definition of $P(s)$, implies that
$\vert\zeta'(s)\vert$ is uniformly bounded for $s \in  L_{\sqrt{k_+}/2}$.
Further, for any fixed $\theta_{\hat{x}}\in(\theta_c,\pi/2)$, it can be seen that
$\mathcal D=\{\zeta(s):s\in \mathbb{R}\}\subset II$ and the distance between
$\mathcal D$ and the boundary of $II$ has a positive lower bound, where $II$ is defined as in
the proof of Lemma \ref{Le:n4.2.1}.
Thus, it easily follows from Taylor formula and Cauchy inequality (see,
e.g., \cite[Corollary 4.3]{Stein}) that, for any fixed $\theta_{\hat{x}}\in(\theta_c,\pi/2)$,
there exists small enough $\varepsilon_{\theta_{\hat x}}>0$
such that
$\{\zeta(s):s\in L_{\varepsilon_{\theta_{\hat x}}}\}\subset II$.
By this and Remark \ref{re2}, we obtain that $\mathcal S(\cos(\zeta(s)),n)$ with
$\theta_{\hat{x}}\in(\theta_c,\pi/2)$ is analytic in $s\in L_{\varepsilon_{\theta_{\hat x}}}$.
Clearly, $\sqrt{s-s_b}$ and $\sqrt{s-s_b^{*}}$ are analytic in $s\in L_{\vert\I(s_b)\vert}$ and
in $s\in L_{\vert\I(s^{*}_b)\vert}$, respectively. Moreover, by a straightforward calculation, we have
\begin{align}
&\frac{\sqrt{2}}{\sqrt{k_+}e^{i\frac{\pi}{4}}}\sqrt{-s_b}\sqrt{-s_b^{*}}H_{\theta_c}(0)H_{\pi-\theta_c}(0)
\notag\\
&=\mathcal S(\cos(\zeta(0)),n)=-i\sqrt{n^2-\cos^2\theta_{\hat x}}
\ne 0 \quad \textrm{for}~\theta_{\hat x} \in (\theta_c, \pi/2). \label{neq:4.2.40}
\end{align}
Hence, using the above arguments and applying Lemma \ref{Le:3.2.1}, we obtain that the
formula (\ref{neq:4.2.45}) holds for the case $\theta_{\hat x}\in(\theta_c,\pi/2)$.

Next, we prove that the formula (\ref{eq:1}) holds for the case $\theta_{\hat x}\in(0,\theta_c)$.
Let $\theta_{\hat x}\in(0,\theta_c)$ in the rest proof of statement (\ref{e2}).
Denote by $q_1(z):=\sin\left(({z-\theta_{\hat x}})/{2}\right)$  and
$q_2(z):=\cos\left(({z-\theta_{\hat x}})/{2}\right)$. A straightforward calculation gives that
\begin{align}\label{eq:16}
&\Ima \bigg(\sqrt{2k_{+}}e^{i\frac\pi 4}q_1(z)\bigg)=\notag\\
&\qquad \qquad
\sqrt{k_+}\left(q_1(\Rt(z))
\cosh\left(\frac{\Ima(z)}{2}\right) + q_2(\Rt(z))\sinh\left(\frac{\Ima(z)}{2}\right)\right).
\end{align}
It is easily seen that $q_1(\Rt(z))\cosh\left({\Ima(z)}/{2}\right)$
and $q_2(\Rt(z))\sinh\left({\Ima(z)}/{2}\right)$ are both nonnegative or both
nonpositive in $z\in{\mathcal A_2}\cup{\mathcal A_3}$, where $\mathcal A_2$
and $\mathcal A_3$ are defined in Remark \ref{re2}. This, together with  (\ref{eq:16}), implies that
\begin{align}
&\bigg\vert\Ima \bigg(\sqrt{2k_{+}}e^{i\frac\pi 4}q_1(z)\bigg)\bigg\vert \ge \sqrt{k_+}\left\vert q_1(\Rt(z))\cosh\left(\frac{\Ima(z)}{2}\right)\right\vert \notag \\
&\ge\sqrt{k_+} \min_{z \in {\mathcal {\widetilde I}}^{+}_{0}}\left\vert q_1(\Rt(z))\right\vert
\min_{z \in {\mathcal {\widetilde I}}^{+}_{0}}\left\vert \cosh\left(\frac{\Ima(z)}{2}\right) \right\vert
\ge \vert\I(s_b)\vert\quad\textrm{for}~z \in \mathcal A_2\cup \mathcal A_3.\label{eq:17}
\end{align}
Thus, combining (\ref{eq:17}) and the second formula in (\ref{neq:4.2.10}), we deduce that
\ben
\left\{\zeta(s):s\in L_{\vert\I(s_b)\vert}\right\}\cap\left(\mathcal A_2\cup \mathcal A_3\right)=\emptyset.
\enn
Hence, it follows from Remark \ref{re2} that $\widetilde{\mathcal S}(\cos(\zeta(s)),n)$
is analytic in $s\in L_{\vert\I(s_b)\vert}$. Further, it is easy to verify that
\begin{align*}
&-\sqrt{2/k_{+}}e^{-i\frac{\pi}{4}}\sqrt{-s_b}\sqrt{-s_b^{*}}H_{\theta_c}(0)H_{\pi-\theta_c}(0)
= \widetilde{\mathcal S}(\cos(\zeta(0)),n)=\sqrt{\cos^2\theta_{\hat x}-n^2}
\ne 0
\end{align*}
for $\theta_{\hat x} \in (0,\theta_c)$. Therefore, employing Lemma \ref{Le:3.2.1} again, we obtain that the formula (\ref{eq:1})
holds for the case $\theta_{\hat x} \in(0,\theta_c)$.

(\ref{e3}) For $\theta_{\hat{x}}\in(0,\pi/2)$, it is clear
that $\sigma^{(2)}_{\theta_{\hat{x}}}\leq \sigma^{(1)}_{\theta_{\hat{x}}}$
and $\sigma^{(1)}_{\theta_{\hat{x}}}=\sqrt{k_+}\min(\sin((\theta_c+\theta_{\hat{x}})/2),
\cos((\theta_c+\theta_{\hat{x}})/2))$. Thus by statement (\ref{e2}), the analyticity of $\sqrt{s-s_b}$
and $\sqrt{s-s^*_b}$, and a direct calculation, we can obtain that statement (\ref{e3}) holds.
\end{proof}

With the aid of Lemmas  \ref{NLL1} and \ref{le:4},
we are now ready to study the uniform far-field asymptotic estimates of the functions
$G^{(j)}_{\mathcal R}(x,y)$ $(j=1,2,3,4)$ defined in Lemma \ref{Le:n4.2.1}.
{\color{hw}
We mention that in the proofs of Lemmas \ref{NLe:3.2.4} and \ref{Le:n4.2.2} below,
an important role is played by the technically involved singularity analysis of
the relevant integrals
for $G_{\mathcal{R}}(x,y)$ when the angle $\theta_{\hat{x}}$ is very close to the branch point $\theta_c$.
}

\begin{lemma} \label{NLe:3.2.4}
Assume that $k_+>k_-$ and let $R_0>0$ be an arbitrary fixed number.
Suppose that $y = \vert y \vert(\cos \theta_{\hat y}, \sin \theta_{\hat y})\in B^+_{R_0}$ with
$\theta_{\hat y}\in (0,\pi)$ and $x=\vert x \vert(\cos\theta_{\hat x}, \sin \theta_{\hat x})\in\Rb^2_+$
with  $\theta_{\hat x}\in \left[\left.\theta_c,{\pi}/2\right)\right.$, then
we have the asymptotic behavior
\begin{align}\label{j:e25}
G^{(1)}_{\mathcal R}(x,y) = %\notag\\
\frac{e^{ik_{+}\vert x \vert}}{\sqrt{\vert x \vert}}\frac{e^{i\frac{\pi}4}}
{\sqrt{8\pi k_{+}}}\left(\frac{2\cos^2\theta_{\hat x}-1-n^2}{n^2-1}\right)
e^{-i k_{+}\vert y \vert\cos(\theta_{\hat x}{+}\theta_{\hat y})}+ G^{(1)}_{\mathcal R,Res}(x,y)
\end{align}
and
\begin{align}\label{eq:3.2.11}
 G^{(2)}_{\mathcal R}(x,y) =%\notag\\
 \frac{e^{ik_{+}\vert x \vert}}{\sqrt{\vert x \vert}}\frac{e^{i\frac{\pi}4}}
{\sqrt{8\pi k_{+}}}\left(\frac{2i\sin\theta_{\hat x}\mathcal S(\cos\theta_{\hat x},n)}{n^2-1}\right)
e^{-ik_{+}\vert y \vert\cos(\theta_{\hat x}+\theta_{\hat y})} + G^{(2)}_{\mathcal R,Res}(x,y),
\end{align}
where $G^{(1)}_{\mathcal R,Res}(x,y)$ and $G^{(2)}_{\mathcal R,Res}(x,y)$ satisfy that
\begin{align} \label{neq:4.2.32}
&\vert G^{(1)}_{\mathcal R,Res}(x,y)\vert\le {C_{R_0}}{\vert x \vert^{- 3/2}},&&\vert x \vert\rightarrow+\infty,\\ \label{neq:4.2.23}
&\vert G^{(2)}_{\mathcal R,Res}(x,y)\vert\le {C_{R_0}}{\vert x \vert^{- 3/4}}, &&\vert x \vert\rightarrow+\infty,
\end{align}
uniformly for all $\theta_{\hat x}\in \left[\left.\theta_c,{\pi}/2\right)\right.$
and $y\in B^+_{R_0}$, and that
\begin{align}
&\vert G^{(2)}_{\mathcal R,Res}(x,y)\vert \le {C_{R_0}}{{{\left\vert
\theta_c-\theta_{\hat x}
\right\vert}^{-\frac 32}}\vert x \vert^{-\frac 32}}, &&\vert x \vert\rightarrow+\infty, \label{neq:4.2.22}
\end{align}
uniformly for all $\theta_{\hat x}\in \left(\theta_c,{\pi}/2\right)$  and $y\in B^+_{R_0}$.
Here, the constant $C_{R_0}>0$ is independent of $x$ and $y$ but dependent of $R_0$.
\end{lemma}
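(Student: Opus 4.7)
The plan is to apply the steepest-descent substitution $\zeta=\zeta(s)$ from Lemma \ref{le:4}(\ref{e1}) and reduce both contour integrals to Gaussian-type integrals on the real line. The essential identity, derived from (\ref{neq:4.2.10}) together with $Q(s)^2=-is^2/(2k_+)$, is $\cos(\zeta(s)-\theta_{\hat x})=1-2Q(s)^2=1+is^2/k_+$, hence $ik_+\vert x\vert\cos(\zeta(s)-\theta_{\hat x})=ik_+\vert x\vert-\vert x\vert s^2$. Combined with $d\zeta=\zeta'(s)\,ds$ and $\zeta'(s)=\sqrt{2/k_+}\,e^{-i\pi/4}/P(s)$, both $G^{(1)}_{\mathcal R}$ and $G^{(2)}_{\mathcal R}$ take the common form
$$c\,e^{ik_+\vert x\vert}\int_{-\infty}^{+\infty}\Phi(s;\theta_{\hat x},y)\,e^{-\vert x\vert s^2}\,ds.$$
The task then reduces to extracting the saddle-point contribution at $s=0$ and bounding the residual uniformly in $\theta_{\hat x}$ and $y$.

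For $G^{(1)}_{\mathcal R}$, the factor $\Phi$ is entire in $s$ (since $(\cos 2\zeta-n^2)/(n^2-1)$ is smooth and $P,Q,\zeta$ are analytic on a fixed strip) with bounds uniform in $(\theta_{\hat x},y)\in[\theta_c,\pi/2)\times B^+_{R_0}$. Taylor-expanding $\Phi(s)=\Phi(0)+s\Phi'(0)+s^2\Phi_2(s)$, the linear term vanishes against the even Gaussian, the constant term contributes $\Phi(0)\sqrt{\pi/\vert x\vert}$ which, evaluated at $\zeta(0)=\theta_{\hat x}$, reproduces exactly the leading coefficient in (\ref{j:e25}), and the quadratic remainder is bounded by $C\vert x\vert^{-3/2}$ with $C$ controlled by Cauchy inequalities applied to $\Phi$ on a strip of fixed width. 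This proves (\ref{j:e25}) and (\ref{neq:4.2.32}).

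For $G^{(2)}_{\mathcal R}$ the complication is the branch point $s_b$ of $\sqrt{s-s_b}$, which approaches the saddle $s=0$ as $\theta_{\hat x}\to\theta_c^+$. Using (\ref{neq:4.2.45}) and Lemma \ref{le:4}(\ref{e3}), one rewrites
$$G^{(2)}_{\mathcal R}(x,y)=c_0\,e^{ik_+\vert x\vert}\int_{-\infty}^{+\infty}g(s;\theta_{\hat x},y)\sqrt{s-s_b}\,e^{-\vert x\vert s^2}\,ds,$$
where $g$ absorbs the analytic factors $\sin\zeta(s)/P(s)$, $H_{\theta_c}(s)H_{\pi-\theta_c}(s)\sqrt{s-s_b^{*}}$ and $e^{-ik_+\vert y\vert\cos(\zeta(s)+\theta_{\hat y})}$. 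By Lemma \ref{le:4}(\ref{e3}), $g$ and its derivative are uniformly bounded on a strip whose width stays above $\sigma^{(1)}_{\min}>0$ independently of $\theta_{\hat x}\in[\theta_c,\pi/2)$ and $y\in B^+_{R_0}$. Decompose $g(s)=g(0)+sh(s)$ with $h$ analytic and uniformly bounded. The principal integral $g(0)\int\sqrt{s-s_b}\,e^{-\vert x\vert s^2}ds$ equals $g(0)\,F_2(i\vert x\vert,s_b,1/2)$ and is evaluated in closed form by (\ref{neq:4.2.33}) (which applies since $\I(s_b)\le 0$, extended by continuity to $s_b=0$ at $\theta_{\hat x}=\theta_c$), yielding an explicit multiple of $\vert x\vert^{-3/4}D_{1/2}(i\sqrt{2\vert x\vert}\,s_b)$. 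The large-argument asymptotic $D_{1/2}(z)=z^{1/2}e^{-z^2/4}[1+O(z^{-2})]$ combined with (\ref{neq:4.2.40}) recovers the leading $\vert x\vert^{-1/2}$ term of (\ref{eq:3.2.11}). The $sh(s)$-contribution is, via $s\sqrt{s-s_b}=(s-s_b)^{3/2}+s_b\sqrt{s-s_b}$, reduced to an $F_2$ integral with $\beta=3/2$ and an $|s_b|$-weighted $F_2$ with $\beta=1/2$, both strictly subordinate.

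The principal obstacle lies in extracting the two distinct residual bounds (\ref{neq:4.2.22}) and (\ref{neq:4.2.23}) from the resulting parabolic-cylinder-function expressions in one uniform statement. Using the sharp bound $|D_\nu(w)|\le C(1+|w|)^{\nu}$ for $\nu\in\{-1/2,-3/2\}$ valid on the relevant angular sector (note $w=i\sqrt{2\vert x\vert}\,s_b$ satisfies $\arg w=-\pi/4$ up to sign of $\sin((\theta_c-\theta_{\hat x})/2)$), the total residual takes the schematic form $C\vert x\vert^{-3/2}(1+\sqrt{\vert x\vert}|s_b|)^{-3/2}$. When $\theta_{\hat x}$ is bounded away from $\theta_c$, $|s_b|$ is comparable to $|\theta_c-\theta_{\hat x}|$ and $\sqrt{\vert x\vert}|s_b|\gtrsim 1$ for large $\vert x\vert$, producing (\ref{neq:4.2.22}); in the transition regime $\sqrt{\vert x\vert}|s_b|\lesssim 1$ the parabolic-cylinder factor is uniformly bounded and only the $\vert x\vert^{-3/4}$ prefactor survives, yielding (\ref{neq:4.2.23}) uniformly — including at $\theta_{\hat x}=\theta_c$ itself, where the leading $\vert x\vert^{-1/2}$ coefficient in (\ref{eq:3.2.11}) vanishes thanks to $\mathcal S(\cos\theta_c,n)=0$. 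The bulk of the technical work is the careful bookkeeping across these two regimes and the verification of the uniform-in-$(\theta_{\hat x},y)$ analyticity and boundedness of $g$ and $h$ on a fixed strip via Lemma \ref{le:4}(\ref{e3}).
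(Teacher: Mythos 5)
Your route is essentially the paper's (the substitution $\zeta=\zeta(s)$ of Lemma \ref{le:4}, splitting off $\sqrt{s-s_b}$ via (\ref{neq:4.2.45}), a Gaussian saddle expansion for $G^{(1)}_{\mathcal R}$, and the parabolic-cylinder evaluation through (\ref{neq:4.2.33}) when $\sqrt{\vert x\vert}\,\vert s_b\vert\lesssim 1$), but the step you rely on for (\ref{neq:4.2.22}) would fail as written. After the split $g(s)=g(0)+sh(s)$ you claim the $sh$-contribution, rewritten via $s\sqrt{s-s_b}=(s-s_b)^{3/2}+s_b\sqrt{s-s_b}$, gives two pieces that are "strictly subordinate". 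Each of those pieces is of size $\vert s_b\vert^{3/2}\vert x\vert^{-1/2}$, which for fixed $\theta_{\hat x}\ne\theta_c$ exceeds the allowed residual $\vert\theta_c-\theta_{\hat x}\vert^{-3/2}\vert x\vert^{-3/2}$ by the unbounded factor $\vert s_b\vert^{3}\vert x\vert$ as $\vert x\vert\to\infty$; only their \emph{sum} is admissible, through the exact cancellation $(-s_b)^{3/2}+s_b(-s_b)^{1/2}=0$ of the leading saddle contributions — and since $h$ is not constant, the two integrals are not $F_2$ integrals at all, so you would first have to expand $h$ once more before that cancellation is even available. So separate estimates cannot produce (\ref{neq:4.2.22}); you need either to make this cancellation explicit, or to argue as the paper does: subtract $G(0)+G'(0)s$ from $G(s)=\sqrt{s-s_b}\,g(s)$, let the linear term vanish by oddness, and control the second-order remainder using the lower bound $\vert s-s_b\vert\ge\sigma^{(2)}_{\theta_{\hat x}}\sim\vert\theta_c-\theta_{\hat x}\vert$ on the real axis (the mean-value/Cauchy argument leading to (\ref{neq:4.2.21})).

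Two further points. First, your unified schematic bound $C\vert x\vert^{-3/2}(1+\sqrt{\vert x\vert}\vert s_b\vert)^{-3/2}$ is wrong: at $\theta_{\hat x}=\theta_c$ it would give $O(\vert x\vert^{-3/2})$, whereas $g(0)=h_c(\theta_c)\ne 0$ (see (\ref{eq14})) together with $\mathcal S(\cos\theta_c,n)=0$ forces $G^{(2)}_{\mathcal R,Res}$ to be genuinely of order $\vert x\vert^{-3/4}$ there — this is exactly the sharpness statement (\ref{eq49}) in Remark \ref{re3}; the correct prefactor is $\vert x\vert^{-3/4}$, as your own next sentence implicitly concedes. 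Second, the claim that $\Phi$, $g$ and $h$ are uniformly bounded on a strip is false: $\cos(\zeta(s)+\theta_{\hat y})$ grows quadratically in $\Rt(s)$, so these factors grow like $(1+\vert s\vert)^{5/2}e^{C_{R_0}\vert s\vert^2}$ (and $\Phi$ is analytic only on $L_{\sqrt{k_+}}$, not entire); the tails must be absorbed by $e^{-\vert x\vert s^2}$ for large $\vert x\vert$, which is what the growth estimates (\ref{Ne3}), (\ref{Ne16}) and the splitting into a near-saddle integral and an exponentially small tail accomplish in the paper. These last items are repairable, but they are needed for the asserted uniformity in $\theta_{\hat x}$ and $y\in B^+_{R_0}$.
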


\begin{proof}
Let $\vert x \vert$ be sufficiently large throughout the proof.
As mentioned in Remark \ref{re2}, the steepest descent path $\mathcal{D}$ will not cross
the branch cuts of $\mathcal S(\cos \zeta, n)$
for the case $\theta_{\hat{x}}\in(\theta_c,\pi/2)$, and will cross the
branch cuts of $\mathcal S(\cos \zeta, n)$
with only one intersection point $\theta_c$ for the case $\theta_{\hat{x}}= \theta_c$.
Consequently, we will apply the steepest descent method (see, e.g., \cite{B84,bg_2,CH95})
to rewrite $G^{(j)}_{\mathcal R}(x,y)$ $(j=1,2)$ as integrals over the real axis.
By statement (\ref{e1}) of Lemma \ref{le:4}, we introduce the change of variable $\zeta = \zeta (s)$
to rewrite $G^{(j)}_{\mathcal R}(x,y)$ $(j = 1,2)$ as
\begin{align}\label{eq16}
G^{(j)}_{\mathcal R}(x,y)= \frac{ie^{i k_{+}\vert x \vert}}{4\pi}\int^{+\infty}_{-\infty} v_j(s)F(s)\frac{d\zeta(s)}{ds}e^{-\vert x \vert s^2}ds,
\end{align}
where $\zeta(s)$ is defined as in Lemma \ref{le:4} and
\begin{align}
&v_1(s):=\frac{\cos(2\zeta(s))-n^2}{n^2-1},  ~v_2(s)
:=\frac{2i\sin\zeta(s)\mathcal S(\cos(\zeta(s)),n)}{n^2-1},\notag\\
&F(s):= e^{-i k_{+}\vert y \vert\cos(\zeta(s)+\theta_{\hat y})}.\label{eq:3.2.3}
\end{align}

Next, the proof is divided into two steps. The first step is to estimate
$G^{(2)}_{\mathcal R}(x,y)$ and the second step is to estimate $G^{(1)}_{\mathcal R}(x,y)$.

\textbf{Step 1}: Estimates of $G^{(2)}_{\mathcal R}(x,y)$.
From Lemma \ref{le:4} and formula (\ref{eq16}), it is easily seen that
$G^{(2)}_{\mathcal R}(x,y)$ can be rewritten as
\be\label{Ne5}
G^{(2)}_{\mathcal R}(x,y)=\frac{ie^{ik_{+}\vert x \vert}}{4\pi}\int^{+\infty}_{-\infty}\sqrt{s-s_b}g(s)e^{-\vert x \vert s^2}ds,
\en
where $g(s)$ is given by
\begin{align} \label{eq:18}
g(s):= \sqrt {2/k_{+}}e^{-i\frac{\pi}{4}}H_{\theta_c}(s) H_{\pi-\theta_c}(s)\sqrt{s-s_b^{*}}
F(s)\frac{d\zeta(s)}{ds} \frac{2i\sin\zeta(s)}{n^2-1}
\end{align}
and is an analytic function in the strip $L_{\sigma^{(1)}_{\theta_{\hat{x}}}}$ with
$\sigma^{(1)}_{\theta_{\hat{x}}}$ defined in Lemma \ref{le:4}.
For $s\in L_{\sqrt{k_+}}$, with the aid of (\ref{neq:4.2.10}), we have
\ben
\sin(\zeta(s)-\theta_{\hat x})=2P(s)Q(s),\;\;\cos(\zeta(s)-\theta_{\hat x})=2P^2(s)-1,
\enn
which implies that
\begin{align*}
&\sin(\zeta(s)) =  2P(s)Q(s) \cos\theta_{\hat x} + (2P^2(s)-1) \sin\theta_{\hat x}, \\
&\cos(\zeta(s)+\theta_y) = {(2P^2(s)-1)\cos(\theta_{\hat x}+ \theta_{\hat y})
- 2P(s)Q(s) \sin(\theta_{\hat x}+ \theta_{\hat y})}.
\end{align*}
Note that $\Rt(P(s))>0$ in $s\in L_{\sqrt{k_+}}$,
$\min\left[\cos\left({(\theta_c-\theta_{\hat x})}/{2}\right),
\cos\left({(\pi-\theta_c-\theta_{\hat x})}/{2}\right)
\right]\geq\sin(\theta_c/2)$
and $\sigma^{(1)}_{\theta_{\hat{x}}}\leq\sigma^{(1)}_{\max}<\sqrt{k_+}$ (see statement (\ref{e3}) of Lemma \ref{le:4}).
Thus we have
$\vert F^{(3)}_{\theta_c}(s)\vert\geq C(1+\vert s\vert )$,
$\vert F^{(3)}_{\pi-\theta_c}(s)\vert \geq C(1+\vert s\vert )$ and
$C(1+\vert s\vert )\le\vert P(s)\vert \le 2(1+\vert s\vert /\sqrt{2k_{+}})$
in $s\in  L_{\sigma^{(1)}_{\theta_{\hat{x}}}}$ for some constant $C>0$.
Hence, by using the formula (\ref{eq10}), we have: for any $ \vert y \vert < R_0 $,
\be\label{Ne3}
\vert g(s)\vert \le C_{R_0}{(1+ \vert s \vert)}^{{5}/{2}}e^{\widetilde{C}_{R_0}{\vert s\vert}^2},\quad
s\in  L_{\sigma^{(1)}_{\theta_{\hat{x}}}},
\en
where the positive constants $C_{R_0}$ and $\widetilde{C}_{R_0}$ are independent of $\theta_{\hat x}$.
Then it follows from  Cauchy inequality (see, e.g., \cite[Corollary 4.3]{Stein}), (\ref{eq31})
and the estimate (\ref{Ne3}) that for $n=0,1,2,\ldots$,
\begin{align}\label{Ne16}
\left\vert\frac{d^n g}{d s^n}(s)\right\vert&\le \frac{C_{R_0} n!{(1+\sigma^{(1)}_{\theta_{\hat{x}}}
+ \vert s\vert )}^{5/2}e^{\widetilde{C}_{R_0}(\sigma^{(1)}_{\theta_{\hat{x}}}+\vert s\vert )^2}}
{(\sigma^{(1)}_{\theta_{\hat{x}}})^{n}}\notag\\
&\leq \frac{C_{R_0} n!{(1+ \vert s\vert )}^{5/2}
e^{\widetilde{C}_{R_0}\vert s\vert^2}}{(\sigma^{(1)}_{\theta_{\hat{x}}})^{n}},
\qquad s \in\Rb.
\end{align}

The rest proof of this step is divided into the following two parts.

\textbf{Part I}: We prove that $G^{(2)}_{\mathcal R}(x,y)$ has the form (\ref{eq:3.2.11})
with $G^{(2)}_{\mathcal R,Res}(x,y)$ satisfying (\ref{neq:4.2.22}) uniformly for all
$\theta_{\hat{x}}\in(\theta_c,\pi/2)$ and $y\in B^+_{R_0}$.

Assume $\theta_{\hat{x}}\in\left(\theta_c,\pi/2\right)$.
Let $G(s):=\sqrt{s-s_b}g(s)$ and choose $\sigma=\sigma^{(2)}_{\theta_{\hat{x}}}$ with
$\sigma^{(2)}_{\theta_{\hat{x}}}$ defined in Lemma \ref{le:4}. By statement (\ref{e3}) of Lemma \ref{le:4},
$G(s)$ is an analytic function in the strip $L_\sigma$. Define the function $J(s):= G(s)-G(0)-G{'}(0)s$.
Then it easily follows from (\ref{neq:4.2.40}) and (\ref{Ne5}) that
\begin{align}
&G^{(2)}_{\mathcal R}(x,y)= \frac{ie^{ik_{+}\vert x \vert}}{4\pi}\left(\int^{+\infty}_{-\infty}
G(0)e^{-\vert x \vert s^2}ds + \int^{+\infty}_{-\infty} J(s)e^{-\vert x \vert s^2}ds\right)\nonumber\\
&=\frac{e^{ik_{+}\vert x \vert}}{\sqrt{\vert x \vert}}\frac{e^{i\frac{\pi}4}}{\sqrt{8\pi k_{+}}}
\frac{2i\mathcal S(\cos\theta_{\hat x},n)}{n^2-1}\sin\theta_{\hat x} e^{-i k_{+}\vert y \vert
\cos(\theta_{\hat x}+\theta_{\hat y})}+
\frac{ie^{ik_{+}\vert x \vert}}{4\pi} \left(I_1+I_2\right),\label{eq11}
\end{align}
where $I_1$ and $I_2$ are given by
\ben
I_1:=\int_{-\sqrt{\sigma}}^{\sqrt{\sigma}}J(s)e^{-\vert x \vert s^2}ds,\quad
I_2:=\int_{\sqrt \sigma}^{+\infty}\left(J(s)+J(-s)\right)e^{-\vert x \vert s^2}ds.
\enn

Next, we estimate $I_1$. By mean-value theorem, we have: for any $s\in\Rb$, there exist
$\alpha_1,\alpha_2\in[0,1]$ such that
\ben
\Rt(J(s))=\Rt\bigg(\sqrt{\alpha_1 s-s_b}g''(\alpha_1 s)+ \frac{g{'}(\alpha_1 s)}
{\sqrt{\alpha_1 s-s_b}}-\frac{g(\alpha_1 s)}{4(\alpha_1 s -s_b)^{\frac 32}}\bigg)s^2
\enn
and
\ben
\I(J(s))=\I\bigg(\sqrt{\alpha_2 s-s_b}g{''}(\alpha_2 s)+ \frac{g{'}(\alpha_2 s)}
{\sqrt{\alpha_2 s-s_b}}-\frac{g(\alpha_2 s)}{4(\alpha_2 s -s_b)^{\frac 32}}\bigg)s^2.
\enn
Here, we note that $\sqrt{\alpha_j s-s_b}\neq 0$ $(j=1,2)$ due to $\I (s_b)<0$.
Further, it is easy to verify that $\vert s-s_b \vert \ge \sigma$ for $s \in \Rb$ and that $\sigma<\sqrt{k_+}$.
These, together with (\ref{eq31}), (\ref{Ne3}) and (\ref{Ne16}), yield that
\ben
\vert J(s) \vert \le {C_{R_0}}\left(1+{{\sigma}^{-1/2}}+{{\sigma}^{-3/2}}\right) s^2
\le {C_{R_0}}{{\sigma}^{-3/2}} s^2,\; s\in(-\sqrt\sigma,\sqrt\sigma).
\enn
Hence, we get
\be \label{neq:4.2.21}
\vert I_1\vert \le  \frac{C_{R_0}}{{\sigma}^{3/2}} \int^{+\infty}_{-\infty} s^2e^{-\vert x \vert s^2}ds
= \frac{C_{R_0}{\Gamma(\frac 3 2)}}{{{2}\sigma}^{3/2}\vert x \vert^{\frac 32}}.
\en

For the estimate of $I_2$, we introduce the following function
\ben
\Psi(t):= \int^{t}_{\sqrt{\sigma}}(J(s)+J(-s))e^{-r_0s^2}ds,
\enn
where $r_0$ is a fixed positive constant satisfying $r_0> 2\widetilde{C}_{R_0}$ with
$\widetilde{C}_{R_0}$ given in (\ref{Ne3}). By (\ref{Ne3}), it is deduced that
\begin{align*}
\sup_{t>\sqrt{\sigma}}\vert\Psi(t)\vert &\le 2\int^{+\infty}_{-\infty}\left(\vert G(s)\vert +\vert G(0)\vert \right)e^{-r_0s^2}ds\\
&\leq\int^{+\infty}_{-\infty}{C_{R_0}}{(1+\vert s\vert )}^{3}e^{-(r_0-\widetilde{C}_{R_0})s^2}ds
\le {C_{R_0}}.
\end{align*}
From this it follows that for $\vert x \vert$ large enough,
\begin{align}
\vert I_2\vert &=\left\vert \int^{+\infty}_{\sqrt{\sigma}}{\Psi{'}(s)}e^{-(\vert x \vert-r_0)s^2}ds\right\vert
 = \left\vert 2\int^{+\infty}_{\sqrt{\sigma}}(\vert x \vert-r_0)s \Psi(s) e^{-(\vert x \vert-r_0)s^2}ds\right\vert \notag \\
& \le {C_{R_0}}\int^{+\infty}_{\sqrt{\sigma}}2(\vert x \vert-r_0)s e^{-(\vert x \vert-r_0)s^2}ds
\leq {C_{R_0}}e^{{-(\vert x \vert-r_0)}{\sigma}}\leq {C_{R_0}}e^{-\vert x \vert{\sigma}}.\label{neq:4.2.20}
\end{align}
Thus, in terms of (\ref{eq:3.2.11}) and (\ref{eq11}), we can apply (\ref{neq:4.2.21})
and (\ref{neq:4.2.20}) to obtain that
\begin{align}\label{eq32}
\left\vert G^{(2)}_{\mathcal R,Res}(x,y)\right\vert &=\left\vert \left(I_1+I_2\right)/(4\pi)\right\vert
\leq C_{R_0}\left((\sigma\vert x \vert)^{-3/2}+e^{-\vert x \vert\sigma}\right)\notag\\
&\leq
\frac{C_{R_0}}{{{\left\vert \sin\left({(\theta_c-\theta_{\hat x})}/2\right)\right\vert}^{\frac 32}}\vert x \vert^{\frac 32}}
\end{align}
for large enough $\vert x \vert$. By using this and (\ref{eq11}) it is easily obtained that
$G^{(2)}_{\mathcal R}(x,y)$ has the form (\ref{eq:3.2.11}) with $G^{(2)}_{\mathcal R,Res}(x,y)$
satisfying (\ref{neq:4.2.22}) uniformly for all $\theta_{\hat{x}}\in(\theta_c,\pi/2)$ and $y\in B^+_{R_0}$.

\textbf{Part II}: We prove that $G^{(2)}_{\mathcal R}(x,y)$ has the form (\ref{eq:3.2.11})
with $G^{(2)}_{\mathcal R,Res}(x,y)$ satisfying (\ref{neq:4.2.23}) uniformly for all
$\theta_{\hat x}\in \left[\left.\theta_c,{\pi}/2\right)\right.$ and $y\in B^+_{R_0}$.
To do this, we consider the following three cases.

\textbf{Case 1}: $\theta_{\hat{x}}\in(\theta_c, \pi/2)$ with
$\vert \sin\left(({\theta_c-\theta_{\hat x}})/2\right)\vert \geq  {(2\sqrt{k_{+} \vert x \vert})}^{-1}$.
The proof of this case can be easily obtained by using (\ref{eq11}) and (\ref{eq32}).

\textbf{Case 2}: $\theta_{\hat{x}}\in(\theta_c, \pi/2)$ with
$\vert \sin \left(({\theta_c-\theta_{\hat x}})/2\right)\vert <  {(2\sqrt{k_{+} \vert x \vert})}^{-1}$.
Define $g_1(s):= {\left[g(s)-g(s_b)-(g(s_b)-g(0))(s-s_b)/{s_b}\right]}/{\left[s(s-s_b)\right]}$.
Due to the analyticity of $g(s)$, it is easily seen that $g_1(s)$
 is analytic
in $s\in L_{\sigma^{(1)}_{\theta_{\hat{x}}}}$.
Then following the idea in \cite[Section 2]{BLE1966} and \cite[Section A.3.3]{bg_2},
we can employ (\ref{Ne16}) and integration by parts to rewrite (\ref{Ne5}) as
\begin{align}\label{eq35}
&G^{(2)}_{\mathcal R}(x,y)= \notag\\
&\frac{ie^{ik_{+}\vert x \vert}}{4\pi}\left\{\int^{+\infty}_{-\infty}
\left[g(s_b)\sqrt{s-s_b} + \frac{g(s_b)-g(0)}{s_b}(s-s_b)^{\frac 32}\right ]e^{-\vert x \vert s^2}ds
+ G^{(2)}_{\mathcal R,1}(x,y)\right\},
\end{align}
where
\ben
G^{(2)}_{\mathcal R,1}(x,y):=\frac{1}{2\vert x \vert}\int^{+\infty}_{-\infty}\sqrt{s-s_b}
\left((s-s_b)g'_1(s)+\frac 32 g_1(s)\right)e^{-\vert x \vert s^2}ds.
\enn
It can be seen that $g(s_b)=h_c(\theta_{\hat{x}})$
and $\I(s_b) < 0$, where the function $h_c(\theta)$ is defined by
\begin{align}\label{eq14}
h_c(\theta):=\frac{{2}^{\frac{9}{4}}e^{{5\pi i}/{8}}}{k^{\frac{3}{4}}_{+}
\left[\cos\left(\frac{\theta_c-\theta}2\right)\right]^{3/2}\left(\tan\theta_c\right)^{1/2}}
e^{-i k_{+}\vert y \vert\cos(\theta_c+\theta_{\hat y})},\quad \theta\in(0,\pi/2).
\end{align}
Thus it follows from (\ref{neq:4.2.33}) that
\begin{align}{\label{j:e26}}
&G^{(2)}_{\mathcal R} (x,y)\notag\\
=&\frac{ie^{ik_+\vert x \vert}}{4\pi}\left\{g(s_b)F_2(i{\vert x \vert},s_b,1/2)
+\frac{g(s_b)-g(0)}{s_b}F_2(i{\vert x \vert}, s_b,3/2)+ G^{(2)}_{\mathcal R,1}(x,y)\right\}\notag\\
=&{\frac{e^{{11\pi i}/{8}+ik_{+}[\vert x \vert\cos^2({(\theta_c-\theta_{\hat x})}/2)
-\vert y \vert\cos(\theta_c+\theta_{\hat{y}})]}}{\sqrt{\pi}\vert x \vert^{\frac34} k^{\frac34}_{+}
\left[\cos\left(\frac{\theta_c-\theta_{\hat x}}2\right)\right]^{3/2}
{\left(\tan\theta_c\right)^{1/2}}}}\left(D_{\frac 12}(u)+\frac{A-1}{u}D_{\frac 32}(u)\right)\notag\\
&+\frac{ie^{ik_+\vert x \vert}}{4\pi}G^{(2)}_{\mathcal R,1}(x,y),
\end{align}
where $A:=g(0)/g(s_b)$ and $u:=2\sqrt{k_{+}\vert x \vert}e^{{3\pi i}/4}\sin({(\theta_c-\theta_{\hat x})}/2)$.

In the remaining part of this case, we further assume that $\vert x \vert$ is large enough so that
$\vert x \vert\geq 8\left(\sigma^{(1)}_{\min}\right)^{-2}$ with $\sigma^{(1)}_{\min}$ given in
Lemma \ref{le:4}. Then, due to the assumption
$\left\vert \sin\left({(\theta_c-\theta_{\hat x})}/2\right)\right\vert <({2\sqrt{k_{+} \vert x \vert}})^{-1}$
and Lemma \ref{le:4}, we have $\vert s_b\vert <\sigma^{(1)}_{\theta_{\hat{x}}}/4$.
Hence, we can use the analyticity of $g(s)$ in $L_{\sigma^{(1)}_{\theta_{\hat{x}}}}$ and
the Taylor formula to obtain that
\begin{align*}
g_1(s)&= \frac{g(s)-g(0)-(g(s_b)-g(0))s/s_b}{s(s-s_b)}\\
&=\frac{\sum^{+\infty}_{n=1}{\frac{d^n g}{d s^n}(0)}s^{n-1}/{n!}
-\sum^{+\infty}_{n=1}{\frac{d^n g}{d s^n}(0)}s_b^{n-1}/{n!}}{s-s_b}
\end{align*}
for $\vert s\vert <{\sigma^{(1)}_{\theta_{\hat{x}}}}$.
These, together with (\ref{eq31}) and (\ref{Ne16}), imply that
\begin{align}\label{eq12}
\vert g_1(s)\vert &\le \sum^{+\infty}_{n=2}{(n-1)}\left\vert \frac{\frac{d^n g}{d s^n}(0)}{n!}
\frac{{\left(\sigma^{(1)}_{\theta_{\hat{x}}}\right)}^{n-2}}{2^{n-2}}\right\vert \notag\\
&\le {C_{R_0}}{\left(\sigma^{(1)}_{\theta_{\hat{x}}}\right)^{-2}}
\sum^{+\infty}_{n=2}{\frac{n-1}{2^{n-2}}}\le C_{R_0} \quad \textrm{for}\;\vert s\vert
\leq {\sigma^{(1)}_{\theta_{\hat{x}}}}/2.
\end{align}
Further, similarly as above, we can apply the Taylor formula and (\ref{Ne16}) to obtain that
\begin{align} \label{eq:14}
\vert g(0)-g(s_b)\vert /\vert s_b\vert  \le \left\vert \sum^{+\infty}_{n=1}\frac{d^n g}{d s^n}(0) s^{n-1}_b/n!\right\vert
\le C_{R_0}\sum_{n=1}^{+\infty}\left(\sigma^{(1)}_{\theta_{\hat{x}}}\right)^{-n}\vert s_b\vert^{n-1}
\le C_{R_0}.
\end{align}
From (\ref{eq31}), (\ref{Ne3}), (\ref{eq:14}) and $\vert s_b\vert <\sigma^{(1)}_{\theta_{\hat{x}}}/4$, we have
\begin{align}\label{eq13}
\vert g_1(s)\vert \leq \vert g(s)-g(0)\vert /(\vert s\vert \vert s-s_b\vert ) + C_{R_0}/\vert s-s_b\vert
\le C_{R_0}{(1+\vert s\vert )}^{\frac 52}e^{\widetilde{C}_{R_0}{\vert s\vert }^2}
\end{align}
for $s \in L_{\sigma^{(1)}_{\theta_{\hat{x}}}}$ with $\vert s\vert >\sigma^{(1)}_{\theta_{\hat{x}}}/2$.
Thus it follows from (\ref{eq12}) and (\ref{eq13}) that
\begin{align} \label{eq:25}
\vert g_1(s)\vert  \le  C_{R_0}{(1+\vert s\vert )}^{\frac 52}e^{\widetilde{C}_{R_0}{\vert s\vert }^2}\;
\textrm{for}\;s\in L_{\sigma^{(1)}_{\theta_{\hat{x}}}}.
\end{align}
Then similarly as in deriving the estimate (\ref{Ne16}), we can use (\ref{eq31}), (\ref{eq:25}) and
the Cauchy inequality to obtain
\begin{align*}
\vert g'_1(s)\vert  \le  C_{R_0}{(1+\vert s\vert )}^{\frac 52}e^{\widetilde{C}_{R_0}{\vert s\vert }^2},~s\in \Rb.
\end{align*}
Hence, by this and (\ref{eq:25}) we arrive at the result
\begin{align} \label{eq:3.2.14}
\vert G^{(2)}_{\mathcal R,1}(x,y)\vert &\le C_{R_0}\vert x \vert^{-1}\int^{+\infty}_{-\infty}  (1+\vert s\vert^ {4})e^{\left(\widetilde{C}_{R_0}-\vert x \vert\right){\vert s\vert }^2}ds\nonumber\\
&\le C_{R_0}\vert x \vert^{-1}\left((\vert x \vert-\widetilde{C}_{R_0})^{-1/2}+(\vert x \vert-\widetilde{C}_{R_0})^{-5/2}\right)
\le {C_{R_0}}{\vert x \vert}^{-3/2}.
\end{align}
From the explicit expression of $g(s_b)$ and (\ref{eq:14}), we have
\begin{align}\label{eq15}
\vert ({A-1})/{u}\vert =\left\vert\left({g(0)-g(s_b)}\right)/\left({i\sqrt{2\vert x \vert}s_bg(s_b)}\right)\right\vert
\le C_{R_0}\vert x \vert^{-1/2}.
\end{align}
Moreover, since $\vert u\vert <1$ due to the assumption
$\vert \sin\left(({\theta_c-\theta_{\hat x}})/2\right)\vert <{(2\sqrt{k_{+} \vert x \vert})}^{-1}$, we can apply the
fact that $D_{ 1/2}(z)$ and $D_{3/2}(z)$ are analytic for any
$z\in\mathbb{C}$ to obtain that $\vert D_{ 1/2}(u)\vert \leq C$ and $\vert D_{3/2}(u)\vert \leq C$ for some constant $C>0$.
This, together with (\ref{j:e26}), (\ref{eq:3.2.14}), (\ref{eq15}) and the fact that $\vert {\cos\left({(\theta_c-\theta)}/2\right)}\vert \geq \cos(\pi/4-\theta_c/2)$ for any
$\theta\in\left(\theta_c,{\pi}/2\right)$, implies that
\be \label{eq:3.2.15}
\vert G^{(2)}_{\mathcal R}(x,y)\vert \le {C_{R_0}}{\vert x \vert^{-3/4}}(1+\vert x \vert^{-1/2}) + {C_{R_0}}{\vert x \vert}^{-3/2}
\leq {C_{R_0}}{\vert x \vert^{-3/4}}.
\en
Further, using the assumption $\vert \sin\left(({\theta_c-\theta_{\hat x}})/2\right)\vert <{(2\sqrt{k_+\vert x \vert})}^{-1}$
again, we have
\begin{align*}
\vert \cos\theta_{\hat x}-\cos \theta_c\vert = \left\vert 2\sin \left(({\theta_c-\theta_{\hat x}})/2\right)\sin \left(({\theta_c+\theta_{\hat x}})/2\right)\right\vert\le  k^{-1/2}_+ \vert x \vert^{-1/2},
\end{align*}
which yields that
\be \label{eq:3.2.16}
\left\vert \frac{e^{ik_{+}\vert x \vert}}{\sqrt{\vert x \vert}}\frac{e^{i\frac{\pi}4}}{\sqrt{8\pi k_{+}}}
\frac{2i\mathcal S{(\cos\theta_{\hat x},n)}}{n^2-1}\sin\theta_{\hat x}e^{-ik_{+}
\vert y \vert\cos(\theta_{\hat x}+\theta_{\hat y})}\right\vert\le C \vert x \vert^{- 3/4}
\en
for some constant $C>0$.
Therefore, it follows from (\ref{eq:3.2.15}) and (\ref{eq:3.2.16}) that $G^{(2)}_{\mathcal R}(x,y)$
has the form (\ref{eq:3.2.11}) with  $G^{(2)}_{\mathcal R,Res}(x,y)$ satisfying (\ref{neq:4.2.23})
uniformly for all $\theta_{\hat{x}}\in(\theta_c, \pi/2)$ with
$\vert \sin\left(({\theta_c-\theta_{\hat x}})/2\right)\vert <{(2\sqrt{k_{+}\vert x \vert})}^{-1}$
and $y\in B^+_{R_0}$.

\textbf{Case 3}: $\theta_{\hat x}=\theta_c$.
Taking the limit $\theta_{\hat{x}}\rightarrow +\theta_c$ along the real axis in (\ref{neq:4.2.45}),
we obtain that for $\theta_{\hat{x}}=\theta_c$,
\begin{align*}
\mathcal S(\cos(\zeta(s)),n)=
\left\{
\begin{array}{ll}
  \sqrt {2/k_{+}}e^{-i\frac{\pi}{4}}H_{\theta_c}(s) H_{\pi-\theta_c}(s)\sqrt{s-s_b^{*}}
  \sqrt{s}, & s\geq 0,\\
  \sqrt {2/k_{+}}e^{-i\frac{\pi}{4}}H_{\theta_c}(s) H_{\pi-\theta_c}(s)\sqrt{s-s_b^{*}}
  \sqrt{-s}i, & s<0.
\end{array}
\right.
\end{align*}
Then, by (\ref{eq16}), $G^{(2)}_{\RR}(x,y)$ can be written as
\begin{align}
G^{(2)}_{\RR}(x,y) &= \frac{ie^{ik_{+}\vert x \vert}}{4\pi}\int^{+\infty}_{0}
\left(g(s)+i g(-s)\right)\sqrt{s}e^{-\vert x \vert s^2}ds\nonumber\\
& = \frac{ie^{ik_{+}\vert x \vert}}{4\pi}\int^{+\infty}_{0}\bigg\{\left(g(0)+i g(0)\right)\sqrt{s}e^{-\vert x \vert s^2} \notag \\
&\quad +\left[(g(s)-g(0))+i (g(-s)-g(0))\right]\sqrt{s}e^{-\vert x \vert s^2}\bigg\}ds\nonumber\\ \label{eq45}
&= \frac{ie^{ik_{+}\vert x \vert}}{4\pi}\bigg\{2^{-{1}/{2}}e^{{\pi i}/{4}}g(0)\Gamma({3}/{4})\vert x \vert^{-3/4}
\notag\\
&\quad+\int^{+\infty}_{0}\left[(g(s)-g(0))+i (g(-s)-g(0))\right]\sqrt{s}e^{-\vert x \vert s^2}ds\bigg\},
\end{align}
where $g(s)$ is given by (\ref{eq:18}).
Similarly as in the discussion in Part I of this step, we can apply (\ref{eq31}), (\ref{Ne16})
and the mean-value theorem for $\Rt(g(s)-g(0))$ and $\I (g(s)-g(0))$ to obtain that
\ben
\vert (g(s)-g(0))/s\vert \leq C_{R_0}(1+\vert s\vert )^{5/2} e^{\widetilde{C}_{R_0}\vert s\vert^2}~\textrm{for}~s\in\mathbb{R}.
\enn
Thus it follows that
\begin{align}
&\left\vert\int^{+\infty}_{0}\left[(g(s)-g(0))+i (g(-s)-g(0))\right]\sqrt{s}e^{-\vert x \vert s^2}ds\right\vert\notag\\
&\leq C_{R_0}\int^{+\infty}_{0}(s^{3/2}+s^4)e^{-(\vert x \vert-\widetilde{C}_{R_0})s^2}ds\nonumber\\
&\leq C_{R_0}\left[ (\vert x \vert-\widetilde{C}_{R_0})^{-5/4}
+(\vert x \vert-\widetilde{C}_{R_0})^{-5/2} \right]\leq C_{R_0}\vert x \vert^{-5/4}\label{eq46}
\end{align}
for large enough $\vert x \vert$. Hence, using (\ref{eq45}) and (\ref{eq46}), we have
$\vert G^{(2)}_{\mathcal{R}}(x,y)\vert \leq C_{R_0}\vert x \vert^{-3/4}$.
This, together with the fact that $\mathcal S{(\cos\theta_{c},n)}=0$, implies that
$G^{(2)}_{\mathcal R}(x,y)$ has the form
(\ref{eq:3.2.11}) with $G^{(2)}_{\mathcal R,Res}(x,y)$ satisfying (\ref{neq:4.2.23}) uniformly
for $\theta_{\hat{x}}=\theta_c$ and all $y\in B^+_{R_0}$.

Based on the discussions for the above three cases, we obtain that $G^{(2)}_{\mathcal R}(x,y)$
has the form (\ref{eq:3.2.11}) with $G^{(2)}_{\mathcal R,Res}(x,y)$ satisfying (\ref{neq:4.2.23})
uniformly for all $\theta_{\hat x}\in \left[\left.\theta_c,{\pi}/2\right)\right.$ and $y\in B^+_{R_0}$.

\textbf{Step 2}: Estimate of $G^{(1)}_{\mathcal R}(x,y)$. From (\ref{eq16}) and statement (\ref{e1}) of
Lemma \ref{le:4}, it is easily seen that
\begin{align*}
G^{(1)}_{\RR}(x,y) = \frac{ie^{ik_{+}\vert x \vert}}{4\pi} \int^{+\infty}_{-\infty}g_2(s) e^{-\vert x \vert s^2}ds,
\end{align*}
where $g_2(s): = v_1(s)F(s){d\zeta(s)}/{ds}$ is analytic in $ L_{\sqrt{k_+}}$.
A straightforward calculation gives that
\begin{align}
&G^{(1)}_{\mathcal R}(x,y)=\frac{ie^{ik_{+}\vert x \vert}}{4\pi}\left(\int^{+\infty}_{-\infty}g_2(0)
e^{-\vert x \vert s^2}ds + \int^{+\infty}_{-\infty}g'_{2}(0) se^{-\vert x \vert s^2}ds
+ G^{(1)}_{\mathcal R, Res}(x,y)\right) \notag \\
& = \frac{e^{ik_{+}\vert x \vert}}{\sqrt{\vert x \vert}}\frac{e^{i\frac{\pi}4}}{\sqrt{8\pi k_{+}}}
\left(\frac{2\cos^2\theta_{\hat x}-1-n^2}{n^2-1}\right) e^{-i k_{+}\vert y \vert
\cos(\theta_{\hat x}{+}\theta_{\hat y})} + \frac{ie^{ik_{+}\vert x \vert}}{4\pi}
G^{(1)}_{\mathcal R, Res}(x,y), \label{eq:26}
\end{align}
where $G^{(1)}_{\mathcal R,Res}(x,y):=\int^{+\infty}_{-\infty}g_{2,Res}(s)e^{-\vert x \vert s^2}ds$
with $g_{2,Res}(s):= g_2(s)-g_2(0)-g'_2(0)s$.

Let ${\widetilde{\sigma}}$ be a fixed number in $(0,\sqrt{k_+})$.
Note that $\vert P(s)\vert \neq 0$ in $s\in L_{\sqrt{k_+}}$.
Then similarly to the derivation of (\ref{Ne3}), we can get that
$\left\vert g_2(s)\right\vert \le C_{R_0,{\widetilde{\sigma}}}{(1+\vert s\vert )}^{3}e^{\widetilde{C}_{R_0}{\vert s\vert }^2}$
for any $s\in  L_{{\widetilde{\sigma}}}$,
where the constant $C_{R_0,{\widetilde{\sigma}}}>0$ depends on $R_0$ and ${\widetilde{\sigma}}$.
Thus it follows from Cauchy inequality that
\ben
\left\vert\frac{d^2 g_2}{ds^2}(s)\right\vert \le {2 C_{R_0,{\widetilde{\sigma}}}(1+\vert s\vert +{\widetilde{\sigma}})^3
e^{\widetilde{C}_{R_0}(\vert s\vert +{\widetilde{\sigma}})^2}}/\widetilde{\sigma}^2
\leq C_{R_0,{\widetilde{\sigma}}}(1+\vert s\vert )^3e^{\widetilde{C}_{R_0}\vert s\vert^2}
\enn
for $s\in\Rb$.
Hence, similarly as in the discussion in Part I of Step 1, we can apply the mean-value theorem
for $\Rt(g_{2,Res})$ and $\I (g_{2,Res})$ to obtain that
\begin{align*}
\left\vert\frac{g_{2,Res}(s)}{s^2}\right\vert
\le C_{R_0,{\widetilde{\sigma}}}{(1+\vert s\vert )}^{3}e^{\widetilde{C}_{R_0}{\vert s\vert }^2}~\textrm{for}~s\in\Rb.
\end{align*}
For simplicity, we choose ${\widetilde{\sigma}}=\sqrt{k_+}/2$ and so it follows that
\begin{align*}
\vert G^{(1)}_{\mathcal R,Res}(x,y)\vert &\le C_{R_0,\sqrt{k_+}/2}\int_0^{+\infty}
\left({s}^2+s^{5}\right)e^{-(\vert x \vert-\widetilde{C}_{R_0})s^2}ds\\
&\le C_{R_0,\sqrt{k_+}/2}\left[ (\vert x \vert-\widetilde{C}_{R_0})^{-3/2}
+ (\vert x \vert-\widetilde{C}_{R_0})^{-3} \right]\leq C_{R_0}\vert x \vert^{-3/2}
\end{align*}
for $\vert x \vert$ large enough.
This, together with (\ref{eq:26}), implies that $G^{(1)}_{\mathcal R}(x,y)$ has the form (\ref{j:e25})
with $G^{(1)}_{\mathcal R,Res}(x,y)$ satisfying (\ref{neq:4.2.32}) uniformly for all
$\theta_{\hat x}\in \left[\left.\theta_c,{\pi}/2\right)\right.$ and $y\in B^+_{R_0}$.
\end{proof}

\begin{lemma} \label{Le:n4.2.2}
Assume that $k_+>k_-$ and let $R_0>0$ be an arbitrary fixed number.
Suppose that $y=\vert y \vert(\cos \theta_{\hat y}, \sin \theta_{\hat y})\in B^+_{R_0}$
with $\theta_{\hat{y}}\in(0,\pi)$ and $x=\vert x \vert(\cos\theta_{\hat x},\sin\theta_{\hat x})\in\Rb^2_+$
with $\theta_{\hat x}\in (0,\theta_c)$, then the following statements hold.
\begin{enumerate}
\item\label{s1}
$G^{(1)}_{\mathcal R}(x,y)$ has the asymptotic behavior (\ref{j:e25}) with
$G^{(1)}_{\mathcal R,Res}(x,y)$ satisfying (\ref{neq:4.2.32}) uniformly for all
$\theta_{\hat x}\in (0,\theta_c)$ and $y\in B^+_{R_0}$.
\item \label{s2}
$G^{(3)}_{\mathcal R}(x,y)$ has the asymptotic behavior
\begin{align*}
& G^{(3)}_{\mathcal R}(x,y)=\\
&\frac{e^{ik_{+}\vert x \vert}}{\sqrt{\vert x \vert}}\frac{e^{i\frac{\pi}4}}{\sqrt{8\pi k_{+}}}
\left(\frac{2i\sin\theta_{\hat x}\widetilde{\mathcal S}(\cos\theta_{\hat x},n)}{n^2-1}\right) e^{-ik_{+}\vert y \vert\cos(\theta_{\hat x}+\theta_{\hat y})} + G^{(3)}_{\mathcal R,Res}(x,y)
\end{align*}
with $G^{(3)}_{\mathcal R,Res}(x,y)$ satisfying that
\begin{align*}
&\vert G^{(3)}_{\mathcal R,Res}(x,y)\vert \le{C_{R_0}}{\vert x \vert^{- 3/4}}, && \vert x \vert\rightarrow+\infty,\\
&\vert G^{(3)}_{\mathcal R,Res}(x,y)\vert  \le {C_{R_0}}{{{\left\vert \theta_c-\theta_{\hat x}
\right\vert}^{-\frac 32}}\vert x \vert^{-\frac 32}}, && \vert x \vert\rightarrow+\infty,
\end{align*}
uniformly for all $\theta_{\hat x}\in (0,\theta_c)$ and $y\in B^+_{R_0}$.
\item \label{s4}
$G^{(4)}_{\mathcal R}(x,y)$ satisfies that
\begin{align}
&\vert G^{(4)}_{\mathcal R}(x,y)\vert \le
 {C_{R_0}}{\vert x \vert^{- 3/4}},&& \vert x \vert\rightarrow+\infty,\label{neq:4.2.36}\\ \label{neq:4.2.37}
&\vert G^{(4)}_{\mathcal R}(x,y)\vert  \le {C_{R_0}}{{{\left\vert \theta_c-\theta_{\hat x}\right\vert}^{-\frac 32}}
\vert x \vert^{-\frac 32}},&& \vert x \vert\rightarrow+\infty,
\end{align}
uniformly for all $\theta_{\hat x}\in (0,\theta_c)$ and $y\in B^+_{R_0}$.
\end{enumerate}
Here, the constant $C_{R_0}>0$ is independent of $x$ and $y$ but dependent of $R_0$.
\end{lemma}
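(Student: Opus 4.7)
For statements (\ref{s1}) and (\ref{s2}), the plan is to recycle the arguments from Lemma \ref{NLe:3.2.4}. Step 2 of that proof only used the analyticity of $g_2(s)=v_1(s)F(s)\zeta'(s)$ in the fixed strip $L_{\sqrt{k_+}/2}$, together with Cauchy's inequality and a second-order Taylor expansion at $s=0$; since $v_1(s)=(\cos(2\zeta(s))-n^2)/(n^2-1)$ is entire on $L_{\sqrt{k_+}}$ and the resulting bound is uniform for $\theta_{\hat x}\in(0,\pi/2)$, the same chain of estimates gives (\ref{s1}) immediately. For (\ref{s2}) I would imitate Step 1 of Lemma \ref{NLe:3.2.4}, replacing $\mathcal S$ by $\widetilde{\mathcal S}$ and using formula (\ref{eq:1}) to factor $\widetilde{\mathcal S}(\cos\zeta(s),n)$ as $-\sqrt{2/k_+}e^{-i\pi/4}H_{\theta_c}(s)H_{\pi-\theta_c}(s)\sqrt{s-s_b^{*}}\sqrt{s-s_b}$ on $L_{|\Ima s_b|}$. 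Since $\theta_{\hat x}<\theta_c$ now places $s_b$ in the upper half-plane, the correct version of $F_2$ from Lemma \ref{NLL1} to invoke is (\ref{j:e11}) rather than (\ref{neq:4.2.33}); the phase shift this induces is the only substantive change. The analyticity bounds of Lemma \ref{le:4}(\ref{e3}) guarantee that all estimates are uniform in $\theta_{\hat x}\in(0,\theta_c)$, and the three-case split on $|\sin((\theta_c-\theta_{\hat x})/2)|$ versus $(2\sqrt{k_+|x|})^{-1}$ then delivers both the $O(|x|^{-3/4})$ and the sharper $O(|\theta_c-\theta_{\hat x}|^{-3/2}|x|^{-3/2})$ bounds.

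For statement (\ref{s4}), $G^{(4)}_{\mathcal R}$ is the genuinely new object. The plan is: (a) shrink the loop $\mathcal L_o$ onto the branch cut $\mathcal I^+_0$, using (\ref{eq8}) to express the result as a line integral along $\mathcal I^+_0$ involving $\mathcal S_-(\cos\zeta,n)$; (b) apply the change of variable $\zeta=\zeta(s)$, noting that $\zeta(s_b)=\theta_c$ is the tip of the cut and that $ik_+\cos(\zeta(s)-\theta_{\hat x})=ik_+-s^2$ makes the phase Gaussian; (c) by Cauchy's theorem, deform the resulting contour onto the standard horizontal loop $\gamma_2$ around $\{z:\Ima z=\Ima s_b,\;\Rt z\leq\Rt s_b\}$ of Lemma \ref{NLL1}; (d) factor the integrand as $h(s)\sqrt{s-s_b}$ with $h$ analytic near $s_b$, write $h(s)=h(s_b)+(h(s)-h(s_b))$ and apply (\ref{eq:j.3}) with $\beta=1/2$ to the leading term, producing a $D_{-3/2}(u)$ contribution with $u\propto\sqrt{k_+|x|}\,e^{i3\pi/4}\sin((\theta_c-\theta_{\hat x})/2)$; (e) control the remainder by Cauchy's inequality exactly as in (\ref{eq:3.2.14}). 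The leading term then yields (\ref{neq:4.2.36}) by boundedness of $D_{-3/2}$ near the origin, and the sharper bound (\ref{neq:4.2.37}) by the large-argument asymptotics $D_{-3/2}(u)=O(|u|^{-3/2})$ for $|u|\gg 1$.

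The main obstacle is step (c): justifying the contour deformation in the $s$-plane from the image of the implicitly defined cut $\mathcal I^+_0$ to the horizontal branch cut required by Lemma \ref{NLL1}. This requires checking that $h(s)$ admits an analytic continuation through the region bounded by the two contours, that $e^{-|x|s^2}$ supplies enough decay along the connecting arcs at infinity, and that no other branch points of the factor $H_{\theta_c}(s)H_{\pi-\theta_c}(s)\sqrt{s-s_b^{*}}$ are crossed during the deformation. The strip estimates of Lemma \ref{le:4}(\ref{e3}), and in particular the uniform lower bound $\sigma^{(1)}_{\min}>0$ on $\sigma^{(1)}_{\theta_{\hat x}}$, are essential here, since they ensure $s_b$ always sits inside a strip of uniform width on which $h$ is analytic. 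When $\theta_{\hat x}\to\theta_c^-$, however, $|s_b|\to 0$ and the quantitative control of $h$ near $s_b$ must be carried out carefully, paralleling the three-case argument used in Step 1 of Lemma \ref{NLe:3.2.4}.
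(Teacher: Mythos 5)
Your treatment of statements (\ref{s1}) and (\ref{s2}) matches the paper's proof: statement (\ref{s1}) is obtained exactly as in Step 2 of the proof of Lemma \ref{NLe:3.2.4}, and statement (\ref{s2}) repeats Step 1 with $\mathcal S$ replaced by $\widetilde{\mathcal S}$ via (\ref{eq:1}) and with (\ref{j:e11}) used in place of (\ref{neq:4.2.33}) because $\Ima(s_b)>0$; that part is fine.

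For statement (\ref{s4}) your route diverges from the paper's, and the step you yourself flag as the main obstacle, step (c), is a genuine gap that cannot be repaired as stated. After the change of variable, the image of $\mathcal I^+_0$ is a curve leaving $s_b$ and tending to infinity in a direction just below the positive real axis, while the loop $\gamma_2$ of Lemma \ref{NLL1} surrounds the leftward horizontal ray from $s_b$. Any homotopy between these two contours must swing the infinite end through one of the sectors $\vert\arg s\mp\pi/2\vert<\pi/4$, where $\Rt(s^2)<0$ and $e^{-\vert x\vert s^2}$ grows exponentially, so the connecting arcs at infinity cannot be discarded. Moreover $h(s)$ is not entire: besides the branch point $s_b^*$ (the image of $\pi-\theta_c$), the image of $-\theta_c$, namely $-\sqrt{2k_+}e^{i\pi/4}\sin((\theta_c+\theta_{\hat x})/2)$, is also a branch point, and both lie on the line in the direction $e^{i\pi/4}$ where $s^2$ is purely imaginary and $\vert e^{-\vert x\vert s^2}\vert=1$; whichever side you sweep through, you cross at least one of these cuts and pick up a contribution that is not negligible. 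This is precisely why the paper never performs such a deformation: it first reduces $G^{(4)}_{\mathcal R}$, via (\ref{eq:20}), (\ref{eq:19}), (\ref{eq:11}) and Cauchy's theorem in the $\zeta$-plane, to the representation (\ref{eq30}) over $\mathcal I_{s_b,0}\cup[0,+\infty)$ with $f$ analytic in the whole strip, and then deforms only within the right half-strip: onto the steepest descent path $\mathcal D_{s_b}$ through $s_b$, which gives the sharp bound (\ref{neq:4.2.37}) by a direct Watson-type estimate carrying the factor $\vert s_b\vert^{-3/2}$ and using no special functions; and, only in the near-critical regime $\vert\sin((\theta_c-\theta_{\hat x})/2)\vert<(2\sqrt{k_+\vert x\vert})^{-1}$, onto the rightward ray $\mathcal I_{s_b,s_b+\infty}$, where (\ref{eq:j.3}) enters not as a contour deformation but as an algebraic identity (via $t=-s$) applied only to the explicit monomials $(s-s_b)^{1/2}$ and $(s-s_b)^{3/2}$ after subtracting the first Taylor data of $f$. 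Relatedly, your plan to extract (\ref{neq:4.2.37}) from the large-argument decay of $D_{-3/2}$ presupposes a single representation with remainder $O(\vert x\vert^{-7/4})$ uniformly over all $\theta_{\hat x}\in(0,\theta_c)$; but the subtraction-based remainder bounds (the analogues of (\ref{eq37}) and (\ref{eq:25})) are available only when $\vert s_b\vert$ is small compared with $\sigma^{(1)}_{\theta_{\hat x}}$, and the strip half-width degenerates relative to $\Ima(s_b)$ as $\theta_{\hat x}\to0^+$, so the delicate regime for your scheme is $\theta_{\hat x}$ away from $\theta_c$, not $\theta_{\hat x}\to\theta_c^-$ as you suggest. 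Adopting the paper's two-regime strategy closes the gap.
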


\begin{proof}
Let $\vert x \vert$ be sufficiently large throughout the proof.
Statement (\ref{s1}) can be obtained in a same way as in Step 2 of the proof of Lemma \ref{NLe:3.2.4}.
Moreover, as mentioned in Remark \ref{re2}, the steepest descent path $\mathcal{D}$ will not
cross the branch cuts of $\widetilde{\mathcal S}(\cos\zeta, n)$
for the case $\theta_{\hat{x}}\in(0,\theta_c)$.
Thus statement (\ref{s2}) can be proved in a similar way as in Step 1 of the proof of Lemma \ref{NLe:3.2.4}.
Note that formula (\ref{j:e11}) is needed for deducing statement (\ref{s2}) since $\I(s_b)>0$
for $\theta_{\hat{x}}\in(0,\theta_c)$ (compare the derivation of (\ref{j:e26})).

Now we only need to prove statement (\ref{s4}). From  (\ref{eq:20}), (\ref{eq:19}), (\ref{eq:11})
and the fact that ${\mathcal S}_{-}(\cos\zeta,n) =  -\widetilde{\mathcal S}(\cos\zeta,n)$
for $\zeta\in\mathcal I_{\theta_c,\zeta_o}$ (see (\ref{eq8})), it can be seen that
\begin{align*}
&G^{(4)}_{\mathcal R}(x,y)=\\
&-\frac{i}{2\pi}\left[\int_{\mathcal I_{\theta_c,\zeta_o}}
+ \int_{\mathcal D_{\zeta_o,\frac\pi2+\theta_{\hat{x}}-i\infty}} \right]
\frac{2i\sin\zeta \widetilde{\mathcal S}(\cos\zeta,n)}{n^2-1} {e^{ik_{+}
\left(-\vert y \vert\cos(\zeta+\theta_{\hat y})+ \vert x \vert\cos(\zeta-\theta_{\hat x})\right)}}d\zeta.
\end{align*}
Here, the paths $\mathcal I_{\theta_c,\zeta_o}$ and $\mathcal D_{\zeta_o,\frac\pi2+\theta_{\hat{x}}-i\infty}$
are defined in the proof of Lemma \ref{Le:n4.2.1}.
Then it follows from Remark \ref{re2} and Cauchy integral theorem  that
\begin{align*}
& G^{(4)}_{\mathcal R}(x,y)=\\
&-\frac{i}{2\pi}\left[\int_{\theta_c}^{\theta_{\hat x}}
 + \int_{\mathcal D_{\theta_{\hat x}, \frac\pi2+\theta_{\hat x}-i\infty}}\right]
  \frac{2i\sin\zeta\widetilde{\mathcal S}(\cos\zeta,n)}{n^2-1}{e^{ik_{+}
  \left(-\vert y \vert\cos(\zeta+\theta_{\hat y})+ \vert x \vert\cos(\zeta-\theta_{\hat x})\right)}}d\zeta,
\end{align*}
where $\mathcal D_{\theta_{\hat x}, \pi/2+\theta_{\hat x}-i\infty}$ denotes the part of
the path $\mathcal{D}$ starting from $\theta_{\hat x}$ and ending at $\pi/2+\theta_{\hat{x}}-i\infty$.
Let $\zeta(s)$ be defined in Lemma \ref{le:4} and $\mathcal{I}_{s_b,0}$ be the path $\{\sqrt{2k_+}e^{i\pi/4}t:t\in\mathbb{R}~\textrm{s.t.}~0<t<\sin((\theta_c-\theta_{\hat{x}})/2)\}$
with the orientation from $s_b$ to $0$.
From the arguments in the proof of statement (\ref{e1}) of Lemma \ref{le:4}, it can be seen that
$\zeta(s)$ is a conformal mapping from $L_{\sqrt{k_+}}$ to $\{\zeta(s):s\in L_{\sqrt{k_+}}\}$.
Since $\mathcal{I}_{s_b,0}$ lies in $L_{\sqrt{k_+}}$,
it follows from the change of variable $\zeta = \zeta(s)$ and statement (\ref{e2}) of Lemma \ref{le:4} that
\begin{align}
G^{(4)}_{\mathcal R}(x,y)&=-\frac{ie^{i k_{+}\vert x \vert}}{2\pi}\left[\int_{\mathcal{I}_{s_b,0}}
+\int^{+\infty}_{0}\right]\frac{2i\sin\zeta(s)\widetilde{\mathcal S}(\cos{(\zeta(s))},n)}{n^2-1}
F(s)\frac{d\zeta(s)}{ds}e^{-\vert x \vert s^2}ds\nonumber\\ \label{eq30}
&=-\frac{ie^{i k_{+}\vert x \vert}}{2\pi}\left[\int_{\mathcal{I}_{s_b,0}}
+ \int^{+\infty}_{0}\right]\sqrt{s-s_b} f(s)e^{-\vert x \vert s^2}ds
\end{align}
with $F(s)$ defined in (\ref{eq:3.2.3}) and $f(s)$ given by
\begin{align*}
f(s):= -\sqrt {2/k_{+}}e^{-i\frac{\pi}{4}}H_{\theta_c}(s) H_{\pi-\theta_c}(s)
\sqrt{s-s_b^{*}}F(s)\frac{d\zeta(s)}{ds} \frac{2i\sin\zeta(s)}{n^2-1}.
\end{align*}
Further, it can be seen from statement (\ref{e3}) of Lemma \ref{le:4} that
$f(s)$ is an analytic function in the strip $L_{\sigma^{(1)}_{\theta_{\hat{x}}}}$ with
$\sigma^{(1)}_{\theta_{\hat{x}}}$ given as in Lemma \ref{le:4}.
Moreover, by same arguments as in the derivation of (\ref{Ne3}), we have
\begin{align}\label{eq:27}
\vert f(s)\vert \le C_{R_0}{(1+\vert s\vert )}^{\frac 52}e^{\widetilde{C}_{R_0}{\vert s\vert }^2},\quad
s\in  L_{\sigma^{(1)}_{\theta_{\hat{x}}}},
\end{align}
where the positive constants $C_{R_0}$ and $\widetilde{C}_{R_0}$ are independent of $\theta_{\hat x}$
but dependent of $R_0$.
Due to (\ref{eq31}), it follows from Cauchy inequality and (\ref{eq:27}) that for $n=0,1,2,\ldots$,
\begin{align}\label{eq34}
\left\vert\frac{d^n f}{d s^n}(s)\right\vert&\leq
\frac{C_{R_0}n!(1+\vert s\vert +\sigma^{(1)}_{\min}/2)^{5/2}
e^{\widetilde{C}_{R_0}(\vert s\vert +\sigma^{(1)}_{\min}/2)^2}}{(\sigma^{(1)}_{\min}/2)^{n}}
\leq\frac{C_{R_0}n!(1+\vert s\vert )^{5/2}
e^{\widetilde{C}_{R_0}\vert s\vert^2}}{(\sigma^{(1)}_{\min}/2)^{n}}\nonumber\\
&\leq\frac{C_{R_0}n!(1+\vert \Rt(s)\vert )^{5/2}
e^{\widetilde{C}_{R_0}\vert \Rt(s)\vert^2}}{(\sigma^{(1)}_{\min}/2)^{n}},
\quad s\in  L_{{\sigma^{(1)}_{\min}/2}}.
\end{align}

The rest of the proof is divided into the following two parts.

\textbf{Part I}: We prove that $G^{(4)}_{\mathcal R}(x,y)$ satisfies (\ref{neq:4.2.37}) uniformly
for all $\theta_{\hat x}\in (0,\theta_c)$ and $y\in B^+_{R_0}$.

Due to $\Rt(s_b)>0$ and $\I(s^2_b)>0$, we let the path $\mathcal{D}_{s_b}$ denote the curve $\{s\in\mathbb{C}:\Rt(s)\in(\Rt(s_b),+\infty),
\I(s)=(\Rt(s))^{-1}\I(s^2_b)/2\}$ with the orientation from $s_b$ to $+\infty$.
Clearly, $\Ima(s^2) =  \Ima(s_b^2)$ on $s \in \mathcal{D}_{s_b}$ and $\mathcal{D}_{s_b}\subset L_{\sigma^{(1)}_{\theta_{\hat{x}}}}$.
It is worth noting that $\mathcal{D}_{s_b}$ is a part of
the steepest descent path of the function $h(s)=-s^2$ crossing $s_b$.
Thus, using (\ref{eq30}), (\ref{eq:27}),  the analyticity of $f(s)$ and Cauchy integral theorem,
we can rewrite $G^{(4)}_{\RR}(x,y)$ as the integral along the path $\mathcal{D}_{s_b}$, that is,
\begin{align*}
G^{(4)}_{\RR}(x,y) =  -{ie^{i k_{+}\vert x \vert}}({2\pi})^{-1}\int_{\mathcal{D}_{s_b}}\sqrt{s-s_b}f(s)e^{-\vert x \vert s^2}ds.
\end{align*}
Define the function $\phi(t):=\left({s_b^2 + t}\right)^{1/2}$.
Since $\Rt(\phi(t))>\Rt(s_b)$ and $\I(\phi^2(t))=\I(s^2_b)$ for $t\in(0,+\infty)$, it is easily
seen that $\phi(t)$ travels from $s_b$ to
$+\infty$ on the path $\mathcal{D}_{s_b}$ as $t$ travels from $0$ to $+\infty$ along the real axis.
Thus introducing the change of variable $s = \phi(t)$ and applying the fact that
$\Ima(s^2)=\Ima(s_b^2)$ for $s \in \mathcal{D}_{s_b}$, we have
\be \label{neq:4.2.34}
G^{(4)}_{\RR}(x,y)=-{ie^{i k_{+}\vert x \vert}}({2\pi})^{-1}e^{-\vert x \vert s_b^2}\int^{+\infty}_0\sqrt t f_2(t)e^{-\vert x \vert t}dt,
\en
where $f_{2}(t) :=[2\left(\phi(t) + s_b\right)^{1/2}\phi(t)]^{-1}f(\phi(t))$. Note that
$\left\vert \big(\phi(t) + s_b\big)^{1/2}\right\vert  \ge \vert s_b\vert^{1/2}$ and
$\left\vert \phi(t)\right\vert \ge \vert s_b \vert $  for $t\in [0,+\infty)$. Hence, combining (\ref{eq:27}) and
(\ref{neq:4.2.34}), we have that for $\vert x \vert$ large enough,
\begin{align}
&\left\vert G^{(4)}_{\RR}(x,y)\right\vert \le {C_{R_0}}\vert s_b \vert^{-3/2}\int^{+\infty}_0
\sqrt t{(1+ \vert s_b\vert +\sqrt t)}^{\frac 52}e^{\widetilde{C}_{R_0}{(\vert s_b\vert^ 2+t)}}e^{-\vert x \vert t}dt\nonumber\\
&\le {C_{R_0}}\vert s_b\vert^{-3/2}\int^{+\infty}_{0}(\sqrt t + t^{7/4})e^{-(\vert x\vert -\widetilde{C}_{R_0})t}dt\nonumber\\
&\le {C_{R_0}}\vert s_b\vert^{-3/2}\left[(\vert x \vert-\widetilde{C}_{R_0})^{-3/2}
+(\vert x \vert-\widetilde{C}_{R_0})^{-11/4}\right]\nonumber\\ \label{eq33}
&\leq C_{R_0}\vert \sin((\theta_c-\theta_{\hat{x}})/2)\vert^{-3/2}\vert x \vert^{-3/2}.
\end{align}
From this, we easily obtain that $G^{(4)}_{\mathcal R}(x,y)$ satisfies (\ref{neq:4.2.37})
uniformly for all $\theta_{\hat x}\in (0,\theta_c)$ and $y\in B^+_{R_0}$.

\textbf{Part II}: We prove that $G^{(4)}_{\mathcal R}(x,y)$ satisfies (\ref{neq:4.2.36}) uniformly
for all $\theta_{\hat x}\in (0,\theta_c)$ and $y\in B^+_{R_0}$. For this aim, we distinguish
between the following two cases.

\textbf{Case 1}: $\theta_{\hat x}\in (0,\theta_c)$ with
$\left\vert \sin\left({(\theta_c-\theta_{\hat x})}/2\right)\right\vert \geq {(2\sqrt{k_{+} \vert x \vert})}^{-1}$.
The proof of this case can be easily obtained by applying (\ref{eq33}).

\textbf{Case 2}: $\theta_{\hat x}\in (0,\theta_c)$ with
$\left\vert \sin\left({(\theta_c-\theta_{\hat x})}/2\right)\right\vert < {(2\sqrt{k_{+} \vert x \vert})}^{-1}$.
In this case, assume that $\vert x \vert$ is large enough s.t. $\vert x \vert\geq 8\left(\sigma^{(1)}_{\min}\right)^{-2}$
and thus $\vert s_b\vert <\sigma^{(1)}_{\min}/4\leq\sigma^{(1)}_{\theta_{\hat{x}}}/4$.
Then from (\ref{eq30}), (\ref{eq:27}), the analyticity of $f(s)$, the fact that $\I(s_b)=\vert s_b\vert /\sqrt{2}<\sigma^{(1)}_{\min}/(4\sqrt{2})$ and Cauchy integral theorem,
it follows that $G^{(4)}_{\mathcal R}(x,y)$ can be rewritten as
\begin{align}
G^{(4)}_{\mathcal R}(x,y)= -{ie^{i k_{+}\vert x \vert}}{(2\pi)^{-1}}\int_{\mathcal{I}_{s_b,s_b+\infty}}
\sqrt{s-s_b}f(s)e^{-\vert x \vert s^2}ds, \label{eq:21}
\end{align}
where $\mathcal{I}_{s_b,s_b+\infty}$ denotes the path $\{s_b+t:0<t<+\infty)\}$
with the orientation from $s_b$ to $s_b+\infty$.
Define $f_1(s):= {\left[f(s)-f(s_b)-(f(s_b)-f(0))(s-s_b)/{s_b}\right]}/{\left[s(s-s_b)\right]}$.
Due to the  analyticity of $f(s)$,
it is clear that
$f_1(s)$
is analytic in $s\in L_{\sigma^{(1)}_{\theta_{\hat{x}}}}$.
Thus similarly to the derivation of (\ref{eq35}),
we can apply (\ref{eq34}) and (\ref{eq:21}) to get that
\begin{align}\label{eq36}
G^{(4)}_{\mathcal R}(x,y)=&-\frac{ie^{i k_{+}\vert x \vert}}{2\pi}\int_{\mathcal{I}_{s_b,s_b+\infty}}\left[f(s_b)\sqrt{s-s_b} + \frac{f(s_b)-f(0)}{s_b}(s-s_b)^{\frac 32}\right ]e^{-\vert x \vert s^2}ds \notag\\
&-\frac{ie^{i k_{+}\vert x \vert}}{2\pi} G^{(4)}_{\mathcal R,1}(x,y),
\end{align}
where
\begin{align}\label{eq38}
G^{(4)}_{\mathcal R,1}(x,y):=\frac{1}{2\vert x \vert}\int_{\mathcal{I}_{s_b,s_b+\infty}}\sqrt{s-s_b}
\left((s-s_b) f'_{1}(s)+\frac 32 f_1(s)\right)e^{-\vert x \vert s^2}ds.
\end{align}
Since $\I (-s_b)<0$, it follows from a change of variable $t=-s$ that
$\int_{\mathcal{I}_{s_b,s_b+\infty}}(s-s_b)^{\beta}e^{-\vert x \vert s^2}ds=2^{-1}e^{-i\beta\pi}F_3(i\vert x \vert,-s_b,\beta)$
for $\beta=1/2$ and $\beta=3/2$, where $F_3$ is defined in Lemma \ref{NLL1}.
This, together with (\ref{eq:j.3}) and (\ref{eq36}), implies that
\begin{align}
&G^{(4)}_{\mathcal R}(x,y)+\frac{ie^{ik_+\vert x \vert}}{2\pi} G^{(4)}_{\mathcal R,1}(x,y)\notag\\
&=-\frac{e^{ik_+\vert x \vert}}{4\pi}\left\{f(s_b)F_3(i{\vert x \vert},-s_b,1/2)
-\frac{f(s_b)-f(0)}{s_b}F_3(i{\vert x \vert},-s_b,3/2)\right\}
\notag\\ \label{eq54}
&=\frac{f(s_b)e^{i(k_+\vert x \vert\cos^2({(\theta_c-\theta_{\hat x})}/2)+\frac{\pi}{2})}}
{\vert x \vert^{\frac34}2^{\frac74}\Gamma(-1/2)}\left(D_{-\frac 32}(\widetilde{u})+\frac{\Gamma(-1/2)}{\Gamma(-3/2)}
\left(\frac{\widetilde{A}-1}{\widetilde{u}}\right)
D_{-\frac 52}(\widetilde{u})\right),
\end{align}
where $\widetilde{u}: =2\sqrt{k_{+}\vert x \vert}e^{\frac{i\pi}4}\sin({(\theta_c-\theta_{\hat x})}/2)$ and
$\widetilde{A}:= f(0)/f(s_b)$.
It is easily seen that $f(s_b)=-h_c(\theta_{\hat{x}})$ with the function $h_c$ defined in (\ref{eq14}).
Similarly to the derivation of (\ref{eq:14}), we can apply Taylor formula and (\ref{eq34}) to deduce that
\be\label{eq37}
\vert f(0)-f(s_b)\vert /\vert s_b\vert \leq C_{R_0},
\en
which implies that
\be\label{eq39}
\vert (\widetilde{A}-1)/\widetilde{u}\vert =\left\vert (f(0)-f(s_b))/\left(\sqrt{2\vert x \vert}s_bf(s_b)\right)\right\vert
\leq C_{R_0}\vert x \vert^{-1/2}.
\en
Similarly to the derivation of (\ref{eq:25}), we can employ the analyticity of $f(s)$, Taylor formula, the fact that $\vert s_b\vert <\sigma^{(1)}_{\theta_{\hat{x}}}/4$ and formulas (\ref{eq31}), (\ref{eq:27}), (\ref{eq34})
and (\ref{eq37}) to get that
\begin{align*}
\vert f_1(s)\vert \le C_{R_0}{(1+\vert s\vert )}^{\frac 52}e^{\widetilde{C}_{R_0}{\vert s\vert }^2},\quad
s\in  L_{\sigma^{(1)}_{\theta_{\hat{x}}}}.
\end{align*}
Then similarly to the derivation of (\ref{eq34}), we can use the above formula, Cauchy
inequality and (\ref{eq31}) to obtain that: for $n=0,1$,
\begin{align*}
\left\vert \frac{d^n f_1}{d s^n}(s)\right\vert
\leq\frac{C_{R_0}n!(1+\vert s\vert )^{5/2}
e^{\widetilde{C}_{R_0}\vert s\vert^2}}{(\sigma^{(1)}_{\min}/2)^{n}},\quad
s\in  L_{{\sigma^{(1)}_{\min}/2}}.
\end{align*}
This, together with the facts that $s-s_b>0$ for $s\in\mathcal{I}_{s_b,s_b+\infty}$,
$\Rt(s_b)>0$ and $\Rt(s^2_b)=0$, implies that for $n=0,1$,
\ben
\left\vert \frac{d^n f_1}{d s^n}(s)e^{-\vert x \vert s^2}\right\vert \leq
C_{R_0}\left(1+\vert s-s_b\vert \right)^{5/2}e^{-\left(\vert x \vert-\widetilde{C}_{R_0}\right)\vert s-s_b\vert^2}
\quad\textrm{for}~s\in\mathcal{I}_{s_b,s_b+\infty}.
\enn
Thus, combining this and (\ref{eq38}) and using a change of variable $t=s-s_b$, we obtain
\begin{align}
\vert G^{(4)}_{\mathcal R,1}(x,y)\vert &\leq C_{R_0}\vert x \vert^{-1}\int^{+\infty}_0
\left(t^{1/2}+t^4\right)e^{-\left(\vert x \vert-\widetilde{C}_{R_0}\right)t^2}dt\nonumber\\ \label{eq40}
&\leq C_{R_0}\vert x \vert^{-1}\left((\vert x \vert-\widetilde{C}_{R_0})^{-3/4}+(\vert x \vert-\widetilde{C}_{R_0})^{-5/2}\right)
\leq C_{R_0}\vert x \vert^{-7/4}
\end{align}
for $\vert x \vert$ large enough. Further, since $\vert \widetilde{u}\vert <1$ in this case, it follows from the
analyticity of the function $D_\beta$ with $\beta\in\mathbb{R}$ that $\vert D_{-3/2}(\widetilde{u})\vert \leq C$
and $\vert D_{-5/2}(\widetilde{u})\vert \leq C$ for some constant $C>0$. This, together with
(\ref{eq54}), (\ref{eq39}), (\ref{eq40}) and the fact that
$\vert \cos((\theta_c-\theta)/2)\vert \geq\cos(\theta_c/2)$ for any $\theta\in(0,\theta_c)$, implies that
\ben
\vert G^{(4)}_{\mathcal R}(x,y)\vert \le {C_{R_0}}{\vert x \vert^{-3/4}}(1+\vert x \vert^{-1/2})
+{C_{R_0}}{\vert x \vert}^{-7/4}\le{C_{R_0}}{\vert x \vert^{-3/4}}.
\enn

From the discussions in the above two cases, we obtain that $G^{(4)}_{\mathcal R}(x,y)$
satisfies (\ref{neq:4.2.36}) uniformly for all $\theta_{\hat x}\in (0,\theta_c)$ and
$y\in B^+_{R_0}$. Therefore, the proof is complete.
\end{proof}

Based on Lemmas \ref{Le:n4.2.1}, \ref{NLe:3.2.4} and \ref{Le:n4.2.2}, we get the following uniform
far-field asymptotic estimates of the two-layered Green function and its derivatives.

\begin{theorem}\label{NLe:2}
Assume that $k_+>k_-$ and let $R_0>0$ be an arbitrary fixed number. Suppose that
$y=(y_1,y_2)=\vert y \vert(\cos \theta_{\hat y}, \sin \theta_{\hat y}) \in B^+_{R_0}\cup B^-_{R_0}$
with $\theta_{\hat{y}}\in(0,\pi)\cup(\pi,2\pi)$ and
$x=(x_1,x_2)=\vert x \vert\hat x=\vert x \vert(\cos\theta_{\hat x},\sin\theta_{\hat x})\in {\Rb}_{+}^2 $
with $\theta_{\hat{x}}\in(0,\pi)$, then we have the asymptotic behaviors
\begin{align} \label{NNe:3.2.1}
G(x,y)&=  \frac{e^{ik_{+}\vert x \vert}}{\sqrt{\vert x \vert}}G^{\infty}(\hat{x},y) + G_{Res}(x,y),\\
\label{eq42}
\nabla_y G(x,y) &=  \frac{e^{ik_{+}\vert x \vert}}{\sqrt{\vert x \vert}}H^{\infty}(\hat x,y ) +  H_{Res}(x,y),
\end{align}
where $G^{\infty}$ and $H^{\infty}$ are given by (\ref{eq:t7}) and (\ref{eq:4.2.18}), respectively,
and $G_{Res}$ and $H_{Res}$ satisfy
\begin{align}\label{neq:4.2.29}
&\left\vert G_{Res}(x,y)\right\vert,~
\vert H_{Res}(x,y)\vert  \le{C_{R_0}}{\vert x \vert^{-3/4}},\quad \vert x \vert\rightarrow+\infty,
\end{align}
uniformly for all $\theta_{\hat{x}}\in(0,\pi)$ and $y\in B^+_{R_0}\cup B^-_{R_0}$,
\begin{align}\label{neq:4.2.30}
&\left\vert G_{Res}(x,y)\right\vert,~\vert H_{Res}(x,y)\vert  \le{C_{R_0}}{{{\left\vert\theta_c-\theta_{\hat x}
\right\vert}^{-\frac 32}}\vert x \vert^{-\frac 32}},\quad\vert x \vert\rightarrow+\infty,
\end{align}
uniformly for all $\theta_{\hat{x}}\in (0,\theta_c)\cup(\theta_c,\pi/2)$ and
$y\in B^+_{R_0}\cup B^-_{R_0}$, and
\begin{align}\label{neq:4.2.31}
&\left\vert G_{Res}(x,y)\right\vert,~\vert H_{Res}(x,y)\vert \le{C_{R_0}}{{\left\vert\pi-\theta_c-\theta_{\hat x}\right\vert}^{-\frac32}
\vert x \vert^{-\frac 32}},\quad\vert x \vert\rightarrow+\infty,
\end{align}
uniformly for all $\theta_{\hat{x}}\in \left.\left[\pi/2,\pi-\theta_c\right.\right)\cup(\pi-\theta_c,\pi)$
and $y\in B^+_{R_0}\cup B^-_{R_0}$.
Here, the constant $C_{R_0}>0$ is independent of $x$ and $y$ but dependent of $R_0$.
\end{theorem}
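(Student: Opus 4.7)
The plan is to assemble the uniform asymptotics for $G(x,y)$ and $\nabla_y G(x,y)$ by combining the three lemmas just proved, which together cover $G_{\mathcal{R}}(x,y)$ for $\theta_{\hat{x}}\in(0,\pi/2)$, with strictly analogous statements for the three parts of the problem that remain: $G_{\mathcal{R}}(x,y)$ with $\theta_{\hat{x}}\in[\pi/2,\pi)$, $G_{\mathcal{T}}(x,y)$ with $\theta_{\hat{x}}\in(0,\pi)$, and the gradients $\nabla_y G(x,y)$. The representation (\ref{e:1.2}) together with the classical expansion (\ref{NNe6}) for $\Phi(x,y)$ reduces matters to analyzing $G_{\mathcal{R}}$ and $G_{\mathcal{T}}$ separately; the $O(\vert x \vert^{-3/2})$ residual coming from (\ref{NNe6}) is absorbed into each of (\ref{neq:4.2.29})--(\ref{neq:4.2.31}) and introduces no difficulty.

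For $\theta_{\hat{x}}\in(0,\pi/2)$ I would distinguish the sub-cases $\theta_{\hat{x}}\in[\theta_c,\pi/2)$ and $\theta_{\hat{x}}\in(0,\theta_c)$. In the first, Lemma \ref{Le:n4.2.1}(i) gives $G_{\mathcal{R}}=G^{(1)}_{\mathcal{R}}+G^{(2)}_{\mathcal{R}}$, and Lemma \ref{NLe:3.2.4} supplies the two leading-order terms together with the estimates (\ref{neq:4.2.32}), (\ref{neq:4.2.23}), (\ref{neq:4.2.22}). Summing these coefficients and using the identity $\mathcal{R}(\theta_{\hat{x}})=(2\cos^2\theta_{\hat{x}}-1-n^2+2i\sin\theta_{\hat{x}}\mathcal{S}(\cos\theta_{\hat{x}},n))/(n^2-1)$ that follows from (\ref{eq:0}) reproduces exactly the reflection coefficient in $G^{\infty}$ at (\ref{eq:t7}). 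In the second sub-case, Lemma \ref{Le:n4.2.1}(ii) splits $G_{\mathcal{R}}$ into three pieces whose asymptotics come from Lemma \ref{Le:n4.2.2}; since $G^{(4)}_{\mathcal{R}}$ decays faster than $\vert x \vert^{-3/4}$ and the leading term of $G^{(3)}_{\mathcal{R}}$ carries the same coefficient as $G^{(2)}_{\mathcal{R}}$ did (noting $\widetilde{\mathcal{S}}(\cos\theta_{\hat{x}},n)=\mathcal{S}(\cos\theta_{\hat{x}},n)$ on $(0,\theta_c)$), the same form of $\mathcal{R}(\theta_{\hat{x}})$ emerges, yielding (\ref{neq:4.2.29}) and (\ref{neq:4.2.30}) on this range.

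Next I would cover $\theta_{\hat{x}}\in[\pi/2,\pi)$ for $G_{\mathcal{R}}$ and the full range $\theta_{\hat{x}}\in(0,\pi)$ for $G_{\mathcal{T}}$ by the same methodology. For the steepest descent path $\mathcal{D}$ attached to the larger angle, Remark \ref{re2} shows that the governing intersection with the branch cuts is now the endpoint of $\mathcal{I}^{-}_{1}$, namely $\pi-\theta_c$: the path avoids $\mathcal{A}_1\cup\mathcal{A}_2$ for $\theta_{\hat{x}}\in(\pi/2,\pi-\theta_c)$, is tangent at $\theta_{\hat{x}}=\pi-\theta_c$, and crosses $\mathcal{I}^{-}_{1}$ for $\theta_{\hat{x}}\in(\pi-\theta_c,\pi)$. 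Rerunning Lemmas \ref{Le:n4.2.1}--\ref{Le:n4.2.2} with $\theta_c$ replaced by $\pi-\theta_c$ and with the obvious reflected versions of $s_b$, $H_\theta$, and $\zeta(s)$ produces (\ref{NNe:3.2.1}) for $G_{\mathcal{R}}$ with residual (\ref{neq:4.2.31}). For $G_{\mathcal{T}}(x,y)$ with $y\in\mathbb{R}^{2}_{-}$, the integrand in (\ref{eq:3.2.2}) has amplitude $1/(\mathcal{S}(z,1)+\mathcal{S}(z,n))$ and the exponent contains $\mathcal{S}(z,n)y_2$ instead of $-\mathcal{S}(z,1)|x_2+y_2|$, so the branch cuts of $\mathcal{S}(\cos\zeta,n)$ again control every deformation step; the same three-case dissection near $\theta_c$ and near $\pi-\theta_c$ yields the transmission form of $G^{\infty}$ in (\ref{eq:t7}) with identical residual bounds.

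Finally, $\nabla_y G(x,y)$ is handled by differentiating inside the Sommerfeld integrals (\ref{eq:3.2.2}). Each $\partial_{y_j}$ brings down an additional factor that is polynomial in $z$ and in $\mathcal{S}(z,1)$ or $\mathcal{S}(z,n)$, which after the change of variable $\zeta=\zeta(s)$ of Lemma \ref{le:4} merely multiplies the amplitude by a further polynomial factor in $s$; the polynomial-times-Gaussian bound (\ref{Ne3}) and the Cauchy-inequality bounds (\ref{Ne16}), (\ref{eq34}) on its derivatives are unaffected in type, so the machinery of Lemmas \ref{NLe:3.2.4}--\ref{Le:n4.2.2} carries through verbatim and delivers the vector leading term $H^{\infty}(\hat{x},y)$ in (\ref{eq:4.2.18}) with residuals of the same three sizes. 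The main obstacle throughout is the bookkeeping of branch-cut crossings for $\theta_{\hat{x}}$ near the critical angles $\theta_c$ and $\pi-\theta_c$, but once Lemma \ref{le:4} has been secured this is reduced to the parabolic-cylinder-function asymptotics of Lemma \ref{NLL1}, and combining everything via (\ref{e:1.2}) and (\ref{NNe6}) finishes the proof.
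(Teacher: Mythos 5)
Your proposal is essentially correct, and for the core range $\theta_{\hat{x}}\in(0,\pi/2)$, for $G_{\mathcal T}$, and for $\nabla_y G$ it coincides with the paper's proof: combine (\ref{NNe6}), (\ref{e:1.2}) with Lemmas \ref{Le:n4.2.1}, \ref{NLe:3.2.4}, \ref{Le:n4.2.2}, recombine the leading coefficients into $\mathcal R(\theta_{\hat x})$ (your algebraic identity for $\mathcal R$ is exactly what is needed, using $\mathcal S(\cos\theta,n)^2=\cos^2\theta-n^2$ and $\widetilde{\mathcal S}=\mathcal S$ on $(0,\theta_c)$), and treat the gradient by differentiating the Sommerfeld integrals, noting that the extra factors are polynomial in $\cos\zeta$, $\sin\zeta$ (or involve the same $\mathcal S(\cos\zeta,n)$ already present), so the Gaussian-type bounds and Cauchy estimates are unchanged and $H^\infty=\nabla_y G^\infty$. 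The one place where you genuinely diverge is $\theta_{\hat{x}}\in[\pi/2,\pi)$: you propose to rerun the whole steepest-descent machinery with the second critical angle $\pi-\theta_c$ (endpoint of $\mathcal I^-_1$) playing the role of $\theta_c$, with reflected versions of $s_b$, $H_\theta$, $\zeta(s)$. This works, but the paper avoids the duplication by a symmetry argument: since $\mathcal S(\xi,a)=\mathcal S(-\xi,a)$, one has $G((-x_1,x_2),(-y_1,y_2))=G(x,y)$ and $\theta_{\hat v}=\pi-\theta_{\hat x}$, so the estimates on $(\pi/2,\pi)$ follow verbatim from those on $(0,\pi/2)$, and only the single angle $\theta_{\hat{x}}=\pi/2$ is treated separately (by the fixed-angle $O(\vert x\vert^{-3/2})$ argument as in Theorem \ref{NLe:4}, which is what (\ref{neq:4.2.31}) demands there since $\vert\pi-\theta_c-\pi/2\vert$ is a fixed constant). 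Your rerun route buys nothing extra and costs a second pass through Lemmas \ref{le:4}--\ref{Le:n4.2.2}; if you take it, make sure the endpoint $\theta_{\hat{x}}=\pi/2$ is explicitly included (your lemma analogues, like the originals, would naturally be stated on an open or half-open angular interval), and note that it needs the full $\vert x\vert^{-3/2}$ rate, not just $\vert x\vert^{-3/4}$. Two further small imprecisions, neither fatal: $G^{(4)}_{\mathcal R}$ does not ``decay faster than $\vert x\vert^{-3/4}$'' uniformly — the correct statement is the pair of bounds (\ref{neq:4.2.36})--(\ref{neq:4.2.37}), which is exactly what is needed to absorb it into the residual; and at $\theta_{\hat x}=\pi-\theta_c$ the path meets the endpoint of the branch cut (as at $\theta_c$), rather than being tangent to it.
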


\begin{proof}
First, we consider the asymptotic behavior of $G(x,y)$.
To this end, we distinguish between the following four cases.

\textbf{Case 1}: $\theta_{\hat{x}}\in(0,\pi/2)$ and $\theta_{\hat y}\in (0,\pi)$.
Note that ${\mathcal S}(\cos\theta_{\hat x},n)= \widetilde{\mathcal S}(\cos\theta_{\hat x},n)$
for $\theta_{\hat x}\in (0,\theta_c)$. Thus, it follows from
(\ref{NNe6}), (\ref{e:1.2}) and Lemmas \ref{Le:n4.2.1}, \ref{NLe:3.2.4} and \ref{Le:n4.2.2}
that $G(x,y)$ has the asymptotic behavior
(\ref{NNe:3.2.1}) with $G_{Res}(x,y)$ satisfying (\ref{neq:4.2.29}) uniformly for all
$\theta_{\hat{x}}\in(0,\pi/2)$ and $y\in B^+_{R_0}$ and satisfying (\ref{neq:4.2.30})
uniformly for all $\theta_{\hat{x}}\in (0,\theta_c)\cup(\theta_c,\pi/2)$ and $y\in B^+_{R_0}$.

\textbf{Case 2}: $\theta_{\hat{x}}=\pi/2$ and $\theta_{\hat y}\in (0,\pi)$.
By using similar arguments as in the proof of Theorem \ref{NLe:4}, we can obtain that
$G(x,y)$ has the asymptotic behavior
(\ref{NNe:3.2.1}) with $G_{Res}(x,y)$ satisfying $\vert G_{Res}(x,y)\vert  \le {C_{R_0}}{\vert x \vert^{-3/2 }}$
uniformly for $\theta_{\hat{x}}=\pi/2$ and all $y\in B^+_{R_0}$.
This directly implies that $G_{Res}(x,y)$ in (\ref{NNe:3.2.1}) satisfies (\ref{neq:4.2.29})
and (\ref{neq:4.2.31}) uniformly for $\theta_{\hat{x}}=\pi/2$ and all $y\in B^+_{R_0}$.

\textbf{Case 3}: $\theta_{\hat{x}}\in\left(\pi/2,\pi\right)$ and $\theta_{\hat y}\in (0,\pi)$.
Since $\mathcal S(\xi,a)=\mathcal S(-\xi,a)$ for any $\xi\in\mathbb{R}$ and $a>0$, it easily follows
from (\ref{e:1.2}) and (\ref{eq4}) that
\be\label{eq48}
G(v,w)=G(x,y)
\en
with $v:=(-x_1,x_2)$ and $w:=(-y_1,y_2)$. Note that $\theta_{\hat{v}}=\pi-\theta_{\hat{x}}$.
Thus, with the aid of the results in Case 1, we deduce that
$G(x,y)$ has the asymptotic behavior
(\ref{NNe:3.2.1}) with $G_{Res}(x,y)$ satisfying (\ref{neq:4.2.29}) uniformly for all $\theta_{\hat{x}}\in\left(\pi/2,\pi\right)$ and $y\in B^+_{R_0}$
and satisfying (\ref{neq:4.2.31})
uniformly for all $\theta_{\hat{x}}\in \left(\pi/2,\pi-\theta_c\right)\cup(\pi-\theta_c,\pi)$
and $y\in B^+_{R_0}$.

\textbf{Case 4}: $\theta_{\hat{x}}\in\left(0,\pi\right)$ and $\theta_{\hat y}\in (\pi,2\pi)$.
Analogous to the above three cases, we can also study the asymptotic behavior of
$G_{\mathcal T}(x,y)$ by using similar arguments as in the proofs of Theorem \ref{NLe:4}
and Lemmas \ref{Le:n4.2.1}, \ref{NLe:3.2.4} and \ref{Le:n4.2.2}. Consequently, for $y\in B^-_{R_0}$,
we can obtain from (\ref{e:1.2}) that  $G(x,y)$ has the asymptotic behavior
(\ref{NNe:3.2.1}) with $G_{Res}(x,y)$ satisfying all its properties presented in this theorem.

Secondly, we consider the asymptotic behavior of $\nabla_y G(x,y)$. By (\ref{eq4}) and (\ref{eq5}) we have
\begin{align*}
&\nabla_y  G_{\mathcal R}(x,y) =\notag\\
&\frac{-1}{4\pi}\int_{-\infty}^{+\infty}\frac{\mathcal S(\xi,{k_{+}})
-\mathcal S(\xi,{k_{-}})}{\mathcal S(\xi,{k_{+}})+\mathcal S(\xi,{k_{-}})}
\frac{e^{-\mathcal S(\xi,{k_{+}})\vert x_2+y_2\vert }}{\mathcal S(\xi,{k_{+}})}e^{i\xi(x_1-y_1)}
\begin{pmatrix}
 i\xi \\
 \mathcal S(\xi,{k_{+}})
\end{pmatrix}^T
d\xi,&& y_2>0,\\
&\nabla_y  G_{\mathcal T}(x,y) = \frac{1}{2\pi} \int_{-\infty}^{+\infty}
\frac{{e}^{\mathcal S(\xi,{k_{-}})y_2-\mathcal S(\xi,{k_{+}})x_2}}{\mathcal S(\xi,{k_{+}})
+\mathcal S(\xi,{k_{-}})}e^{i\xi(x_1-y_1)}
\begin{pmatrix}
-i\xi \\
\mathcal S(\xi,{k_{-}})
\end{pmatrix}^T
d\xi, &&y_2<0.
\end{align*}
From \cite{DK13}, it is easy to see that the Hankel function $H^{(1)}_0$  satisfies
\begin{align*}
\nabla_y\left(\frac{i}{4}H^{(1)}_0(k_{+}\vert x-y\vert )\right) = \frac{e^{ik_{+}\vert x \vert}}{\sqrt{\vert x \vert}}e^{-i\frac{\pi}4}{\sqrt{\frac{k_+}{8\pi}}}
\left(\hat x e^{-ik_{+}\hat x\cdot y} +O\left({\vert x \vert^{-1}}\right)\right)
\end{align*}
as $\vert x \vert\rightarrow+\infty$ uniformly for all $\theta_{\hat{x}}\in(0,\pi)$ and $y\in\mathbb{R}^2$ with $\vert y \vert< R_0$.
Note that $H^\infty(\hat{x},y)=\nabla_y G^\infty(\hat{x},y)$.
Thus, using (\ref{e:1.2}) and similar arguments as in the derivations of the asymptotic behavior
of $G(x,y)$, we can obtain that $\nabla_y G(x,y)$
has the asymptotic behavior (\ref{eq42}) with $H_{Res}(x,y)$ satisfying all its properties presented
in this theorem.
\end{proof}

\begin{remark}\label{re3}{\rm
Let $y$ be an arbitrary fixed point in $\Rb^2_+\cup\Rb^2_-$ and $G_{Res}(x,y)$ be given as in
Theorem \ref{NLe:2}. It has been proved in \cite[Section 2.3.4]{Car17} that
\be\label{eq47}
G_{Res}(x,y)=O(\vert x \vert^{-3/2}),\quad \vert x \vert\rightarrow+\infty,
\en
for all $\theta_{\hat{x}}\in(0,\pi)\ba\{\theta_c,\pi-\theta_c\}$.
The above result is also a direct consequence of Theorem \ref{NLe:2}.
Moreover, we can further deduce that
\be\label{eq49}
\lim_{\vert x \vert\rightarrow+\infty}\vert G_{Res}(x,y)\vert \vert x \vert^{3/4}\neq0\quad
\textrm{for}~\theta_{\hat{x}}\in\{\theta_c,\pi-\theta_c\},
\en
which directly implies that (\ref{eq47}) does not hold for $\theta_{\hat{x}}\in\{\theta_c,\pi-\theta_c\}$.
In fact, for the case $y\in\Rb^2_+$, (\ref{eq49}) can be easily proved by Lemmas \ref{Le:n4.2.1}
and \ref{NLe:3.2.4} and formulas (\ref{NNe6}), (\ref{e:1.2}), (\ref{eq45}), (\ref{eq46}) and (\ref{eq48})
(note that $g(0)$ in (\ref{eq45}) is equal to non-zero value $h_c(\theta_c)$ with $h_c$ defined in (\ref{eq14})).
For the case $y\in\Rb^2_-$, (\ref{eq49}) can be proved by using similar arguments for $G_{\mathcal T}(x,y)$.
Furthermore, it can be seen from (\ref{eq49}) that the uniform asymptotic estimate of $G_{Res}(x,y)$
in (\ref{neq:4.2.29}) is essentially sharp.
Due to these results, we then call $\theta_c$ and $\pi-\theta_c$ the critical angles for the case $k_+>k_-$.
}
\end{remark}

\section{Uniform far-field asymptotic analysis of $G(x,y)$ with $x\in\Rb^2_-$}\label{sec:3}

In this section, we study the uniform far-field asymptotics of $G(x,y)$ with $x\in\Rb^2_-$.
Let $\theta_c$ be defined as in (\ref{eq43}).
{\color{hw-a}
Note that in the case $k_+<k_-$, there are two critical angles $\pi+\theta_c$ and $2\pi-\theta_c$ for $G(x,y)$.
However, the difficulties in the investigation of the uniform far-field asymptotics of $G(x,y)$ for the angles $\theta_{\hat{x}}$ in the vicinity of these two critical angles can be resolved by using a property
of $G(x,y)$ presented below.
}

Let the functions $\widetilde {\mathcal R}(\theta)$ and $\widetilde {\mathcal T}(\theta)$ be defined by
\begin{align*}
\widetilde {\mathcal R}(\theta):=\frac{{i\sin\theta-\mathcal S(\cos\theta,1/n)}}
{{i\sin\theta+\mathcal S{(\cos\theta,1/n)}}},\quad
\widetilde {\mathcal T}(\theta):={\widetilde{\RR}(\theta)}+1 \quad \textrm{for}~
\theta\in\mathbb{R}.
\end{align*}
By \cite[formula (2.27)]{Car17}, $G(x,y)$ with $x=(x_1,x_2)\in \Rb^{2}_-$ and
$y=(y_1,y_2)\in\Rb^{2}_+\cup\Rb^{2}_-$ has the following explicit formula
{\color{hw-a}(see also \cite[Appendix A]{Lpj})}
\begin{align}\label{eq:4.1}
G(x,y) =\left\{\begin{aligned}
& G_{\widetilde{\mathcal{T}}}(x,y),
&& x\in\Rb^2_-, && y\in\Rb^2_+, \\
&\frac{i}{4}H^{(1)}_0(k_{-}\vert x- y\vert ) + G_{\widetilde{\mathcal{R}}}(x,y),
&& x\in\Rb^2_-, && y\in\Rb^2_-,
\end{aligned}\right.
\end{align}
where $G_{\widetilde{\mathcal{T}}}$ and $G_{\widetilde{\mathcal{R}}}$ are given by
the Sommerfeld integrals as follows
\begin{align}
\label{eq61}
& G_{\widetilde{\mathcal{T}}}(x,y):= \frac{1}{2\pi}\int_{-\infty}^{+\infty}
\frac{e^{-\mathcal S(\xi,{k_{+}})y_2+\mathcal S(\xi,{k_{-}})x_2}}{\mathcal S(\xi,{k_{+}})
+\mathcal S(\xi,{k_{-}})}e^{i\xi(x_1-y_1)}d\xi ,\\
\label{eq62}
& G_{\widetilde{\mathcal{R}}}(x,y):=  \frac{1}{4\pi}\int_{-\infty}^{+\infty}
\frac{\mathcal S(\xi,{k_{-}})-\mathcal S(\xi,{k_{+}})}{\mathcal S(\xi,{k_{+}})
+\mathcal S(\xi,{k_{-}})}\frac{e^{-\mathcal S(\xi,{k_{-}})\vert x_2+y_2\vert }}
{\mathcal S(\xi,{k_{-}})}e^{i\xi(x_1-y_1)}d\xi.
\end{align}
Let $G^*(x,y):= G(x',y')$ for $x\in \Rb^2$ and $y \in \Rb^2_+\cup \Rb^2_-$ with $x'$ and $y'$
defined in Section \ref{sec:2}.
By formulas (\ref{e:1.2}) and (\ref{eq:4.1}), it is easy to verify that $G^*(x,y)$ is the
two-layered Green function satisfying (\ref{eq:0.3})--(\ref{eq:0.4}) with the wave numbers
$k_+$ and $k_-$ replaced by $k_-$ and $k_+$, respectively.
Note further that $x'\in\mathbb{R}^2_+$  and $\theta_{\hat{x'}}=2\pi-\theta_{\hat{x}}$
for $x\in\mathbb{R}^2_-$.
Therefore, applying Theorems \ref{NLe:4} and \ref{NLe:2} to $G^*(x,y)$, we can directly obtain
the following two theorems for the uniform far-field asymptotics of $G(x,y)$ with $x\in \Rb^2_-$.

\begin{theorem}\label{le:3.2}
Assume that $k_+<k_-$ and let $R_0>0$ be an arbitrary fixed number. Suppose that
$y=(y_1,y_2)\in B^+_{R_0}\cup B^-_{R_0}$ and
$x= (x_1,x_2)= \vert x \vert\hat x=\vert x \vert(\cos \theta_{\hat x},\sin\theta_{\hat x})\in {\Rb}_{-}^2$
with $\theta_{\hat{x}}\in(\pi,2\pi)$, then we have the asymptotic behaviors
\begin{align*}
G(x,y)&=  \frac{e^{ik_{-}\vert x \vert}}{\sqrt{\vert x \vert}}G^{\infty}(\hat{x},y) + G_{Res}(x,y),\\
\nabla_y G(x,y) &=  \frac{e^{ik_{-}\vert x \vert}}{\sqrt{\vert x \vert}}H^{\infty}(\hat x,y ) +  H_{Res}(x,y),\notag
\end{align*}
where $G^{\infty}$, $H^{\infty}$ are defined by
\begin{align}\label{eq:3.1}
G^{\infty}(\hat x,y):= \frac{e^{i\frac{\pi}4}}{\sqrt{8\pi k_{-}}}
\left\{\begin{aligned}
&\widetilde {\mathcal T}(\theta_{\hat x})e^{-ik_{-}(y_1\cos\theta_{\hat x}
-iy_2\mathcal S{(\cos\theta_{\hat x},1/n)})},
&&\hat{x}\in\mathbb{S}^1_-,&& y\in\Rb^2_+,\\
&e^{-i k_{-}{\hat x} \cdot y} + \widetilde {\mathcal R}(\theta_{\hat x}) e^{-ik_{-}{\hat x}\cdot y{'}},
&&\hat{x}\in\mathbb{S}^1_-,&& y\in\Rb^2_-,\\
\end{aligned}\right.
\end{align}
\begin{align}\label{eq:3.2}
H^{\infty}(\hat x,y ) :=
{e^{-i\frac{\pi}4}}\sqrt{\frac{k_{-}}{8\pi}}
\left\{
\begin{aligned}
&\widetilde{\mathcal T}(\theta_{\hat x})e^{-ik_{-}(y_1\cos\theta_{\hat x}
-iy_2\mathcal S{(\cos\theta_{\hat x},1/n)})}
\begin{pmatrix}
\cos \theta_{\hat x}\\
 -i \mathcal S{(\cos\theta_{\hat x},1/n)}
\end{pmatrix}^T,\\
&\qquad\qquad\qquad\qquad\qquad\qquad\qquad\qquad
\hat{x}\in\mathbb{S}^1_-,\quad y\in\Rb^2_+, \\
&e^{-i k_{-}\hat x \cdot y} \begin{pmatrix}
 \cos \theta_{\hat x} \\
 \sin \theta_{\hat x}
\end{pmatrix}^T
+  \widetilde{\mathcal R}(\theta_{\hat x})e^{-ik_{-}{\hat x}\cdot y{'}}
\begin{pmatrix}
\cos \theta_{\hat x} \\
 -\sin \theta_{\hat x}
\end{pmatrix}^T,\\
&\qquad\qquad\qquad\qquad\qquad\qquad\qquad\qquad
\hat{x}\in\mathbb{S}^1_-,\quad y\in\Rb^2_-,
\end{aligned}
\right.
\end{align}
and $G_{Res}$ and $H_{Res}$ satisfy the estimates
\begin{align}\label{eq52}
&\left\vert G_{Res}(x,y)\right\vert,~
\vert H_{Res}(x,y)\vert  \le{C_{R_0}}{\vert x \vert^{-3/4}},\quad \vert x \vert\rightarrow+\infty,
\end{align}
uniformly for all $\theta_{\hat{x}}\in(\pi,2\pi)$ and $y\in B^+_{R_0}\cup B^-_{R_0}$,
\begin{align*}
&\left\vert G_{Res}(x,y)\right\vert,~
\vert H_{Res}(x,y)\vert  \le{C_{R_0}}{{{\left\vert
\pi+\theta_c-\theta_{\hat x}
\right\vert}^{-\frac 32}}\vert x \vert^{-\frac 32}},\quad\vert x \vert\rightarrow+\infty,
\end{align*}
uniformly for all $\theta_{\hat{x}}\in(\pi,\pi+\theta_c)\cup\left.\left(\pi+\theta_c,3\pi/2\right.\right]$
and $y\in B^+_{R_0}\cup B^-_{R_0}$, and
\begin{align*}
&\left\vert G_{Res}(x,y)\right\vert,~
\vert H_{Res}(x,y)\vert  \le{C_{R_0}}{{{\left\vert
2\pi-\theta_c-\theta_{\hat x}
\right\vert}^{-\frac 32}}\vert x \vert^{-\frac 32}},\quad\vert x \vert\rightarrow+\infty,
\end{align*}
uniformly for all $\theta_{\hat{x}}\in  (3\pi/2,2\pi-\theta_c)\cup(2\pi-\theta_c,2\pi)$ and $y\in B^+_{R_0}\cup B^-_{R_0}$.
Here,
the constant $C_{R_0}>0$ is independent of $x$ and $y$ but dependent of $R_0$.
\end{theorem}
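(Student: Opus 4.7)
The plan is to reduce Theorem \ref{le:3.2} to the already-established Theorem \ref{NLe:2} through the reflection symmetry $G^{*}(x,y):=G(x',y')$ flagged in the paragraph immediately preceding the statement. First I would verify that $G^{*}(x,y)$ is itself the two-layered Green function solving (\ref{eq:0.3})--(\ref{eq:0.4}) but with the wave numbers $k_{+}$ and $k_{-}$ interchanged; this is a direct check against the Sommerfeld-integral formulas (\ref{e:1.2}) and (\ref{eq:4.1}), using the invariance of the interface conditions under reflection across $\Gamma_0$. Since $(x')'=x$ and $(y')'=y$, we then obtain the key identity $G(x,y)=G^{*}(x',y')$.

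Second, because the present hypothesis is $k_{+}<k_{-}$, the ``new'' wave numbers of $G^{*}$ satisfy ``new $k_{+}$''$=k_{-}>k_{+}=$ ``new $k_{-}$'', which is exactly the regime of Theorem \ref{NLe:2}. For $x\in\Rb^2_-$ with $\theta_{\hat x}\in(\pi,2\pi)$ we have $x'\in\Rb^2_+$ with $\vert x'\vert=\vert x\vert$ and $\theta_{\hat{x'}}=2\pi-\theta_{\hat x}\in(0,\pi)$, so Theorem \ref{NLe:2} applies to $G^{*}$ at $x'$. It yields an asymptotic expansion of $G(x,y)=G^{*}(x',y')$ with leading term $\vert x\vert^{-1/2}e^{ik_{-}\vert x\vert}G^{*,\infty}(\hat{x'},y')$ and residual $G^{*}_{Res}(x',y')$, where $G^{*,\infty}$ is built from the coefficients $\mathcal R^{*},\mathcal T^{*}$ obtained by replacing $n$ with $1/n$ in (\ref{eq:0}).

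Third, I would translate the result back to the original coordinates. The critical angles $\theta_c$ and $\pi-\theta_c$ appearing for $\theta_{\hat{x'}}$ in Theorem \ref{NLe:2} map bijectively to $\theta_{\hat x}=2\pi-\theta_c$ and $\theta_{\hat x}=\pi+\theta_c$, with $\vert\theta_c-\theta_{\hat{x'}}\vert=\vert 2\pi-\theta_c-\theta_{\hat x}\vert$ and $\vert\pi-\theta_c-\theta_{\hat{x'}}\vert=\vert\pi+\theta_c-\theta_{\hat x}\vert$; the uniform $\vert x\vert^{-3/4}$ bound and the sharper $\vert\theta-\theta_{\hat x}\vert^{-3/2}\vert x\vert^{-3/2}$ bounds transfer verbatim. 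Identifying the far-field pattern amounts to checking the algebraic identities $\mathcal R^{*}(\theta_{\hat{x'}})=\widetilde{\mathcal R}(\theta_{\hat x})$ and $\mathcal T^{*}(\theta_{\hat{x'}})=\widetilde{\mathcal T}(\theta_{\hat x})$, both of which follow from $\cos\theta_{\hat{x'}}=\cos\theta_{\hat x}$, $\sin\theta_{\hat{x'}}=-\sin\theta_{\hat x}$, together with $\hat{x'}\cdot y'=\hat x\cdot y$ and $\hat{x'}\cdot y=\hat x\cdot y'$. These identifications reproduce exactly the expression (\ref{eq:3.1}) for $G^{\infty}(\hat x,y)$ in both source regimes $y\in\Rb^2_+$ and $y\in\Rb^2_-$.

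Finally, for the gradient statement I would apply the chain rule to $\nabla_y G(x,y)=\nabla_y[G^{*}(x',y')]$: the $y_1$-derivative transfers directly while the $y_2$-derivative picks up a minus sign. Substituting the asymptotic for $\nabla_{y'} G^{*}(x',y')$ furnished by Theorem \ref{NLe:2} and combining this sign with $-\sin\theta_{\hat{x'}}=\sin\theta_{\hat x}$ reproduces exactly the column vectors in (\ref{eq:3.2}), and the residual estimates (\ref{eq52}) and the sharper critical-angle bounds follow from those of $H^{*}_{Res}$. The only delicate point -- and the main potential source of error -- is purely bookkeeping: one must consistently track the sign flips in $\sin\theta_{\hat x}$, the second component of $\hat x$, and the reflected sources $y',(y')'=y$ across both the reflection identity and the gradient, so that the signs entering $\widetilde{\mathcal R},\widetilde{\mathcal T}$ and the vector coefficients of $H^{\infty}$ emerge correctly. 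Once this is done carefully, the proof is complete.
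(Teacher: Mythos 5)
Your proposal is correct and follows essentially the same route as the paper: the paper also defines $G^{*}(x,y):=G(x',y')$, observes it is the two-layered Green function with $k_{+}$ and $k_{-}$ interchanged and $\theta_{\hat{x'}}=2\pi-\theta_{\hat x}$, and then obtains Theorem \ref{le:3.2} directly by applying Theorem \ref{NLe:2} to $G^{*}$. Your additional bookkeeping (the identities $\mathcal R^{*}(\theta_{\hat{x'}})=\widetilde{\mathcal R}(\theta_{\hat x})$, $\hat{x'}\cdot y'=\hat x\cdot y$, the sign flip in the $y_2$-derivative, and the critical-angle relabeling) is exactly what the paper leaves as a routine verification.
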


\begin{theorem}\label{le:3.1}
Assume that $k_+>k_-$ and let $R_0>0$ be an arbitrary fixed number.
Suppose that $y=(y_1,y_2)\in B^+_{R_0}\cup B^-_{R_0}$ and
$x=\vert x \vert\hat x=\vert x \vert(\cos \theta_{\hat x},\sin\theta_{\hat x} )\in\mathbb{R}^2_-$
with $\theta_{\hat{x}}\in(\pi,2\pi)$, then we have the asymptotic behaviors
\begin{align*}
 G(x,y) &= \frac{e^{ik_{-}\vert x \vert}}{\sqrt{\vert x \vert}}G^{\infty}(\hat x, y) + G_{Res}(x,y),\\
 \nabla_y G(x,y) &=  \frac{e^{ik_{-}\vert x \vert}}{\sqrt{\vert x \vert}}H^{\infty}(\hat x,y ) +  H_{Res}(x,y),
\end{align*}
where $G^{\infty}$ and $H^{\infty}$ are given by (\ref{eq:3.1}) and (\ref{eq:3.2}), respectively, and
$G_{Res}$ and $H_{Res}$ satisfy
\begin{align*}
\vert G_{Res}(x,y)\vert,~ \vert H_{Res}(x,y)\vert \le {C_{R_0}}{\vert x \vert^{-3/2}},\quad \vert x \vert\rightarrow+\infty,
\end{align*}
uniformly for all $\theta_{\hat{x}}\in(\pi,2\pi)$ and $y\in B^+_{R_0}\cup B^-_{R_0}$.
Here, the constant $C_{R_0}>0$ is independent of $x$ and $y$ but dependent of $R_0$.
\end{theorem}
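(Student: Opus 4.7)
The plan is to exploit the reflection $G^*(x,y):=G(x',y')$ introduced just before Theorem \ref{le:3.2}, which is itself a two-layered Green function but with the roles of $k_+$ and $k_-$ interchanged. Under the standing hypothesis $k_+>k_-$, the function $G^*$ thus corresponds to a medium with upper and lower wave numbers $k_+^*=k_-$ and $k_-^*=k_+$, so that $k_+^*<k_-^*$. Moreover, for $x\in\Rb^2_-$ with $\theta_{\hat x}\in(\pi,2\pi)$ the reflected point $x'$ lies in $\Rb^2_+$ with $\vert x'\vert=\vert x\vert$ and $\theta_{\hat{x}'}=2\pi-\theta_{\hat x}\in(0,\pi)$, and the hypotheses of Theorem \ref{NLe:4} apply to $G^*(x',y')$ (the source point $y'$ ranges over $B_{R_0}^-\cup B_{R_0}^+$, since $y\in B_{R_0}^+\cup B_{R_0}^-$ iff $y'\in B_{R_0}^-\cup B_{R_0}^+$).

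First I would invoke Theorem \ref{NLe:4} for $G^*$ to obtain
\begin{align*}
G(x,y)=G^*(x',y')=\frac{e^{ik_-\vert x\vert}}{\sqrt{\vert x\vert}}\,G^{*,\infty}(\hat{x}',y')+G^*_{Res}(x',y'),
\end{align*}
together with the uniform bound $\vert G^*_{Res}(x',y')\vert\le C_{R_0}\vert x\vert^{-3/2}$ valid on all of $\theta_{\hat{x}'}\in(0,\pi)$ and $y'\in B_{R_0}^-\cup B_{R_0}^+$. Because the reflection $y\mapsto y'$ is a bijection between the relevant parameter sets, this immediately yields both the claimed asymptotic expansion for $G(x,y)$ and the stated $O(\vert x\vert^{-3/2})$ uniform estimate on $G_{Res}$. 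A completely analogous argument, combined with the chain rule $\nabla_y G(x,y)=J\,\nabla_{y'}G^*(x',y')$ for $J:=\mathrm{diag}(1,-1)$, yields the corresponding statement for $H_{Res}$ from the $\nabla_{y'}G^*$--part of Theorem \ref{NLe:4}.

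The only real obstacle is purely algebraic bookkeeping: verifying that $G^{*,\infty}(\hat x',y')$ and $J\,H^{*,\infty}(\hat x',y')$ coincide with the explicit $G^\infty(\hat x,y)$ and $H^\infty(\hat x,y)$ prescribed in (\ref{eq:3.1})--(\ref{eq:3.2}). I would carry this out by substituting $\cos\theta_{\hat{x}'}=\cos\theta_{\hat x}$, $\sin\theta_{\hat{x}'}=-\sin\theta_{\hat x}$ and $y'=(y_1,-y_2)$ into (\ref{eq:t7})--(\ref{eq:4.2.18}) with the index $n$ replaced by $n^*=k_-^*/k_+^*=1/n$, and checking directly that the reflection coefficient for $G^*$ satisfies $\mathcal R^*(\theta_{\hat{x}'})=\widetilde{\mathcal R}(\theta_{\hat x})$, whence also $\mathcal T^*(\theta_{\hat{x}'})=\widetilde{\mathcal T}(\theta_{\hat x})$. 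Under this substitution the two branches of (\ref{eq:t7}) parametrised by $y\in\Rb^2_\pm$ swap roles and recover precisely the two branches of (\ref{eq:3.1}). No critical-angle strata appear, because the inequality $k_+^*<k_-^*$ places us squarely in the regime of Theorem \ref{NLe:4}, which is exactly why the uniform bound obtainable here is the sharper $O(\vert x\vert^{-3/2})$ rather than the $O(\vert x\vert^{-3/4})$ occurring in Theorem \ref{NLe:2}.
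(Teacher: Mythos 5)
Your proposal is correct and is essentially the paper's own argument: the paper proves Theorem \ref{le:3.1} precisely by noting that $G^*(x,y)=G(x',y')$ is the two-layered Green function with $k_+$ and $k_-$ interchanged (so that $k_+^*<k_-^*$ when $k_+>k_-$), and then applying Theorem \ref{NLe:4} to $G^*$ at $x'\in\Rb^2_+$ with $\theta_{\hat{x}'}=2\pi-\theta_{\hat x}$, exactly as you do. Your additional remarks (the chain rule via $J=\mathrm{diag}(1,-1)$ and the identification $\mathcal R^*(\theta_{\hat{x}'})=\widetilde{\mathcal R}(\theta_{\hat x})$ matching (\ref{eq:3.1})--(\ref{eq:3.2})) are just the explicit bookkeeping the paper leaves implicit, and they check out.
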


\begin{remark}\label{re4} {\rm
In this remark, we restrict our attention to the case $k_+<k_-$.
Let $y$ be an arbitrary fixed point in $\Rb^2_+\cup\Rb^2_-$ and $G_{Res}(x,y)$ be given as in
Theorem \ref{le:3.2}. It easily follows from Theorem \ref{le:3.2} that $G_{Res}(x,y)$
satisfies (\ref{eq47}) for all $\theta_{\hat{x}}\in (\pi,2\pi)\ba\{\pi+\theta_c,2\pi-\theta_c\}$.
Moreover, combining Remark \ref{re3} and the arguments above Theorem \ref{le:3.2}, we can obtain that
\be\label{eq51}
\lim_{\vert x \vert\rightarrow+\infty}\vert G_{Res}(x,y)\vert\vert x\vert^{3/4}\neq0\quad
\textrm{for}~\theta_{\hat{x}}\in\{\pi+\theta_c,2\pi-\theta_c\},
\en
which directly implies that $G_{Res}(x,y)$ does not satisfy (\ref{eq47})
for $\theta_{\hat{x}}\in\{\pi+\theta_c,2\pi-\theta_c\}$.
Further, it is easily seen from (\ref{eq51}) that the uniform asymptotic estimate of
$G_{Res}(x,y)$ in (\ref{eq52}) is essentially sharp.
Therefore, we call $\pi+\theta_c$ and $2\pi-\theta_c$ the critical angles for the case $k_+<k_-$.
}
\end{remark}

\begin{remark}\label{re5} {\rm
Let $G_{Res}(x,y)$ and $H_{Res}(x,y)$ be given as in Theorems \ref{NLe:4} and \ref{le:3.2}
for the case $k_+<k_-$ and be given as in Theorems \ref{NLe:2} and \ref{le:3.1} for the case $k_+>k_-$.
Then it easily follows from the results in Section \ref{sec:2} and this section that,
for the cases $k_+<k_-$ and $k_+>k_-$, $G_{Res}$ and $H_{Res}$ satisfy the estimates
\begin{align*}
&\left\vert G_{Res}(x,y)\right\vert,~
\vert H_{Res}(x,y)\vert \le{C_{R_0}}{\vert x \vert^{-3/4}},\quad \vert x \vert\rightarrow+\infty,
\end{align*}
uniformly for all $\theta_{\hat{x}}\in(0,\pi)\cup(\pi,2\pi)$ and $y\in B^+_{R_0}\cup B^-_{R_0}$
with arbitrarily fixed $R_0>0$.
}
\end{remark}

\begin{remark}\label{re10}{\rm{\color{hw}
For any source point $y$ lying on the interface $\Gamma_0$,
due to the well-posedness of the scattering problem in a two-layered medium (see \cite{BHY,YLZ}), we can define the two-layered Green function $G(x,y)$ such that
$G(x,y)$ is the unique solution satisfying $G(\cdot,y)-\Phi_0(\cdot,y)\in H^1_{loc}(\mathbb{R}^2)$,
$\Delta_x G(x,y) +\ka^2 G(x,y)=-\delta(x,y)$ in $\mathbb{R}^2$ (in the distributional sense) and the Sommerfeld radiation condition (\ref{eq:0.4}), where
$\Phi_0(\cdot,y)$ denotes the fundamental solution of the Laplace equation
$\Delta w = 0$ in $\Rb^2$ and where
$\ka$ is the wave number defined by
$\ka:=k_+$ in $\mathbb{R}^2_+$ and $\ka:=k_-$ in $\mathbb{R}^2_-$.
Here, $H^{1}_{loc}(\mathbb{R}^2)$
denotes the space of all functions $\phi:\mathbb{R}^2\rightarrow \mathbb{C}$ such that $\phi\in H^{1}(B)$ for all open balls $B\subset\mathbb{R}^2$.
From the expression of the Hankel function $H^{(1)}_0(\cdot)$ given in \cite[Section 3.5]{DK13}
as well as the expression of
$G(x,y)$ given in \eqref{e:1.2} and \eqref{eq:4.1}, it can be seen that for any $y\in\mathbb{R}^2_+\cup\mathbb{R}^2_-$,
$G(x,y)$ also satisfies $G(\cdot,y)-\Phi_0(\cdot,y)\in H^1_{loc}(\mathbb{R}^2)$.
Then by using the well-posedness of the scattering problem in a two-layered medium again, we can apply the elliptic interior $H^2$-regularity (see, e.g., \cite[Section 6.3]{E10}) and
the Sobolev inequality given in \cite[formula (32) in Section 5.6.3]{E10} to deduce that
$G(\cdot,y)\in C(\mathbb{R}^2\ba\{y\})$ for any $y\in\mathbb{R}^2$
and that $G(x,\cdot)\in C(\mathbb{R}^2\ba\{x\})$ for any $x\in\mathbb{R}^2$.
Thus, employing the local regularity estimate in \cite[Theorem 2.7]{CZ98}, we can obtain that $G(\cdot,y)\in C^1(\mathbb{R}^2\ba\{y\})$ for any $y\in\mathbb{R}^2$.
Furthermore, it is easy to see from \eqref{e:1.2} and \eqref{eq:4.1} that  $G(x,y)$ satisfies the reciprocity relation
$G(x,y)=G(y,x)$ for any $x,y\in\mathbb{R}^2_+\cup\mathbb{R}^2_-$ with $x\neq y$
(see also \cite[(2.28)]{Car17}).
Hence, by the above discussions, we have
$G(x,\cdot)\in C^1(\mathbb{R}^2\ba\{x\})$ for any $x\in\mathbb{R}^2$.
On the other hand, it easily follows from a direct calculation that for any $\hat{x}\in\mathbb{S}^1_+\cup\mathbb{S}^1_-$,
$G^{\infty}(\hat x,\cdot)$ (see \eqref{eq:t7} and \eqref{eq:3.1}) and $H^\infty(\hat{x},\cdot)$
(see \eqref{eq:4.2.18} and \eqref{eq:3.2}) can be extended as continuous functions
in $\mathbb{R}^2$, which we denote by $G^{\infty}(\hat x,\cdot)$ and $H^\infty(\hat{x},\cdot)$, respectively, again.
Therefore, it can be seen that Theorems \ref{NLe:4}, \ref{NLe:2}, \ref{le:3.2} and \ref{le:3.1} still hold with
$y\in B^+_{R_0}\cup B^-_{R_0}$ replaced by $y\in \{y\in\Gamma_0:|y|<R_0\}$.}}
\end{remark}

\begin{remark}{\rm{\color{hw-a}
By Lebesgue's theorem, it follows that for any $x\in\mathbb{R}^2_+$,
$G_{\mathcal R}(x,\cdot)$ given in \eqref{eq4} (resp. $G_{\mathcal T}(x,\cdot)$ given in \eqref{eq5})
can be extended as a function in $C^\infty(\ov{\mathbb{R}^2_+})$ (resp.
$C^\infty(\ov{\mathbb{R}^2_-})$).
Similarly, for any $x\in\mathbb{R}^2_-$, $G_{\widetilde{\mathcal{T}}}(x,\cdot)$ given in \eqref{eq61}
(resp. $G_{\widetilde{\mathcal{R}}}(x,\cdot)$ given in \eqref{eq62})
can be extended as a function in $C^\infty(\ov{\mathbb{R}^2_+})$ (resp.
$C^\infty(\ov{\mathbb{R}^2_-})$).
By such extensions, we can employ the continuity of $G(x,y)$ presented in Remark \ref{re10}
as well as \eqref{e:1.2} and \eqref{eq:4.1} to obtain that for any $x\in\mathbb{R}^2_+$, $G(x,y)=\frac{i}{4}H^{(1)}_0(k_{+}\vert x- y\vert ) + G_{\mathcal R}(x,y)=G_{\mathcal T}(x,y)$ on $y\in\Gamma_0$
and that for any $x\in\mathbb{R}^2_-$, $G(x,y)=G_{\widetilde{\mathcal{T}}}(x,y)=\frac{i}{4}H^{(1)}_0(k_{-}\vert x- y\vert ) + G_{\widetilde{\mathcal{R}}}(x,y)$ on $y\in\Gamma_0$.}}
\end{remark}

\section{Uniform far-field asymptotics of the solution to the scattering problem in a two-layered medium}
\label{sec:4}

In this section, as an application of the results in Sections \ref{sec:2} and \ref{sec:3},
we study the uniform far-field asymptotics of the solution to the acoustic scattering
problem by buried obstacles in a two-layered medium with a locally rough interface.
To this end, we introduce some notations.
Let $\Gamma:=\{(x_1,x_2): x_2=h_\Gamma(x_1), x_1\in \Rb \}$
represent a locally rough surface, where $h_\Gamma$ is
a Lipschitz continuous function with compact support in $\Rb$.
Let $\Gamma_p:=\{(x_1,x_2): x_2=h_{\Gamma}(x_1), x_1\in {\rm Supp}(h_{\Gamma}) \}$ denote the
local perturbation of $\Gamma$.
Let $\Omega_{\pm}:=\{(x_1,x_2): x_2\gtrless h_\Gamma(x_1), x_1\in \Rb \}$ denote  the homogenous
media above and below $\Gamma$, respectively. We assume that the scattering obstacle $D$, described
by a bounded domain with $C^2$-boundary $\partial D$ and a connected complement, is fully embedded
in the lower medium $\Omega_-$.
Let $k_\pm={\omega}/{c_\pm}$ be the wave numbers in $\Omega_\pm$, respectively, with $\omega$ being
the wave frequency and $c_\pm$ being the wave speeds in the homogenous media $\Omega_{\pm}$, respectively.
Let $B_R:=\big\{x\in \Rb^2: \vert x\vert < R \big\}$ be a disk with radius $R>0$.
{\color{hw-a}
Let $H^{1}_{loc}(\mathbb{R}^2)$
be defined as in Remark \ref{re10}
and let $H^{1}_{loc}(\mathbb{R}^2\ba\ov{D})$
be the space of all functions $\phi:\mathbb{R}^2\ba\ov{D}\rightarrow \mathbb{C}$ such that $\phi\in H^{1}((\mathbb{R}^2\ba\ov{D})\cap B)$ for all open balls $B$ containing $\ov{D}$.}

Now we describe the considered scattering problem. Consider the time-harmonic ($e^{-i\omega t}$
time dependence) incident plane wave $u^{i}(x,d):=e^{ik_{+}x\cdot d}$ propagating in the direction
$d=(\cos \theta_d, \sin \theta_d)\in\Sp^1_-$ with $\theta_d\in(\pi,2\pi)$.
Then the acoustic scattering problem by buried obstacles in a two-layered medium
is to find the total field $u^{tot}(x,d)=u^{0}(x,d)+u^{s}(x,d)$, which is the sum of the
reference field $u^{0}(x,d)$ and the scattered field $u^{s}(x,d)$.
The reference wave $u^0(x,d)$ is generated by the incident field $u^i(x,d)$ and the two-layered
medium, and is given by (see, e.g., (2.13a) and (2.13b) in \cite{Car17})
\ben
u^0(x,d):=\begin{cases}
\ds u^i(x,d)+ u^r(x,d),\quad & x\in \Rb^2_{+},\\
\ds u^t(x,d), \quad & x\in  \Rb^2_{-},\\
\end{cases}
\enn
where the reflected wave $u^r(x,d)$ and transmitted wave $u^t(x,d)$ are given by
\ben
u^r(x,d):=\mathcal{R}(\pi+\theta_d)e^{ik_{+}x\cdot d^r},\quad
u^t(x,d):=\mathcal{T}(\pi+\theta_d)
e^{ik_{-}x\cdot d^t},
\enn
respectively.
Here,
$d^{r}:=(\cos\theta_d,-\sin\theta_d)$ denotes the reflection direction in $\mathbb{S}^1_+$,
$d^t:=n^{-1}(\cos\theta_d,-i\mathcal{S}(\cos\theta_d,n))$
with $n$ given as in Section \ref{sec:2}, and $\RR(\pi+\theta_d)$ and $\Tc(\pi+\theta_d)$ are
the reflection and transmission coefficients, respectively, with $\RR$ and $\Tc$ given by (\ref{eq:0}).
The definition of $\mathcal{S}(\cdot,\cdot)$ gives that
\begin{align*}
  d^t=\left\{
  \begin{aligned}
    &\left(n^{-1}\cos\theta_d,-\sqrt{1-(n^{-1}\cos\theta_d)^2}\right)&&
    \textrm{if}~n^{-1}\vert \cos\theta_d\vert \leq1,\\
    &\left(n^{-1}\cos\theta_d,-i\sqrt{(n^{-1}\cos\theta_d)^2-1}\right)&&
    \textrm{if}~n^{-1}\vert \cos\theta_d\vert >1.
  \end{aligned}
  \right.
\end{align*}
In particular, if $n^{-1}\vert \cos\theta_d\vert \leq1$, then $d^t=(\cos \theta^t_{d},\sin{\theta^t_{d}})$
is the transmission direction in $\mathbb{S}^1_-$ with $\theta^t_{d}\in[\pi,2\pi]$
satisfying $\cos\theta^t_{d}=n^{-1}\cos\theta_d$.
It is easily seen that {\color{hw-a}for any $d\in\mathbb{S}^1_-$, the reference wave $u^0(x,d)\in H^1_{loc}(\mathbb{R}^2)$} and $u^0(x,d)$ satisfies the Helmholtz equations by
the unperturbed two-layered medium together with the transmission condition on $\Gamma_0$, that is,
\begin{align*}
\qquad\qquad\ds\Delta u^{0} +{k}^2_\pm u^{0}=0&&& \text{in}\quad\Rb^2_\pm,\qquad\qquad \\
\ds[u^{0}]=0,\;\;\left[{\partial u^{0}}/{\partial\nu}\right]=0&& &\text{on}\quad\Gamma_0,
\end{align*}
where $\nu$ denotes the unit normal on $\Gamma_0$ pointing into $\Rb^2_+$ and
$[\cdot]$ denotes the jump across the interface $\Gamma_0$.

When $D$ is a penetrable obstacle, the total field $u^{tot}(x,d)$ and the scattered field $u^s(x,d)$
satisfy the following scattering problem:
\begin{align}
\qquad\qquad\ds\Delta u^{tot} +{k}^2_+u^{tot}&=0&& \text{in}\quad\Omega_+,\qquad\qquad \label{eq:1.2}\\
\qquad\qquad\ds\Delta u^{tot} +{k}^2_- n_D u^{tot}&=0&& \text{in} \quad \Omega_-,\qquad\qquad\\
\ds[u^{tot}]=0,\;\;\left[{\partial u^{tot}}/{\partial\nu}\right]&=0& &\text{on}\quad\Gamma,\\
\ds\lim_{\vert x\vert\rightarrow+\infty}\sqrt{\vert x\vert}\left(\frac{\partial u^s}{\partial \vert x\vert }-ik_\pm u^s\right)&=0 && \textrm{uniformly for all}~ \hat{x} \in \mathbb{S}^1_\pm,\label{eq:1.3}
\end{align}
where $n_D(x)\in L^{\infty}(\Omega_-)$ denotes the refractive index with $\Rt(n_D)>0$, $\I(n_D)\geq 0$
and $\textrm{Supp}(n_D-1)=\ov{D}$, $\nu$ denotes the unit normal on $\Gamma$ pointing into $\Omega_+$,
$[\cdot]$ denotes the jump across the interface $\Gamma$,  and (\ref{eq:1.3}) is the Sommerfeld
radiation condition.

When $D$ is an impenetrable obstacle, the total field $u^{tot}(x,d)$ and the scattered field
$u^s(x,d)$ satisfy the following scattering problem:
\begin{align} \label{e:1.1}
\qquad\qquad\ds\Delta u^{tot} +{k}^2_+ u^{tot}=0&&& \text{in}\quad\Omega_+,\qquad\qquad \\
\qquad\qquad\ds\Delta u^{tot} +{k}^2_- u^{tot}=0&&& \text{in}\quad\Omega_-\ba\ov{D},\qquad\qquad \\
\ds[u^{tot}]=0,\;\;\left[{\partial u^{tot}}/{\partial\nu}\right]=0&& &\text{on}\quad\Gamma,\\
\qquad\qquad \mathscr{B} (u^{tot})=0&&& \text{on}\quad\partial D,\qquad\qquad\\
\label{eq2}
{\ds\lim_{\vert x\vert\rightarrow+\infty}\sqrt{\vert x\vert}\left(\frac{\partial u^s}{\partial\vert x\vert }-ik_\pm u^s\right)=0}&
&& \textrm{uniformly for all}~ \hat{x} \in \mathbb{S}^1_\pm.
\end{align}
Here, $\mathscr{B}$ denotes one of the following three boundary conditions
\begin{align*}\left\{
\begin{aligned}
&\mathscr{B} (u^{tot}):= u^{tot}&&\textrm{on}\;\partial D,\quad &&\textrm{if}\;D\;
\textrm{is a sound-soft obstacle}, \\
&\mathscr{B} (u^{tot}):=\partial u^{tot}/\partial\nu && \textrm{on}\; \partial D, &&\textrm{if}\; D\;
\textrm{is a sound-hard obstacle},  \\
&\mathscr{B} (u^{tot}):=\partial u^{tot}/\partial\nu+i\lambda u^{tot}
&&\textrm{on}\; \partial D, &&\textrm{if}\; D \; \textrm{is an impedance obstacle},
\end{aligned}\right.
\end{align*}
where $\nu$ is the unit outward normal to $\partial D$, and the impedance function $\lambda$
is a real-valued, continuous and nonnegative function. See Figure \ref{sca} for the problem geometry.

%\begin{figure}[htbp]
% \centering
%%\includegraphics[width=4.2in]{fig/irough}
% \input{fig/irough}
% \caption{The acoustic scattering problem by buried obstacles in a two-layered medium}\label{sca}
%\end{figure}
\begin{figure}
\centering
\includegraphics[width=0.7\textwidth]{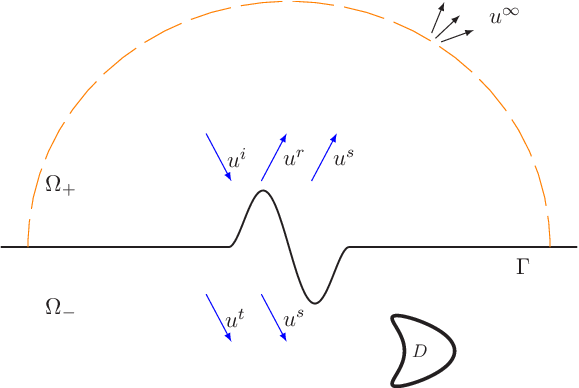}
\caption{The acoustic scattering problem by buried obstacles in a two-layered medium.}\label{sca}
\end{figure}

In the following theorem, we present some useful results for the well-posedness of the scattering
problem (\ref{eq:1.2})--(\ref{eq:1.3}) and the scattering problem (\ref{e:1.1})--(\ref{eq2}),
which are mainly based on \cite{BHY,YLZ}.
{\color{hw-a}We refer to \cite{CH98} for the well-posedness of the electromagnetic scattering problem in a two-layered medium.
Throughout the paper, we assume that the total field $u^{tot}(x,d)$ and the scattered
field $u^s(x,d)$ are given in the sense of Theorem \ref{th:4.1}.}

\begin{theorem}\label{th:4.1}
Let $R>0$ be an arbitrary fixed number
such that $\Gamma_p\cup \overline D \subset B_R$. Then given the reference field $u^0(x,d)$
generated by the incident field
$u^i(x,d)$ with $d\in\Sp^1_-$ and the two-layered medium, the following statements hold.
\begin{enumerate}
  \item \label{se1} {\color{hw-a}For any $d\in\mathbb{S}^1_-$,} there exists a unique solution $u^s(x,d)\in H^1_{\textrm{loc}}(\Rb^2)$ such that
      {\color{hw-a}$u^{tot}(x,d):=u^0(x,d)+u^s(x,d)\in H^1_{\textrm{loc}}(\Rb^2)$} and
      the total field $u^{tot}(x,d)$ and the scattered field $u^s(x,d)$ solve the scattering problem (\ref{eq:1.2})--(\ref{eq:1.3}). Furthermore,
      $\| u^s(\cdot,d)\|_{H^1(B_{R})}$ is uniformly bounded for all $d\in\Sp^1_-$.
  \item \label{se2} {\color{hw-a}For any $d\in\mathbb{S}^1_-$,}
  there exists a unique solution $u^s(x,d)\in H^1_{\textrm{loc}}(\Rb^2\ba\ov{D})$ such that
  {\color{hw-a}$u^{tot}(x,d):=u^0(x,d)+u^s(x,d)\in H^1_{\textrm{loc}}(\Rb^2\ba\ov{D})$} and
  the total field $u^{tot}(x,d)$ and the scattered field $u^s(x,d)$ solve the
  scattering problem (\ref{e:1.1})--(\ref{eq2}). Furthermore,
  $\|u^s(\cdot,d)\|_{H^1(B_{R}\ba\ov{D})}$ is uniformly bounded for all $d\in\Sp^1_-$.
\end{enumerate}
\end{theorem}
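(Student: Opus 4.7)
The plan is to reduce both scattering problems to equivalent boundary value problems posed on bounded domains by employing the two-layered background Green function $G(x,y)$ studied in Sections \ref{sec:2}--\ref{sec:3}, then apply a variational/Fredholm argument to obtain well-posedness, and finally extract the uniform-in-$d$ bound by exploiting the uniform boundedness of the reference field $u^0(\cdot,d)$ on any bounded region.

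First, since $\Gamma_p \cup \ov D \subset B_R$, the scattered field $u^s(x,d)$ satisfies the unperturbed two-layered Helmholtz system with the Sommerfeld radiation condition in $\Rb^2 \ba \ov{B_R}$. This allows me to introduce a Dirichlet-to-Neumann operator $T: H^{1/2}(\pa B_R) \to H^{-1/2}(\pa B_R)$ associated with the flat-interface background, constructed via the layer-potential representation using $G(x,y)$ and its normal derivative on $\pa B_R$. The conditions $u^s \in H^1_{\textrm{loc}}$ and the radiation condition in $\Sp^1_{\pm}$ translate into the nonlocal boundary condition $\pa u^s/\pa \nu = T u^s$ on $\pa B_R$. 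Coupling this with the interior equations and the transmission conditions on $\Gamma \cap B_R$ (and, for statement (\ref{se2}), the boundary condition $\mathscr{B}(u^{tot})=0$ on $\pa D$), I obtain an equivalent problem posed on $B_R$ for (\ref{se1}) and on $B_R \ba \ov D$ for (\ref{se2}), whose right-hand side is generated entirely by $u^0(\cdot,d)$.

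Next, I would write the corresponding sesquilinear form on $H^1(B_R)$ (resp.\ $H^1(B_R \ba \ov D)$ with the appropriate essential or natural encoding of $\mathscr{B}$), decompose it as a Gårding-coercive part plus a compact perturbation, and invoke the Fredholm alternative. Uniqueness of the homogeneous problem is the key analytical step: testing against $\ov{u^s}$ and using the signs $\Ima(n_D)\ge 0$ and $\lambda\ge 0$ together with the transmission conditions on $\Gamma$ gives control on the imaginary part, after which a Rellich-type argument applied to the half-space radiating behaviour outside $B_R$, combined with unique continuation across $\Gamma$, forces $u^s\equiv 0$. These arguments are carried out in \cite{BHY} for the locally rough interface problem and in \cite{YLZ} for the buried-obstacle setting, so I would adapt them directly; the remaining step is then the standard passage from uniqueness plus Fredholm to existence in $H^1_{\textrm{loc}}$.

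The step I expect to require the most attention is the uniform estimate in $d$. The data of the reduced problem are built from $u^0(\cdot,d)$ and its traces on $\pa B_R$, $\Gamma$ and $\pa D$. Since the coefficients $\RR(\pi+\theta_d)$ and $\Tc(\pi+\theta_d)$ depend continuously on $\theta_d \in (\pi,2\pi)$ and are uniformly bounded, and since the exponential factors $e^{ik_+ x\cdot d}$, $e^{ik_+ x\cdot d^r}$, $e^{ik_- x\cdot d^t}$ remain bounded on the compact set $\ov{B_R}$ even in the evanescent regime $n^{-1}|\cos\theta_d|>1$ (where $d^t$ has a nonzero imaginary part but $x_2$ is bounded), the reference field $u^0(\cdot,d)$ and $\nabla u^0(\cdot,d)$ are uniformly bounded on $B_R$ with a constant depending only on $R$, $k_{\pm}$ and $n$. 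Consequently the right-hand side of the reduced variational problem is bounded in the relevant dual norm uniformly in $d \in \Sp^1_-$; since the solution operator of the reduced problem is $d$-independent and bounded by the uniqueness and Fredholm argument above, we conclude $\|u^s(\cdot,d)\|_{H^1(B_R)}$ (resp.\ $\|u^s(\cdot,d)\|_{H^1(B_R\ba\ov D)}$) is uniformly bounded for all $d\in\Sp^1_-$, which completes the proof.
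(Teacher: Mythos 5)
Your proposal is correct and follows essentially the same route as the paper: the paper simply invokes Theorem 2.5 of \cite{BHY} (whose proof is exactly the DtN/variational--Fredholm reduction you sketch) for statement (\ref{se1}), the uniqueness result of \cite{YLZ} together with the same argument for statement (\ref{se2}), and then obtains the uniform bound from the $d$-independence of the solution operator combined with the uniform boundedness of $\|u^0(\cdot,d)\|_{H^1(B_R)}$ over $d\in\Sp^1_-$, which is precisely your final step.
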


\begin{proof}
(\ref{se1}) This statement is a direct consequence of Theorem 2.5 in \cite{BHY} and the facts that
$u^0(\cdot,d)\in H^1_{loc}(\Rb^2)$ and $\| u^0(\cdot,d)\|_{H^1(B_{R})}$ is uniformly bounded
for all $d\in\Sp^1_-$.

(\ref{se2}) The uniqueness of the scattering problem (\ref{e:1.1})--(\ref{eq2}) has been proved in \cite{YLZ}.
We can argue similarly as in the proof of Theorem 2.5 in \cite{BHY} to show that this statement holds.
\end{proof}

Define the function spaces
\ben
C(\ov{\mathbb{S}^1_\pm}):=\{\varphi\in C(\mathbb{S}^1_\pm):\varphi~\textrm{is}~\textrm{uniformly}
~\textrm{continuous}~\textrm{on}~\mathbb{S}^1_\pm\}
\enn
{\color{hw-a}with the norms
$\|\varphi\|_{C(\ov{\mathbb{S}^1_\pm})}:=\sup_{x\in \mathbb{S}^1_\pm}|\varphi(x)|$, respectively,}
and define the function spaces
\ben
C^1(\ov{\Sp^1_\pm}):=\{\varphi\in C^1(\Sp^1_\pm):\varphi~\textrm{and}~
{\rm Grad}\,\varphi~\textrm{are}~\textrm{uniformly}
~\textrm{continuous}~\textrm{on}~\Sp^1_\pm\}
\enn
{\color{hw-a}with the norms
$\|\varphi\|_{C^1(\ov{\mathbb{S}^1_\pm})}:=\sup_{x\in\mathbb{S}^1_\pm}|\varphi(x)|
+\sup_{x\in\mathbb{S}^1_\pm}|\Grad \varphi(x)|$, respectively,}
where ${\rm Grad}$ denotes the surface gradient on $\Sp^1$.
With the help of Theorem \ref{th:4.1} and the uniform far-field asymptotic estimates of $G(x,y)$
presented in Theorems \ref{NLe:4} and \ref{le:3.2}, we have the following theorem for the
uniform far-field asymptotics of the scattered field $u^s(x,d)$ in the case $k_+<k_-$.

\begin{theorem} \label{NLe:3.2.3} Assume that $k_+<k_-$.
Let $R>0$ be large enough
such that $\Gamma_p\cup \overline D \subset B_R$ and
suppose that $x=\vert x\vert \hat x=\vert x\vert (\cos\theta_{\hat x}, \sin\theta_{\hat x})\in\Omega_+\cup\Omega_-$ with $\theta_{\hat{x}}\in(0,\pi)\cup(\pi,2\pi)$ and $\vert x\vert > R$.
For $d \in \Sp^1_{-}$,
let $u^s(x,d)$ be the scattered field for either the scattering problem (\ref{eq:1.2})--(\ref{eq:1.3})
or the scattering problem (\ref{e:1.1})--(\ref{eq2}). Then $u^s(x,d)$ has the asymptotic behavior
\begin{align} \label{Ne13}
u^{s}(x,d)=\left\{
\begin{aligned}
&\frac{e^{ik_{+}\vert x\vert }}{\sqrt{\vert x\vert }}u^{\infty}(\hat x,d)+u^s_{Res}(x,d) &&\textrm{for}~x\in\Omega_+\ba\ov{B_R},\\
&\frac{e^{ik_{-}\vert x\vert }}{\sqrt{\vert x\vert }}u^{\infty}(\hat x,d)+u^s_{Res}(x,d) &&\textrm{for}~x\in\Omega_-\ba\ov{B_R}
\end{aligned}
\right.
\end{align}
with
the far-field pattern  $u^{\infty}(\hat x, d)$ of the scattered field given by
\begin{align}\label{eq29}
u^\infty(\hat x,d)=\int_{\partial{B_R}}\left[{\frac{\partial G^\infty(\hat{x},y)}{\partial\nu(y)}}u^s(y,d)
-\frac{\partial u^s(y,d)}{\partial\nu(y)}G^{\infty}(\hat{x},y)\right]ds(y),\quad
\hat x \in \mathbb S^1_+\cup \mathbb S^1_-,
\end{align}
where $u^{\infty}(\hat x, d)$ satisfies
$u^{\infty}(\cdot, d)\in C^1(\ov{\Sp^1_{+}})$, $u^{\infty}(\cdot, d)\in C(\ov{\Sp^1_{-}})$
and ${\rm Grad}_{\hat x}\,u^{\infty}(\cdot, d)\in L^1(\Sp^1_{-})$ with
\begin{align*}
\| u^{\infty}(\cdot, d)\|_{C^1(\ov{\Sp^1_{+}})},~
\| u^{\infty}(\cdot, d)\|_{C(\ov{\Sp^1_{-}})},~
\| {\rm Grad}_{\hat x}\,u^{\infty}(\cdot, d)\|_{L^1(\Sp^1_{-})} \le C\quad\textrm{for~all}~d \in \Sp^1_{-},
\end{align*}
and $u^s_{Res}(x,d)$ satisfies
\ben
\vert u^s_{Res}(x,d)\vert \le {C}{\vert x\vert^{-3/2}},\quad \vert x\vert \rightarrow+\infty,
\enn
uniformly for all $\theta_{\hat{x}}\in(0,\pi)$ and $d \in \Sp^1_{-}$,
\ben
\left\vert u^s_{Res}(x,d)\right\vert\le{C}{\vert x\vert^{-3/4}},\quad \vert x\vert \rightarrow+\infty,
\enn
uniformly for all $\theta_{\hat{x}}\in(\pi,2\pi)$ and $d \in \Sp^1_{-}$,
\ben
\left\vert u^s_{Res}(x,d)\right\vert\le{C}{{{\left\vert \pi+\theta_c-\theta_{\hat x}
\right\vert}^{-\frac 32}}\vert x\vert^{-\frac 32}},\quad\vert x\vert \rightarrow+\infty,
\enn
uniformly for all $\theta_{\hat{x}}\in(\pi,\pi+\theta_c)\cup\left.\left(\pi+\theta_c, 3\pi/2\right.\right]$
and $d \in \Sp^1_{-}$, and
\ben
\left\vert u^s_{Res}(x,d)\right\vert\le{C}{{{\left\vert 2\pi-\theta_c-\theta_{\hat x}\right\vert}^{-\frac 32}}
\vert x\vert^{-\frac 32}},\quad\vert x\vert \rightarrow+\infty,
\enn
uniformly for all $\theta_{\hat{x}}\in (3\pi/2,2\pi-\theta_c)\cup(2\pi-\theta_c,2\pi)$ and $d\in\Sp^1_{-}$.
Here, $G^\infty(\hat{x},y)$ is defined by (\ref{eq:t7}) and (\ref{eq:3.1}),
and $C>0$ is a constant independent of $x$ and $d$.
\end{theorem}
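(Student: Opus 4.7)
The plan is to reduce the proof to the far-field asymptotics of $G(x,y)$ established in Theorems \ref{NLe:4} and \ref{le:3.2}, using the Green's representation of $u^s$ on $\partial B_R$. First, I would derive
\begin{equation*}
u^s(x,d)=\int_{\partial B_{R'}}\left[\frac{\partial G(x,y)}{\partial\nu(y)}u^s(y,d)-\frac{\partial u^s(y,d)}{\partial\nu(y)}G(x,y)\right]ds(y),\qquad |x|>R',
\end{equation*}
for some $R'\in(R,R+1)$, by applying Green's second identity to $u^s(\cdot,d)$ and $G(x,\cdot)$ in $(\Rb^2_+\cup\Rb^2_-)\setminus\overline{B_{R'}}$. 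Both functions solve the homogeneous Helmholtz equations in $\Rb^2_\pm\setminus\overline{B_{R'}}$ with wave numbers $k_\pm$ and obey the same transmission conditions across $\Gamma_0\setminus\overline{B_{R'}}$ (since $\Gamma_p\cup\overline D\subset B_R\subset B_{R'}$ forces $\Gamma=\Gamma_0$ outside $B_{R'}$), so the boundary contributions along $\Gamma_0\setminus\overline{B_{R'}}$ cancel in pairs, and those at infinity vanish by the Sommerfeld radiation conditions obeyed by both $u^s$ and $G$. Interior elliptic regularity applied in the annulus $B_{R+1}\setminus\overline{B_R}$, together with the uniform bound $\|u^s(\cdot,d)\|_{H^1(B_{R+1})}\le C$ provided by Theorem \ref{th:4.1}, yields $L^2(\partial B_{R'})$ bounds on $u^s(\cdot,d)$ and $\partial_\nu u^s(\cdot,d)$ that are independent of $d\in\Sp^1_-$.

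Next, I would substitute the expansions
\begin{equation*}
G(x,y)=\frac{e^{ik_\pm|x|}}{\sqrt{|x|}}G^\infty(\hat x,y)+G_{Res}(x,y),\quad \nabla_y G(x,y)=\frac{e^{ik_\pm|x|}}{\sqrt{|x|}}H^\infty(\hat x,y)+H_{Res}(x,y)
\end{equation*}
furnished by Theorem \ref{NLe:4} for $x\in\Rb^2_+$ and Theorem \ref{le:3.2} for $x\in\Rb^2_-$, taken with $R_0=R'$, into the representation formula. The leading term immediately produces the asymptotic shape (\ref{Ne13}) with the far-field pattern (\ref{eq29}), while the remainder
\begin{equation*}
u^s_{Res}(x,d)=\int_{\partial B_{R'}}\left[\nu(y)\cdot H_{Res}(x,y)\,u^s(y,d)-G_{Res}(x,y)\,\partial_\nu u^s(y,d)\right]ds(y)
\end{equation*}
is controlled via Cauchy--Schwarz by the pointwise-in-$y$, uniform-in-$\hat x$ estimates of $G_{Res}$ and $H_{Res}$ multiplied by the $d$-independent $L^2(\partial B_{R'})$ trace bounds from the first step. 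Reading off the four angular regimes contained in Theorems \ref{NLe:4} and \ref{le:3.2}, one recovers precisely the four residual estimates claimed: the $|x|^{-3/2}$ bound on $\Sp^1_+$, the global $|x|^{-3/4}$ bound on $\Sp^1_-$, and the sharper $|\pi+\theta_c-\theta_{\hat x}|^{-3/2}|x|^{-3/2}$ and $|2\pi-\theta_c-\theta_{\hat x}|^{-3/2}|x|^{-3/2}$ bounds away from the two critical angles.

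Finally, for the regularity and uniform boundedness of $u^\infty(\cdot,d)$, I would differentiate (\ref{eq29}) under the integral sign, using the explicit expressions (\ref{eq:t7}) and (\ref{eq:3.1}) for $G^\infty(\hat x,y)$. On $\overline{\Sp^1_+}$, since $k_+<k_-$ forces $n>1$, the function $\mathcal S(\cos\theta_{\hat x},n)=-i\sqrt{n^2-\cos^2\theta_{\hat x}}$ is real-analytic, so $\mathcal R$, $\mathcal T$ and the exponential phases entering $G^\infty(\hat x,y)$ depend smoothly on $\hat x$; this yields $u^\infty(\cdot,d)\in C^1(\overline{\Sp^1_+})$ with a bound uniform in $d$. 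On $\Sp^1_-$, however, $\mathcal S(\cos\theta_{\hat x},1/n)$ vanishes like a square root at $\theta_{\hat x}=\pi+\theta_c$ and $\theta_{\hat x}=2\pi-\theta_c$, so $\widetilde{\mathcal R}$ and $\widetilde{\mathcal T}$ in (\ref{eq:3.1}) are only continuous, with derivatives of order $|\theta-\theta_{\rm crit}|^{-1/2}$ and hence in $L^1$. Combined with the uniform-in-$d$ trace estimates of the first step, this gives $u^\infty(\cdot,d)\in C(\overline{\Sp^1_-})$ and $\Grad_{\hat x}u^\infty(\cdot,d)\in L^1(\Sp^1_-)$, both with $d$-uniform norms. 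The main obstacle is the uniform treatment of the angular regions near the critical angles: one must invoke the correct member of the family of estimates in Theorems \ref{NLe:4}--\ref{le:3.2} in each regime, and choose $R'$ so that $\partial B_{R'}$ stays away from $\Gamma_p$ and meets $\Gamma_0$ only transversally, so that no loss of uniformity in $d$ is incurred at the stage of interior regularity.
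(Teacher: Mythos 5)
Your proposal follows essentially the same route as the paper's proof: a Green's representation of $u^s(\cdot,d)$ over a circle enclosing $\Gamma_p\cup\overline{D}$ (justified via the transmission conditions on $\Gamma_0$ and the Sommerfeld radiation conditions), uniform-in-$d$ $L^2$ trace bounds obtained from Theorem \ref{th:4.1} together with elliptic regularity, insertion of the uniform asymptotics of $G$ and $\nabla_y G$ from Theorems \ref{NLe:4} and \ref{le:3.2} to read off (\ref{Ne13}), (\ref{eq29}) and the four residual estimates, and the regularity of $u^\infty(\cdot,d)$ deduced from the explicit formulas (\ref{eq:t7}) and (\ref{eq:3.1}), including the square-root vanishing of $\mathcal S(\cos\theta_{\hat x},1/n)$ at the critical angles which gives only $C(\overline{\Sp^1_-})$ regularity with an $L^1$ surface gradient. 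The only cosmetic deviation is that you integrate over $\partial B_{R'}$ with $R'\in(R,R+1)$ instead of $\partial B_R$ as in (\ref{eq29}); since $u^s(\cdot,d)$ and $G^\infty(\hat x,\cdot)$ both solve the homogeneous two-layered transmission problem in the annulus $B_{R'}\setminus\overline{B_R}$, the two boundary integrals coincide, so this is harmless (the paper works directly with $\partial B_R$, citing \cite{DK13} for the representation formula).
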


\begin{proof}
Since $G(x,y)$ and $u^s(x,d)$ satisfy the Sommerfeld radiation condition (see formulas (\ref{eq:0.4}),
(\ref{eq:1.3}) and (\ref{eq2})), we can use a similar argument as in \cite[Theorem 2.5]{DK13} to obtain that for any $x\in(\Omega_+\cup\Omega_-)\ba\ov{B_{R}}$,
\begin{align} \label{eq:3.2.6}
u^s(x,d)= \int_{\partial{{B_{R}}}}\left[\frac{\partial G(x,y)}{\pa{\nu(y)}} u^s(y,d)
-\frac{\partial u^s(y,d)}{\pa\nu(y)} G(x,y) \right]ds(y),
\end{align}
where $\nu$ denotes the outward unit normal to the boundary $\partial B_{R}$ {\color{hw-a}(see also \cite[(3.11)]{BL16})}. With the aid
of Theorem \ref{th:4.1} and elliptic regularity estimates (see \cite{GT}), it follows that
\begin{align}\label{eq:4.2}
\| u^s(\cdot,d)\|_{L^2(\partial B_R)},~
\| \partial u^s(\cdot,d)/\pa\nu\|_{L^2(\partial B_R)}\le C_R\quad \textrm{for~all}~ d\in\Sp^1_-,
\end{align}
where $C_R>0$ is a constant independent of $d$ but dependent of $R$.
These, together with Theorems \ref{NLe:4} and \ref{le:3.2}, imply that $u^s(x,d)$ has the form (\ref{Ne13}) with $u^\infty(\hat{x},d)$ given by (\ref{eq29}) and with $u^s_{Res}(x,d)$ satisfying all its
properties presented in this theorem.
Further, since $n>1$, it is easily deduced that
\ben
\| \mathcal{S}(\cos(\cdot),n)\|_{C^1[0,\pi]},~
\| \mathcal{S}(\cos(\cdot),1/n)\|_{C[\pi,2\pi]},~
\int^{2\pi}_{\pi}\left\vert \frac{\pa\mathcal{S}(\cos(\theta),1/n)}{\pa\theta}\right\vert d\theta\leq C.
\enn
Thus,
from (\ref{eq:t7}), (\ref{eq:4.2.18}), (\ref{eq:3.1}) and (\ref{eq:3.2}), it is easy to verify
that $G^{\infty}(\cdot, y),H^{\infty}(\cdot, y)\in C^1(\ov{\Sp^1_{+}})$,
$G^{\infty}(\cdot, y),H^{\infty}(\cdot, y)\in C(\ov{\Sp^1_{-}})$
and ${\rm Grad}_{\hat{x}}\,G^{\infty}(\cdot, y),{\rm Grad}_{\hat{x}}
\left[H^{\infty}(\cdot, y)\cdot\nu(y)\right]\in L^1(\Sp^1_{-})$ with
\ben
&&\left\|G^{\infty}(\cdot, y)\right\|_{C^1(\ov{\Sp^1_{+}})},
~\left\|G^{\infty}(\cdot, y)\right\|_{C(\ov{\Sp^1_{-}})},
~\left\| {\rm Grad}_{\hat{x}}\,G^{\infty}(\cdot, y)\right\|_{L^1(\Sp^1_{-})}\leq C,\\
&&\left\|H^{\infty}(\cdot, y)\right\|_{C^1(\ov{\Sp^1_{+}})},
~\left\|H^{\infty}(\cdot, y)\right\|_{C(\ov{\Sp^1_{-}})},
~\left\|{\rm Grad}_{\hat{x}}\left[H^{\infty}(\cdot, y)\cdot\nu(y)\right]\right\|_{L^1(\Sp^1_{-})}\leq C
\enn
for all $y\in \pa{B_{R}}\ba\Gamma_0$. Using this, (\ref{eq29}) and (\ref{eq:4.2}) and noting that
$\nabla_y G^\infty(\hat{x},y)=H^\infty(\hat{x},y)$ for $\hat{x}\in\mathbb{S}^1_+\cup\mathbb{S}^1_-$ and $y\in\Rb^2_+\cup\Rb^2_-$, we obtain that $u^\infty(\hat{x},d)$ satisfies all its properties
presented in this theorem. The proof is thus complete.
\end{proof}

For the case  $k_+>k_-$, we note that (\ref{eq:3.2.6}) also holds
and thus we can employ Theorems \ref{NLe:2}, \ref{le:3.1} and \ref{th:4.1} to obtain the
following uniform far-field asymptotic estimates of $u^s(x,d)$.
The proof is similar to that of Theorem \ref{NLe:3.2.3} and is thus omitted.

\begin{theorem}\label{thm1}
Assume that $k_+ > k_-$.
Let $R>0$ be large enough
such that $\Gamma_p\cup \overline D \subset B_R$ and
suppose that $x=\vert x\vert \hat x=\vert x\vert (\cos\theta_{\hat x},\sin\theta_{\hat x})\in\Omega_+\cup\Omega_-$ with $\theta_{\hat{x}}\in(0,\pi)\cup(\pi,2\pi)$ and $\vert x\vert > R$.
For $d \in \Sp^1_{-}$,
let $u^s(x,d)$ be the scattered field given as in Theorem \ref{NLe:3.2.3}.
Then $u^s(x,d)$ has the asymptotic behavior (\ref{Ne13})
with the far-field pattern $u^\infty(\hat{x},d)$ of the scattered field given by (\ref{eq29}),
where $u^\infty(\hat{x},d)$ satisfies $u^{\infty}(\cdot, d)\in C(\ov{\Sp^1_{+}})$,
${\rm Grad}_{\hat x}\,u^{\infty}(\cdot, d)\in L^1(\Sp^1_{+})$ and
$u^{\infty}(\cdot, d)\in C^1(\ov{\Sp^1_{-}})$ with
\ben
\| u^{\infty}(\cdot,d)\|_{C(\ov{\Sp^1_{+}})},~
  \| {\rm Grad}_{\hat x}\,u^{\infty}(\cdot, d)\|_{L^1(\Sp^1_{+})},~
  \| u^{\infty}(\cdot, d)\|_{C^1(\ov{\Sp^1_{-}})}\le C\quad
  \mbox{for~all}~d\in\Sp^1_{-},
\enn
and $u^s_{Res}(x,d)$ satisfies
\ben
\left\vert u^s_{Res}(x,d)\right\vert\le{C}{\vert x\vert^{-3/4}},\quad \vert x\vert \rightarrow+\infty,
\enn
uniformly for all $\theta_{\hat{x}}\in(0,\pi)$ and $d \in \Sp^1_{-}$,
\ben
\left\vert u^s_{Res}(x,d)\right\vert\le{C}{{{\left\vert \theta_c-\theta_{\hat x}\right\vert}^{-\frac 32}}
\vert x\vert^{-\frac 32}},\quad\vert x\vert \rightarrow+\infty,
\enn
uniformly for all $\theta_{\hat{x}}\in (0,\theta_c)\cup(\theta_c,\pi/2)$ and $d \in \Sp^1_{-}$,
\ben
\left\vert u^s_{Res}(x,d)\right\vert\le{C}{{{\left\vert \pi-\theta_c-\theta_{\hat x}\right\vert}^{-\frac32}}
\vert x\vert^{-\frac32}},\quad\vert x\vert \rightarrow+\infty,
\enn
uniformly for all $\theta_{\hat{x}}\in\left.\left[\pi/2,\pi-\theta_c\right.\right)\cup(\pi-\theta_c,\pi)$
and $d \in \Sp^1_{-}$, and
\ben
\vert u^s_{Res}(x,d)\vert \le {C}{\vert x\vert^{-3/2}},\quad \vert x\vert \rightarrow+\infty,
\enn
uniformly for all $\theta_{\hat{x}}\in(\pi,2\pi)$ and $d \in \Sp^1_{-}$.
Here, $C>0$ is a constant independent of $x$ and $d$.
\end{theorem}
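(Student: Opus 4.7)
The plan is to mirror the proof of Theorem \ref{NLe:3.2.3} step by step, simply replacing the far-field asymptotics of $G(x,y)$ given by Theorems \ref{NLe:4} and \ref{le:3.2} with those given by Theorems \ref{NLe:2} and \ref{le:3.1}, which apply in the present regime $k_+ > k_-$. First I would fix $R>0$ large enough so that $\Gamma_p \cup \overline{D} \subset B_R$. Since $u^s(\cdot,d)$ solves a homogeneous Helmholtz equation in $(\Omega_+ \cup \Omega_-) \backslash \overline{B_R}$ together with the transmission condition on $\Gamma_0 \backslash \overline{B_R}$ and the Sommerfeld radiation condition, and $G(x,\cdot)$ enjoys the same properties, the Green representation argument used to derive (\ref{eq:3.2.6}) applies verbatim. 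Thus
\begin{align*}
u^s(x,d) = \int_{\partial B_R} \left[\frac{\partial G(x,y)}{\partial \nu(y)} u^s(y,d) - \frac{\partial u^s(y,d)}{\partial \nu(y)} G(x,y)\right] ds(y)
\end{align*}
for every $x \in (\Omega_+ \cup \Omega_-) \backslash \overline{B_R}$.

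Next, statement (\ref{se1}) or (\ref{se2}) of Theorem \ref{th:4.1}, together with standard interior elliptic regularity applied in an annular neighborhood of $\partial B_R$ (which avoids both $\Gamma_p$ and $\partial D$ by the choice of $R$), supplies the uniform bound
\begin{align*}
\| u^s(\cdot,d)\|_{L^2(\partial B_R)} + \| \partial u^s(\cdot,d)/\partial \nu\|_{L^2(\partial B_R)} \le C \quad \text{for all}~d \in \mathbb{S}^1_-,
\end{align*}
exactly as in (\ref{eq:4.2}). Inserting the expansions of $G(x,y)$ and $\nabla_y G(x,y)$ from Theorems \ref{NLe:2} (for $x \in \mathbb{R}^2_+$) and \ref{le:3.1} (for $x \in \mathbb{R}^2_-$) into the representation above, and using $\nabla_y G^\infty(\hat x, y) = H^\infty(\hat x, y)$, I recover the leading term $\frac{e^{ik_\pm|x|}}{\sqrt{|x|}} u^\infty(\hat x, d)$ with $u^\infty(\hat x, d)$ defined precisely by (\ref{eq29}). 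The four residual estimates on $u^s_{Res}(x,d)$ then follow from the Cauchy--Schwarz inequality applied to the boundary integral of the residuals $G_{Res}$ and $H_{Res}$: the uniform $|x|^{-3/4}$ bound throughout $\mathbb{S}^1_+$, the sharper $|x|^{-3/2}$ bound throughout $\mathbb{S}^1_-$ (Theorem \ref{le:3.1}), and the two $(|\theta_c - \theta_{\hat x}|^{-3/2}+|\pi-\theta_c-\theta_{\hat x}|^{-3/2})|x|^{-3/2}$ bounds away from the critical angles are inherited directly from (\ref{neq:4.2.29})--(\ref{neq:4.2.31}).

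What remains is the regularity of $\hat x \mapsto u^\infty(\hat x, d)$. In the present case $k_+ > k_-$, the roles of $\mathbb{S}^1_+$ and $\mathbb{S}^1_-$ are exchanged relative to Theorem \ref{NLe:3.2.3}: the critical angles $\theta_c$ and $\pi - \theta_c$ now lie in $\mathbb{S}^1_+$ (where $\mathcal{S}(\cos\theta_{\hat x}, n)$ is only H\"{o}lder across $\theta_{\hat x} \in \{\theta_c, \pi-\theta_c\}$, since $n<1$), while on $\mathbb{S}^1_-$ the function $\mathcal{S}(\cos\theta_{\hat x}, 1/n)$ is $C^1$ up to the closure because $1/n > 1$ keeps the argument bounded away from the branch points. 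From the explicit formulas (\ref{eq:t7}), (\ref{eq:4.2.18}), (\ref{eq:3.1}), (\ref{eq:3.2}) one verifies, for every $y \in \partial B_R \backslash \Gamma_0$,
\begin{align*}
&\| G^\infty(\cdot,y)\|_{C(\overline{\mathbb{S}^1_+})} + \| \mathrm{Grad}_{\hat x}\, G^\infty(\cdot,y)\|_{L^1(\mathbb{S}^1_+)} + \| G^\infty(\cdot,y)\|_{C^1(\overline{\mathbb{S}^1_-})} \le C,\\
&\| H^\infty(\cdot,y)\|_{C(\overline{\mathbb{S}^1_+})} + \| \mathrm{Grad}_{\hat x}[H^\infty(\cdot,y)\cdot\nu(y)]\|_{L^1(\mathbb{S}^1_+)} + \| H^\infty(\cdot,y)\|_{C^1(\overline{\mathbb{S}^1_-})} \le C,
\end{align*}
where the $L^1$ integrability on $\mathbb{S}^1_+$ absorbs the integrable $|\theta_{\hat x}-\theta_c|^{-1/2}$ (respectively $|\pi-\theta_c-\theta_{\hat x}|^{-1/2}$) singularity produced by differentiating $\mathcal{S}(\cos\theta_{\hat x}, n)$. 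Combining these pointwise (in $y$) estimates with (\ref{eq29}) and the uniform $L^2$ bound on the boundary traces of $u^s(\cdot,d)$ yields the stated regularity and uniform bounds on $u^\infty(\cdot,d)$. The main obstacle is precisely this last point: checking that the branch-type singularity of $\mathrm{Grad}_{\hat x}\,G^\infty(\hat x,y)$ near $\theta_c$ and $\pi-\theta_c$ is integrable with a constant independent of $y \in \partial B_R$ and $d \in \mathbb{S}^1_-$, which however follows from a direct inspection of (\ref{eq:0}) and (\ref{eq:t7}).
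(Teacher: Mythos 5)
Your proposal is correct and follows essentially the paper's own route: the paper omits the proof of this theorem precisely because it is the same argument as for Theorem \ref{NLe:3.2.3}, i.e.\ the representation (\ref{eq:3.2.6}) plus the uniform trace bounds from Theorem \ref{th:4.1} and elliptic regularity, with the Green function asymptotics of Theorems \ref{NLe:4} and \ref{le:3.2} replaced by those of Theorems \ref{NLe:2} and \ref{le:3.1}, and with the roles of $\Sp^1_+$ and $\Sp^1_-$ exchanged in the regularity analysis of $G^\infty$ and $H^\infty$ since now $n<1$. Your handling of the integrable $\vert\theta_{\hat x}-\theta_c\vert^{-1/2}$ singularity of ${\rm Grad}_{\hat x}\,G^\infty$ on $\Sp^1_+$ matches what the paper does on $\Sp^1_-$ in the case $k_+<k_-$.
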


\begin{remark}{\rm
Let $u^s_{Res}(x,d)$ be given as in Theorem \ref{NLe:3.2.3} for the case $k_+<k_-$
and be given as in Theorem \ref{thm1} for the case $k_+>k_-$.
Then it easily follows from Theorems \ref{NLe:3.2.3} and \ref{thm1} that,
for both the case $k_+<k_-$ and the case $k_+>k_-$,
$u^s_{Res}(x,d)$ satisfies the estimate
\begin{align*}
&\left\vert u^s_{Res}(x,d)\right\vert \le C{\vert x\vert^{-3/4}},\quad \vert x\vert\rightarrow+\infty,
\end{align*}
uniformly for all $\theta_{\hat{x}}\in(0,\pi)\cup(\pi,2\pi)$ and $d\in\mathbb{S}^1_-$.}
\end{remark}

\section{Conclusion}\label{sec:5}

In this paper, we have established new results for the uniform far-field asymptotics of the two-dimensional
two-layered Green function $G(x,y)$ (together with its derivatives) in the frequency domain.
We note that, to the best of our knowledge, our results are the sharpest yet obtained.
The proofs of our results are based on the steepest descent method.
As an application of our results for $G(x,y)$ and its derivatives,
we have derived the uniform far-field asymptotic behaviors of the scattered field to the acoustic
scattering problem by buried obstacles in a two-layered medium with a locally rough interface.
Further, the results obtained in this paper provide a theoretical foundation for our recent
work \cite{LYZZ2}, where direct imaging methods have been proposed to numerically recover the
locally rough interface from phaseless total-field data or phased far-field data at a fixed frequency.
It is believed that the uniform asymptotic results obtained in this paper will also be useful on its own right.
Moreover, it is interesting to study the uniform far-field asymptotics of the Green
function with the background medium consisting of more than two layers. This is more challenging
and will be considered as a future work.

\section*{Acknowledgement}
This work was partially supported by Beijing Natural Science Foundation Z210001,
NNSF of China grants 11961141007, 61520106004 and 12271515,
Microsoft Research of Asia, and Youth Innovation Promotion Association CAS.

\appendix

\section{Proof of formula (\ref{neq:4.2.33}) in Lemma \ref{NLL1}} \label{sec:a}

In order to prove formula (\ref{neq:4.2.33}) in Lemma \ref{NLL1}, we need the following lemma
given in \cite{BH86}.

\begin{lemma}[see formula (9.4.28) on page 384 in \cite{BH86}]\label{lem1}
Assume $\beta>-1$. Define the function $W^{+}_{\beta}(z):=\int_{\mathcal C_{+}}t^{\beta}e^{-zt-\frac{t^2}{2}}dt$,
where the path $\mathcal C_{+}$ is depicted in Figure \ref{figfl.1}. Then we have
\ben
W^{+}_{\beta}(z)= \sqrt{2\pi}e^{\frac{i\pi}{2}\beta+\frac{z^2}{4}}D_{\beta}(iz).
\enn
\end{lemma}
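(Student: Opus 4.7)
The plan is to prove the identity by showing both sides satisfy the same second order linear ODE in $z$ and then fixing the particular linear combination by a comparison of behavior at one point or at infinity.

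First, I would derive an ODE for $W^+_\beta(z)$. Since $\beta > -1$, the boundary contributions from $\mathcal C_+$ at infinity vanish (the Gaussian factor $e^{-t^2/2}$ dominates any polynomial, while $t^{\beta+1}$ is integrable at the origin), so differentiation under the integral sign combined with one integration by parts applied to the identity $\int_{\mathcal C_+}\frac{d}{dt}(t^{\beta+1} e^{-zt-t^2/2})\,dt = 0$ gives
\ben
(W^+_\beta)''(z) - z (W^+_\beta)'(z) - (\beta+1)\, W^+_\beta(z) = 0.
\enn
A direct substitution using the parabolic cylinder equation $D_\beta''(w) + (\beta + 1/2 - w^2/4) D_\beta(w) = 0$ shows that both $e^{z^2/4} D_\beta(iz)$ and $e^{z^2/4} D_\beta(-iz)$ solve the same ODE, so
\ben
W^+_\beta(z) = A\, e^{z^2/4} D_\beta(iz) + B\, e^{z^2/4} D_\beta(-iz)
\enn
for constants $A,B$ depending only on $\beta$.

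To pin down $A$ and $B$, I would analyze $W^+_\beta(z)$ as $z \to +\infty$ via steepest descent applied to $\mathcal C_+$: the saddle of $-zt - t^2/2$ is at $t=-z$, and deforming $\mathcal C_+$ onto the steepest descent contour there selects precisely one of the two linearly independent ODE solutions, killing one of $A$ or $B$. A cleaner equivalent route is to deform $\mathcal C_+$ directly onto a canonical Hankel type contour used in one of the standard integral representations of the parabolic cylinder function (see, e.g., Whittaker--Watson \S 16.5 or DLMF \S 12.5) and read off the identity, with the prefactor $\sqrt{2\pi}\,e^{i\pi\beta/2}$ coming out of the jump of $t^\beta$ across the branch cut lying inside $\mathcal C_+$, combined with the reflection formula for $\Gamma$.

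The hard part will be the careful bookkeeping of the branch of $t^\beta$ on $\mathcal C_+$, the orientation of the contour, and the resulting phase $e^{i\pi\beta/2}$ together with the $\sqrt{2\pi}$ normalization. The ODE matching reduces the problem to determining a single complex constant, so in practice one only needs to evaluate both sides at a convenient special value (e.g.\ $z = 0$, using $D_\beta(0) = 2^{\beta/2}\sqrt{\pi}/\Gamma((1-\beta)/2)$ and collapsing $\mathcal C_+$ onto its branch cut) or to compare a single leading asymptotic term as $z \to +\infty$ to recover the stated constant.
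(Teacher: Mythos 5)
The paper itself offers no proof of this lemma: it is quoted directly from formula (9.4.28) of \cite{BH86}, so your attempt has to stand on its own. Your ODE step is sound (integrating $\frac{d}{dt}\big(t^{\beta+1}e^{-zt-t^2/2}\big)$ over $\mathcal C_+$ does give $W''-zW'-(\beta+1)W=0$, and $e^{z^2/4}D_\beta(\pm iz)$ solve it), but the determination of the constants — the crux — has a genuine gap. The solution space is two-dimensional, so your claim that ``the ODE matching reduces the problem to determining a single complex constant'' is false, and each of your proposed normalizations (evaluating at $z=0$, or matching one leading term as $z\to+\infty$) supplies only one linear relation between $A$ and $B$. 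Worse, the specific mechanism you lean on — steepest descent as $z\to+\infty$ ``killing one of $A$ or $B$'' — does not work: since $\arg(\pm iz)=\pm\pi/2$ lies inside the sector $\vert\arg w\vert<3\pi/4$ where $D_\beta(w)\sim w^\beta e^{-w^2/4}$, both basis solutions $e^{z^2/4}D_\beta(\pm iz)$ grow like $z^\beta e^{z^2/2}$ in that direction, and the saddle at $t=-z$ (which moreover sits on the branch cut of $t^\beta$, a point needing its own care) yields only the single equation $Ae^{i\pi\beta/2}+Be^{-i\pi\beta/2}=\sqrt{2\pi}\,e^{i\pi\beta}$. The directions where one solution is recessive are $z\to\pm i\infty$, but there the component you must exclude is exponentially small relative to the retained one, so it cannot be read off from ``a single leading asymptotic term'' either; you would need complete expansions or a connection formula. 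Finally, for $\beta\in\{0,1,2,\dots\}$ the functions $D_\beta(iz)$ and $D_\beta(-iz)$ are linearly dependent, so your ansatz does not even span the solution space (fixable by continuity in $\beta$, but unaddressed).

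The ``cleaner equivalent route'' you mention in passing is the one that actually works, and it bypasses the ODE entirely: splitting $\mathcal C_+$ into its ray along the upper side of the cut ($\arg t=\pi$) and the positive real axis gives $W^+_\beta(z)=\int_0^\infty t^\beta e^{-t^2/2}\big(e^{i\pi\beta}e^{zt}+e^{-zt}\big)\,dt=2e^{i\pi\beta/2}\int_0^\infty t^\beta e^{-t^2/2}\cos\!\big(izt-\tfrac{\pi\beta}{2}\big)\,dt$, and the classical representation $D_\beta(w)=\sqrt{2/\pi}\,e^{w^2/4}\int_0^\infty t^\beta e^{-t^2/2}\cos\!\big(wt-\tfrac{\pi\beta}{2}\big)\,dt$, valid precisely for $\beta>-1$, applied with $w=iz$ gives the lemma at once, the phase $e^{i\pi\beta/2}$ arising exactly from $t^\beta$ on the upper lip of the cut. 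Be warned, though, that the loop representations in Section 16.5 of \cite{W_27} carry the integrand exponent $-\nu-1$, so a naive identification there would produce $D_{-\beta-1}$ rather than $D_\beta$ and would still require a connection formula; the ``bookkeeping'' you defer is precisely where the substance of the proof lies.
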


%\begin{figure}[htbp]
% %\vspace{3mm}
% \centering
% \input{fig/plot5}
% %\vspace{3mm}
% \caption{The path $\mathcal C_{+}$}\label{figfl.1}
%\end{figure}
\begin{figure}
\centering
\includegraphics[width=0.6\textwidth]{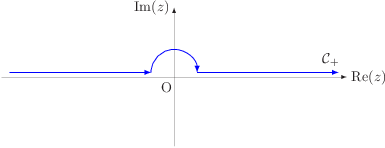}
\caption{The path $\mathcal C_{+}$.}\label{figfl.1}
\end{figure}

We now prove formula (\ref{neq:4.2.33}).

\begin{proof}[Proof of formula (\ref{neq:4.2.33})]
Assume $\I(b)<0$ and let $\alpha= -b$.
Let $\Ima(\rho)>0$ and then we can write $\rho = \vert \rho\vert e^{i\theta_\rho}$ with $\theta_\rho \in (0,\pi)$.
By a change of variable $t=s-b$, we have
\ben
F_2(\rho,b,\beta)= \int_{l_{\mathbb R + \alpha}}t^{\beta}e^{i\rho{(t-\alpha)}^2}dt,
\enn
where $l_{\mathbb R +\alpha}$ denotes the path $\{s+\alpha: s\in \mathbb R\}$ with
the orientation from $-\infty + \alpha $ to $+\infty + \alpha$.

Let the path $l_{\alpha}$ denote the curve $\{(\sqrt{2\rho})^{-1}e^{{i\pi}/4}(s+\alpha):s\in\Rb\}$
with the orientation from $(\sqrt{2\rho})^{-1}e^{{i\pi}/4}(-\infty + \alpha)$ to
$(\sqrt{2\rho})^{-1}e^{{i\pi}/4}(+\infty + \alpha)$
(see Figure \ref{fig1}).
Here, we note that $(\sqrt{2\rho})^{-1}e^{{i\pi}/4}=(2\vert\rho\vert)^{-1/2}e^{i(\pi/4-\theta_\rho/2)}$,
where $\pi/4-\theta_\rho/2\in(0,\pi/4)$ for the case $\Rt(\rho)>0$,
$\pi/4-\theta_\rho/2\in(-\pi/4,0)$ for the case $\Rt(\rho)<0$
and
$\pi/4-\theta_\rho/2=0$ for the case $\Rt(\rho)=0$.
For the case $\Rt(\rho)>0$, let the paths $l^{\pm }_{\alpha}$ denote the curves
$\left\{ z\in l_{\alpha}: \pm \Ima(z)> 0\right\}$, respectively, with the same orientations
as $l_{\alpha}$ (see Figure \ref{fig1}(a)). We now claim that
\begin{empheq}[left={F_2(\rho,b,\beta)= \empheqlbrace}]{align}
&\int_{l_{\alpha}}t^{\beta}e^{i\rho{(t-\alpha)}^2}dt,  & \textrm{if}~\Rt(\rho)\le 0, \label{eq19}\\
&\int_{ l^+_{\alpha}}t^{\beta}e^{i\rho{(t-\alpha)}^2}dt
+e^{i2\pi\beta}\int_{l^-_{\alpha}}t^{\beta}e^{i\rho{(t-\alpha)}^2}dt, & \textrm{if}~\Rt(\rho)>0, \label{eq20}
\end{empheq}
We only prove (\ref{eq20}) since the proof of (\ref{eq19}) is similar and easier.
In what follows, we assume that $\Rt(\rho)>0$. Let $p_{\alpha}\in\mathbb{R}$ denote
the intersection point of $l_{\alpha}$ and real axis, and let the path $\Rb_{p_\alpha}$ denote the
curve $\{z\in \Rb: z < p_{\alpha}\}$ with the orientation from $-\infty$ to $p_{\alpha}$.
Moreover, define
\ben
&&I_1:=\{z\in\Cb:\I(z)>\I(\alpha),~\I(\sqrt{2\rho}e^{-i\pi/4}z)<\I(\alpha)\},\\
&&I_2:=\{z\in\Cb:0<\I(z)<\I(\alpha),~\I(\sqrt{2\rho}e^{-i\pi/4}z)>\I(\alpha)\},
\enn
and let $I_3$ denote the domain enclosed by
$l^-_{\alpha}$ and $\Rb_{p_{\alpha}}$ (see Figure \ref{fig1}(a)). For $t \in \Rb_{p_{\alpha}}$, define $t^{\beta}_{\pm}:=\lim_{\epsilon\in\mathbb{R},\epsilon\rightarrow +0}(t\pm i\epsilon)^\beta$.
It is clear that $t^{\beta}_{\pm}$ exist and
\be\label{eq21}
t^{\beta}_{+} = e^{i2\pi\beta}t^{\beta}_{-}\quad \textrm{for}~ t\in \Rb_{p_{\alpha}},
\en
since $p_{\alpha}<0$ due to $\Rt(\rho)>0$ and $\I(\alpha)>0$. Further, for any $r\in\Rb$
it is easy to verify that $\vert t\vert^r\cdot\big\vert e^{i\rho{(t-\alpha)}^2}\big\vert\rightarrow 0$ as
$\vert t\vert\rightarrow+\infty$ uniformly for all $t\in \ov{I_1}\cup \ov{I_2}\cup \ov{I_3}$.
This, together with (\ref{eq21}) and Cauchy integral theorem, gives that
\begin{align*}
F_2(\rho,b,\beta)&= \int_{l^+_{\alpha}}t^{\beta}e^{i\rho{(t-\alpha)}^2}dt + \int_{\Rb_{p_\alpha}}t_{+}^{\beta}e^{i\rho{(t-\alpha)}^2}dt\\
&= \int_{l^+_{\alpha}}t^{\beta}e^{i\rho{(t-\alpha)}^2}dt + e^{i 2\pi\beta}\int_{\Rb_{p_\alpha}}t_{-}^{\beta}e^{i\rho{(t-\alpha)}^2}dt\\
&=\int_{ l^+_{\alpha}}t^{\beta}e^{i\rho{(t-\alpha)}^2}dt
 +e^{i2\pi\beta} \int_{l^-_{\alpha}}t^{\beta}e^{i\rho{(t-\alpha)}^2}dt.
\end{align*}
Thus we obtain (\ref{eq20}).

%\begin{figure}[htbp]
% %\vspace{3mm}
% \centering
% \input{fig/plot6}
% %\vspace{3mm}
% \caption{The paths $l_{\Rb+\alpha}$, $l_{\alpha}$, $l^\pm_{\alpha}$, $\Rb_{p_{\alpha}}$,
% and the domains $I_1$, $I_2$, $I_3$.
% For the case $\Rt(\rho) > 0$, $l_{\alpha}$ is divided into $l^+_{\alpha}$ (blue line) and
% $l^-_{\alpha}$ (red line), and $p_{\alpha}$ denotes the intersection point of $l_{\alpha}$
% and real axis.}\label{fig1}
%\end{figure}
\begin{figure}
\centering
\includegraphics[width=0.9\textwidth]{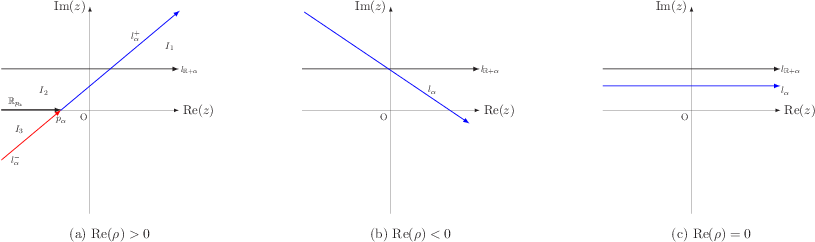}
\caption{The paths $l_{\Rb+\alpha}$, $l_{\alpha}$, $l^\pm_{\alpha}$, $\Rb_{p_{\alpha}}$,
 and the domains $I_1$, $I_2$, $I_3$.
 For the case $\Rt(\rho) > 0$, $l_{\alpha}$ is divided into $l^+_{\alpha}$ (blue line) and
 $l^-_{\alpha}$ (red line), and $p_{\alpha}$ denotes the intersection point of $l_{\alpha}$
 and real axis.}\label{fig1}
\end{figure}

On the other hand, for any $\eta\in\mathbb{C}_0$, we write $\eta=\vert\eta\vert e^{i\theta_\eta}$
with $\theta_\eta\in(-\pi,\pi)$. It is easily seen that ${\theta_\rho}/{2}- {{\pi}/4}\in(-\pi/4,\pi/4)$ and
\ben
\begin{cases}
\theta_\rho\in\left[\left.\frac{\pi}{2},\pi\right)\right.,~\theta_\eta\in(0,\pi),
~\frac{\pi}{4}+\theta_\eta-\frac{\theta_\rho}{2}\in(-\frac{\pi}{4},\pi),
&\mbox{if $\Rt(\rho)\le 0$, $\eta\in l_{\Rb+\alpha}$},\\
\theta_\rho\in(0,\frac\pi2),~\theta_\eta\in(0,\pi),~\frac{\pi}{4}+\theta_\eta-\frac{\theta_\rho}{2}\in(0,\pi),
&\mbox{if $\Rt(\rho)>0$, ${e^{{i\pi}/{4}}\eta}/{\sqrt{2\rho}}\in l^+_{\alpha}$},\\
\theta_\rho\in(0,\frac\pi2),~\theta_\eta\in(0,\pi),~\frac{\pi}{4}+\theta_\eta-\frac{\theta_\rho}{2}\in(\pi,2\pi),
&\mbox{if $\Rt(\rho)>0$, ${e^{{i\pi}/{4}}\eta}/{\sqrt{2\rho}}\in l^-_{\alpha}$}.
\end{cases}
\enn
Hence, it follows from the definition of $(\cdot)^\beta$ that
$\left(\sqrt{2\rho} e^{-i\pi/4}\right)^\beta =(2\rho)^{\beta/2}e^{-i\beta\pi/4}$ and
\ben
\begin{cases}
\left(\frac{1}{\sqrt{2\rho}e^{{-i\pi}/4}}\eta\right)^\beta
=\left(\frac{1}{\sqrt{2\rho}e^{{-i\pi}/4}}\right)^\beta \eta^\beta , &\mbox{if $\Rt(\rho)\le 0$,
 $\eta\in l_{\Rb +\alpha}$}, \\
\left(\frac{1}{\sqrt{2\rho}e^{{-i\pi}/4}}\eta\right)^\beta
= \left(\frac{1}{\sqrt{2\rho}e^{{-i\pi}/4}}\right)^\beta \eta^\beta,
& \mbox{if $\Rt(\rho)>0$, ${e^{{i\pi}/{4}}\eta}/{\sqrt{2\rho}}\in l^+_{\alpha}$},\\
\left(\frac{1}{\sqrt{2\rho}e^{{-i\pi}/4}}\eta\right)^\beta
= e^{-i2\pi \beta}\left(\frac{1}{\sqrt{2\rho}e^{{-i\pi}/4}}\right)^\beta \eta^\beta,
& \mbox{if $\Rt(\rho)>0$, ${e^{{i\pi}/{4}}\eta}/{\sqrt{2\rho}}\in l^-_{\alpha}$.}
\end{cases}
\enn
Thus, inserting $t = t(z):= (\sqrt{2\rho})^{-1}e^{\frac{i\pi}4}z$ into \eqref{eq19}
and (\ref{eq20}), we have
\ben
F_2(\rho,b,\beta)=e^{i\rho b^2}\int_{l_{\mathbb R+\alpha}}{\left(\frac{1}{2\rho}\right)}^{\frac{\beta+1}2}
e^{\frac{i\pi(\beta+1)}4}z^{\beta}e^{-\frac{1}{2}z^2-\gamma z}dz
\enn
for both the case $\Rt(\rho)\leq 0$ and the case $\Rt(\rho)>0$,
where $\gamma:= -\sqrt{2\rho} e^{\frac{-i\pi}4}\alpha$ and we use the fact that
$i\rho{(t-\alpha)}^2- i\rho{\alpha}^2= -\frac{1}{2}z^2-\gamma z$.
Note that $l_{\mathbb{R}+\alpha}$ lies in the upper-half complex plane
due to $\Ima(b)<0$,
and $\vert z^{\beta}e^{-\frac{1}{2}z^2-\gamma z}\vert\rightarrow 0$
as $\vert z\vert\rightarrow+\infty$ uniformly for all $z\in\{z\in\mathbb{C}:0\leq\I(z)\leq\I(\alpha)\}$.
Thus using Cauchy integral theorem, we deduce that
\ben
F_2(\rho,b,\beta)= e^{i\rho b^2}\int_{\mathcal C_{+}}{\left(\frac{1}{2\rho}\right)}^{\frac{\beta+1}2}
e^{\frac{i\pi(\beta+1)}4}z^{\beta}e^{-\frac{1}{2}z^2-\gamma z}dz.
\enn
This, together with Lemma \ref{lem1}, implies that (\ref{neq:4.2.33}) holds.
\end{proof}

\section{Proofs of Lemmas \ref{Nle:1} and \ref{Le:3.2.1}}\label{s3}

\subsection{Proof of Lemma \ref{Nle:1}}\label{sec:b1}

\begin{proof}
Let $\Rmnum{1}:=\left\{s\in L_{\mathsf w_0}:\vert\I(s)\vert<\Rt(s)\right\}$,
$\Rmnum{2}:=\left\{s\in L_{\mathsf w_0}: \I(s)>\vert\Rt(s)\vert\right\}$,
$\Rmnum{3}: =\left\{s\in L_{\mathsf w_0}: \vert\I(s)\vert< -\Rt(s)\right\}$,
$\Rmnum{4}: =\left\{s\in L_{\mathsf w_0}:\I(s)<-\vert\Rt(s)\vert\right\}$,
$Q_1:=\{s\in\Cb:\I(s)=\Rt(s)\}$, and $Q_2:=\{s\in\Cb:\I(s)= -\Rt(s)\}$.

For any $\mathsf w>0$, by a straightforward calculation, it is easy to verify that the function $h(s):=1-{s^2}/{(2k_{+}i)}=1-\Rt(s)\I(s)/k_+ + i(\vert\Rt(s)\vert^2-\vert\I(s)\vert^2)/(2k_+)$ maps the domain
$L_{\mathsf w}$  into the  domain  $\mathcal P_{\mathsf w}$ defined by
$$
\mathcal{P}_{\mathsf w}:=\left\{z\in \Cb:\frac{k_{+}^2(\Rt(z)-1)^2}{\mathsf w^2}< 2k_{+}\I(z)
+ \mathsf w^2 \right\}.
$$
It is easily seen that $\mathcal{P}_{\sqrt{k_+}}\cap \{s\in \Cb: \I(s)=0,\; \Rt(s)<0\}=\emptyset$.
Hence, it follows that $P(s)$ is an analytic function in $L_{\sqrt{k_+}}$ and satisfies
\be\label{eq18}
\Rt{(P(s))}>0\quad\textrm{for}~s\in L_{\sqrt{k_+}}.
\en

First, we consider the statement (\ref{se3}). Let $a_2\geq 0$.
It follows from (\ref{eq18}) that $\Rt(f(s))=a_0+a_1\Rt(P(s))+a_2(\Rt(s)+\I(s))/(2\sqrt{k_+})>0$
for $s \in\Rmnum{1}\cap \Rmnum{2}$.
Further, it follows from the definition of $h(s)$ that $\I (h(s))>0$ for
$s\in \Rmnum{3} $ and $\I(h(s))<0$ for $s\in \Rmnum{4}$, which implies that $\I{(P(s))}>0$
for $s\in\Rmnum{3}$ and $ \I{(P(s) )}<0$ for $s\in \Rmnum{4}$. This, together with the fact
that $\I(s)-\Rt(s)> 0$ for $s\in \Rmnum{3}$ and $\I(s)-\Rt(s)<0$ for $s\in \Rmnum{4}$, gives $\I{(f(s))}=a_1\I(P(s))+a_2(\I(s)-\Rt(s))/(2\sqrt{k_+})\ne0$ for $s \in\Rmnum{3}\cup \Rmnum{4}$.
Moreover, for $ s\in Q_1 \cap  L_{-\mathsf w_0, \sqrt{k_{+}}}$, we have
$$
f(s)=a_0+a_1P(s)+ a_2\frac{se^{-i\frac{\pi}4}}{\sqrt{2k_{+}}}
= a_0+a_1\sqrt{1- \frac{(\I(s))^2}{k_{+}}} + \frac{a_2}{\sqrt{k_{+}}}\I(s) > 0.
$$
For $s\in ( L_{-\mathsf w_0, \sqrt{k_{+}}} \cap  Q_2)\backslash \{0\}$, we obtain
from (\ref{eq18}) that $\Rt(f(s))= a_0 + a_1 \Rt(P(s))> 0$. Therefore, from the above arguments,
we obtain that statement (\ref{se3}) holds.

Secondly, we consider the statement (\ref{se4}). Let $a_2<0$. By similar arguments as in the proof of statement (\ref{se3}), we can deduce that $\I(f(s))>0$ for $s \in\Rmnum{1}$, $\I(f(s))<0$ for $s \in \Rmnum{2}$,
$\Rt(f(s))>0$ for $s \in\Rmnum{3}\cup \Rmnum{4}$, $f(s)>0$ for $s\in L_{-\sqrt{k_{+}},\mathsf w_0}\cap Q_1$
and $\Rt(f(s))> 0$ for $s\in ( L_{-\sqrt{k_{+}}, \mathsf w_0}\cap Q_2)\backslash \{0\}$.
Thus it easily follows that statement (\ref{se4}) holds.
\end{proof}

\subsection{Proof of Lemma \ref{Le:3.2.1}}\label{sec:b2}

\begin{proof}
From the analyticity of $A(z)$ and $B(z)$, there exists a disk $B_{\epsilon}:=\{z\in\mathbb{C}:\vert z\vert<\epsilon\}$
such that: for $z\in B_{\epsilon}$,
\ben
A(z)=\sum^{+\infty}_{j=0} A_j z^j,\quad B(z)=\sum^{+\infty}_{j=0}B_j z^j,\quad
A^2(z)=\sum^{+\infty}_{j=0} A^{(2)}_j z^j,\quad B^2(z) =  \sum^{+\infty}_{j=0} B^{(2)}_j z^j.
\enn
Since $A^2(z)=B^2(z)$ in $L_{\mathsf w}$, it is easily seen that
\be\label{eq17}
A^{(2)}_m=\sum^{m}_{j=0}A_jA_{m-j} =B^{(2)}_m= \sum^{m}_{j=0}B_j B_{m-j},~m\geq 0.
\en
Due to the fact that $A_0=A(0)= B_0=B(0)\neq 0$,  we can apply (\ref{eq17}) with $m=1$ to obtain that
$A_1=B_1$.
Then, by repeating the same
argument, we can apply (\ref{eq17}) with $m=2,3,...$ to obtain that $A_j=B_j$ for $j =0,1,2,\ldots$.
This implies that $A(z)=B(z)$ in $B_\epsilon$. Therefore, the statement of this lemma follows from
the analyticity of $A(z)$ and $B(z)$ in $L_{\mathsf w}$.
\end{proof}

%\bibliographystyle{siamplain}
%\bibliography{references}

\end{document}